\newtheorem{theorem}{Theorem}[section]
\newtheorem{lemma}[theorem]{Lemma}
\newtheorem{cor}[theorem]{Corollary}
\newtheorem{conj}[theorem]{Conjecture}
\newtheorem{claim}[theorem]{Claim}
\theoremstyle{definition}
\newtheorem{defn}[theorem]{Definition}
\newenvironment{poc}{\begin{proof}[Proof of claim]}{\end{proof}}
\newcommand{\bN}{\ensuremath{\mathbb{N}}}
\newcommand{\ie}{i.e.\ }
\newcommand*{\abs}[1]{\lvert#1\rvert}
\newcommand*{\ceil}[1]{\lceil#1\rceil}
\title{Ramsey numbers of cycles versus general graphs}
\author{John Haslegrave\thanks{Mathematical Institute, Andrew Wiles Building, University of Oxford, Oxford OX2 6GG, UK (\texttt{j.haslegrave@cantab.net}). Supported
by the UK Research and Innovation Future Leaders Fellowship MR/S016325/1.}
\and Joseph Hyde\thanks{Mathematics and Statistics, University of Victoria,, Victoria, B.C., V8W 2Y2, Canada (\texttt{josephhyde@uvic.ca}). Supported
by the UK Research and Innovation Future Leaders Fellowship MR/S016325/1 and ERC Advanced Grant 101020255.}
\and Jaehoon Kim \thanks{Department of Mathematical Sciences, KAIST, South Korea (\texttt{jaehoon.kim@kaist.ac.kr}). Supported by the POSCO Science Fellowship of POSCO TJ Park Foundation and by the KAIX Challenge program of KAIST Advanced Institute for Science-X.}
\and Hong Liu
\thanks{Extremal Combinatorics and Probability Group (ECOPRO), Institute for Basic Science (IBS), Daejeon, South Korea (\texttt{hongliu@ibs.re.kr}). Supported by the Institute for Basic Science (IBS-R029-C4) and the UK Research and Innovation Future Leaders Fellowship MR/S016325/1.}
}
\begin{document}
\maketitle

\begin{abstract}
The Ramsey number $R(F,H)$ is the minimum number $N$ such that any $N$-vertex graph either contains a copy of $F$ or its complement contains $H$. Burr in 1981 proved a pleasingly general result that for any graph $H$, provided $n$ is sufficiently large, a natural lower bound construction gives the correct Ramsey number involving cycles: $R(C_n,H)=(n-1)(\chi(H)-1)+\sigma(H)$, where $\sigma(H)$ is the minimum possible size of a colour class in a $\chi(H)$-colouring of $H$. Allen, Brightwell and Skokan conjectured that the same should be true already when $n\geq \abs{H}\chi(H)$. 

We improve this 40-year-old result of Burr by giving quantitative bounds of the form $n\geq C\abs{H}\log^4\chi(H)$, which is optimal up to the logarithmic factor. In particular, this proves a strengthening of the Allen-Brightwell-Skokan conjecture for all graphs $H$ with large chromatic number. 
\end{abstract}

\section{Introduction}

For any pair of graphs $F$ and $H$, Ramsey~\cite{R30} famously proved in 1930 that  there exists a number $R(F,H)$ such that given any graph $G$ on at least $R(F,H)$ vertices either $F \subseteq G$ or $H \subseteq \overline{G}$. Determining $R(F,H)$ exactly for every pair of graphs $F,H$ is a notoriously difficult problem in combinatorics. 
Indeed, $R(K_n, K_n)$ is not even known exactly for $n = 5$.
However, for some pairs of graphs, particularly when $F$ and/or $H$ are certain sparse graphs, $R(F,H)$ is known. We require some notation.
For a graph $H$, define the chromatic number $\chi(H)$ of $H$ to be the smallest number of colours in a proper colouring of $H$, that is, a colouring where no two adjacent vertices have the same colour.
Further, let $\sigma(H)$ be the minimum possible size of a colour class in a $\chi(H)$-colouring of $H$. 

Building on observations of Erd\H{o}s~\cite{E47} and Chv\'{a}tal and Harary~\cite{CH72}, Burr~\cite{B81} constructed the following lower bound for $R(F,H)$ when $F$ is a connected graph with $\abs{F} \geq \sigma(H)$: 
\begin{equation}\label{lem:burr}
R(F,H) \geq (\abs{F} - 1)(\chi(H) - 1) + \sigma(H).
\end{equation}
The construction proving this bound is the graph $G$ on $(\abs{F} - 1)(\chi(H) - 1) + \sigma(H) - 1$ vertices consisting of $\chi(H) - 1$ disjoint cliques of size $\abs{F}-1$ and an additional disjoint clique of size $\sigma(H) - 1$. 
Clearly, since $F$ is connected, $F \nsubseteq G$. 
Since $\overline{G}$ is the complete $\chi(H)$-partite graph with $\chi(H)-1$ vertex sets of size $\abs{F} - 1$ and one vertex set of size $\sigma(H) - 1$, and the vertex set of size $\sigma(H) - 1$ cannot completely contain any colour class of $H$, we have $H \nsubseteq \overline{G}$. In what follows we will often say a graph $G$ is \emph{$H$-free} to mean $H \nsubseteq G$.

Although the bound in~\eqref{lem:burr} is very general, for some pairs of graphs it is extremely far from the truth.
Indeed, Erd\H{o}s \cite{E47} proved that $R(K_k, K_k) \geq \Omega(2^{k/2})$, whereas \eqref{lem:burr} only gives the much smaller lower bound of $(k-1)^2 + 1$. 
For some pairs of graphs however the bound in~\eqref{lem:burr} is tight. For graphs $F$ and $H$ satisfying this lower bound 
$R(F,H)=(\chi(H)-1)(\abs{F}-1)+\sigma(H)$, Burr and Erd\H{o}s~\cite{BE83} coined the expression `$F$ is \emph{$H$-good}'. The study of so-called \emph{Ramsey goodness}, after being initiated by Burr and Erd\H{o}s in 1983~\cite{BE83}, has attracted considerable interest.
Prior to Burr's observation~\cite{B81}, already Erd\H{o}s~\cite{E47} had proved that the path on $n$ vertices $P_n$ was $K_k$-good; Gerensc\'{e}r and Gy\'{a}rf\'{a}s~\cite{GG67} proved that for $n \geq k$ the path $P_n$ is $P_k$-good; and Chv\'{a}tal~\cite{C77} proved every tree $T$ is $K_k$-good. The result of Chvatal can be in fact viewed as a generalisation of Tur\'{a}n's theorem, which states that the complete balanced $(k-1)$-partite graph on $N$ vertices, the so-called \emph{Tur\'{a}n graph}, has the maximum number of edges amongst $K_k$-free graphs.
Indeed, Tur\'{a}n's theorem is equivalent to the statement `$S_n$ is $K_k$-good', where $S_n$ is the $n$-vertex star with $n-1$ leaves. To see this, let $N = (n-1)(k-1)$ and $T$ be the complement of this Tur\'{a}n graph. Then $T$ is precisely the construction in~\eqref{lem:burr} when $(F,H) = (S_n, K_k)$. Moreover, $T$ is the unique graph on $(n-1)(k-1)$ vertices without $S_n \subseteq T$ or $K_k \subseteq \overline{T}$. Add a vertex to $T$ and denote the resulting graph by $T + v$. If $v$ neighbours any vertex in $T$, then $S_n \subseteq T + v$. Otherwise, $K_k \subseteq \overline{T + v}$. That is, $R(S_n, K_k) = (n-1)(k-1) + \sigma(K_k) = (n-1)(k-1) + 1$. This connection to Tur\'an's theorem highlights how Ramsey goodness results can generalise other results in graph theory. 
See \cite{FHW, LL21, M21, PS17, PS20} and their references for more recent progress in the area of Ramsey goodness, as well as the survey of Conlon, Fox and Sudakov~\cite[Section 2.5]{CFS15}.

\smallskip

In this paper we are specifically interested in when $C_n$, the $n$-vertex cycle, is $H$-good for general graphs $H$.
This study can be traced back to Bondy and Erd\H{o}s~\cite{BE73} who proved that $C_n$ is $K_k$-good whenever $n \geq k^2-2$, which led Erd\H{o}s, Faudree, Rousseau and Schelp~\cite{EFRS78} to conjecture that $C_n$ is $K_k$-good whenever $n \geq k \geq 3$.
Keevash, Long and Skokan~\cite{KLS21} recently proved a strengthening of this conjecture for large $k$, showing that $n\ge C\frac{\log k}{\log\log k}$ suffices for some constant $C \geq 1$.
For smaller $k$, Nikiforov~\cite{N09} proved $n \geq 4k+2$ is sufficient and several authors have proved the conjecture for certain small values of $k$ (see \cite{CCZ08} and its references).

For Ramsey numbers of cycles versus general graphs $H$, Burr~\cite{B81}, in 1981,
proved a satisfying result that $C_n$ is $H$-good when $n$ is sufficiently large as a function of $\abs{H}$. It remains an intriguing open question to determine the threshold of $n$ below which the Ramsey number $R(C_n,H)$ behaves differently from the natural construction of Burr yielding~\eqref{lem:burr}. In particular, Allen, Brightwell and Skokan \cite{ABS} conjectured the following explicit bound.

\begin{conj}\label{conj:abs}For any graph $H$ and $n\geq\chi(H)\abs{H}$, the cycle $C_n$ is $H$-good, \ie $R(C_n, H) = (\chi(H)-1)(n-1)+\sigma(H)$.
\end{conj}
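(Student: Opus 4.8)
\medskip
\noindent\textbf{Proof proposal.}
Write $\chi=\chi(H)$, $\sigma=\sigma(H)$ and $h=\abs{H}$, and set $N=(\chi-1)(n-1)+\sigma$; by~\eqref{lem:burr} it suffices to show that every $C_n$-free graph $G$ on $N$ vertices satisfies $H\subseteq\overline G$. The first step is to reduce to complete multipartite host graphs: fix a proper $\chi$-colouring of $H$ witnessing $\sigma$, with class sizes $c_1\le\dots\le c_\chi$ (so $c_1=\sigma$ and $\sum_ic_i=h$). Then $H\subseteq K_{c_1,\dots,c_\chi}$, and since every $c_i\le h$ this is in turn a subgraph of $K_{\sigma,h,\dots,h}$, the complete $\chi$-partite graph with one part of size $\sigma$ and $\chi-1$ parts of size $h$. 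Thus it is enough to locate a copy of $K_{\sigma,h,\dots,h}$ in $\overline G$ --- equivalently, pairwise non-adjacent sets $A_1,\dots,A_\chi\subseteq V(G)$ with $\abs{A_1}=\sigma$ and $\abs{A_2}=\dots=\abs{A_\chi}=h$, or, equivalently again, an induced subgraph $G[S]$ whose components can be grouped into $\chi$ parts of total sizes $\sigma,h,\dots,h$ (surplus vertices within a part may then be deleted, which keeps the parts mutually non-adjacent).

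Two easy reductions narrow the field. First, as $h\ge\chi$, the hypothesis $n\ge C\abs{H}\log^4\chi$ lies well inside the range in which $C_n$ is $K_\chi$-good (by Keevash, Long and Skokan for large $\chi$, and by Bondy--Erd\H{o}s and Nikiforov for small $\chi$); since $N\ge R(C_n,K_\chi)$, a $C_n$-free $G$ on $N$ vertices automatically has $\alpha(G)\ge\chi$, so the $\chi$ vertices that are to be grown into $A_1,\dots,A_\chi$ already exist. Second, if $\alpha(G)\ge h$ then $K_h\subseteq\overline G$ and we are done, so we may assume $\alpha(G)<h$; then $G$ has a path on at least $N/(h-1)\gg n$ vertices (Gallai--Milgram) and every induced subgraph of $G$ has at most $h-1$ components. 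In particular $G$ cannot contain a large sparse, long-cycle-like, or near-bipartite piece, as any of these would force $\alpha(G)\ge h$.

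The core is a stability description of $C_n$-free graphs with small independence number. I would show that if $G$ is $C_n$-free, $\alpha(G)<h$ and $n\gg h\log^4\chi$, then either $C_n\subseteq G$ (a contradiction) or, after deleting boundedly many vertices and edges, $G$ is a vertex-disjoint union of cliques each of order at most $n-1$ --- and then, since $N>(\chi-1)(n-1)$, there are at least $\chi$ such cliques. The mechanism I have in mind: repeatedly delete vertices of degree below a threshold $t\approx\chi n/h$; if this terminates at the empty graph then $G$ is $t$-degenerate, whence $\alpha(G)\ge N/(t+1)\ge h$, a contradiction. Otherwise we reach a subgraph of minimum degree $\ge t$; invoking the extremal/stability theory of $C_n$-free graphs (a dense, robustly connected $C_n$-free graph is close to a blow-up of a clique of order $<n$, unless it is near-bipartite and hence has large independence number, which is excluded here), together with a P\'osa rotation--extension argument to convert a long cycle into one of length \emph{exactly} $n$, one obtains either $C_n\subseteq G$ or a component of order $<n$ that is almost a clique; peeling it off and iterating yields the claimed decomposition. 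Finally, given a union of $\ge\chi$ cliques of order $<n$, I would extract $A_1,\dots,A_\chi$ by distributing the cliques greedily among $\chi$ groups with targets $\sigma,h,\dots,h$ (merging small cliques and trimming each group down to its target), where the ``$+\,\sigma$'' slack in $N$ is exactly what allows the last group to be completed --- precisely as in Burr's extremal construction.

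The main obstacle is this quantitative structure theorem for $C_n$-free graphs, and in particular certifying a cycle of length \emph{exactly} $n$ --- rather than merely a long cycle or long path --- under the weak hypothesis $n\gg\abs{H}\log^4\chi$ in place of $n\gg\abs{H}$: this demands careful control of the error terms through the degeneracy peeling, the passage to a robustly connected subgraph, and the rotation--extension adjustments, which is where the polylogarithmic loss in $\chi$ is incurred; and it demands doing the final clique-bundling tightly enough to reach exactly $N=(\chi-1)(n-1)+\sigma$, matching the lower bound in~\eqref{lem:burr}.
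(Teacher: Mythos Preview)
The statement you are trying to prove is a \emph{conjecture}: the paper does not prove it, and it remains open in general. What the paper actually proves is Theorem~\ref{thm:main}, which has the different hypothesis $n\geq C\abs{H}\log^4\chi(H)$; this implies Conjecture~\ref{conj:abs} only when $\chi(H)$ is large enough that $\chi(H)>C\log^4\chi(H)$. Your proposal seems to conflate the two statements (you invoke the bound $n\ge C\abs{H}\log^4\chi$ midway through while nominally addressing the conjecture), so there is no ``paper's own proof'' to compare against here.

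Setting that aside and reading your proposal as an attempt at Theorem~\ref{thm:main}, there is a genuine gap at the structural step. You assert that a $C_n$-free graph with $\alpha(G)<h$ must, after deleting a few vertices, be a disjoint union of near-cliques of order below $n$. This is not a known theorem and is not what the paper establishes: the stability result (Lemma~\ref{lem:stability}) shows only that the pieces $A_i$ are well-connected and have $K_{m_1,m_2}$-free complement, not that they are close to cliques. Your proposed mechanism --- degeneracy peeling to minimum degree roughly $\chi n/h$, then appealing to ``extremal/stability theory of $C_n$-free graphs'' --- does not yield this: a dense $C_n$-free graph need not be near a clique blow-up, and the near-bipartite alternative you dismiss does not exhaust the possibilities.

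The second gap is the exact-length step. You write that ``P\'osa rotation--extension'' will convert a long cycle into one of length exactly $n$, but this is precisely the heart of the problem and where the paper does all its real work. The paper's approach is quite different from rotation--extension: it builds \emph{adjusters} (chains of short odd cycles, Definition~\ref{defn:adjuster}) inside sublinear expanders found via Lemma~\ref{lem:expansion}, merges many of them using Lemmas~\ref{lem:merge-gadgets} and~\ref{lem:merge-efficient} to accumulate enough length-flexibility, and then trims using $H$-freeness of the complement. The $\log^4\chi$ loss comes from the diameter bounds in this expansion machinery (Lemma~\ref{lem:short-path}), not from anything like degeneracy peeling. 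Your outline does not supply a substitute for this mechanism, so as written it does not constitute a proof of either the conjecture or the theorem.
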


Note that we may assume $H$ to be a complete multipartite graph, as every $H$ is a subgraph of some complete $\chi(H)$-partite graph $H'$ with $\sigma(H')=\sigma(H)$, and clearly $R(C_n, H)\leq R(C_n,H')$.

Towards Conjecture~\ref{conj:abs}, Pokrovskiy and Sudakov \cite{PS20} very recently proved an important case when the graph $H$ has polynomially small chromatic number: $\abs{H}\ge \chi(H)^{23}$. More precisely, they showed that for $n\geq 10^{60} m_k$ and $m_1 \leq m_2 \leq\cdots\leq m_k$ satisfying $m_i\geq i^{22}$ 
for each $i$, $R(C_n ,K_{m_1,\ldots,m_k} ) = (n - 1)(k - 1) + m_1$. Note, however, that it is well-known from random graph theory (e.g.~\cite{Bol88}) that for almost all graphs $H$, the chromatic number is much larger: $\chi(H)=\Theta\big(\frac{\abs{H}}{\log\abs{H}}\big)$.

Our main result below is an almost optimal quantitative version of Burr's result~\cite{B81}. In particular, this theorem holds for all graphs $H$. Moreover, as $x> C\log^4 x$ for all large $x$, it proves a strengthening of Conjecture~\ref{conj:abs} for all graphs $H$ with large (constant) chromatic number.

\begin{theorem}\label{thm:main}
There exists a constant $C > 0$ such that for any graph $H$ and any $n\geq C\abs{H}\log^4\chi(H)$ we have that $C_n$ is $H$-good, \ie $R(C_n, H) = (\chi(H)-1)(n-1)+\sigma(H)$.
\end{theorem}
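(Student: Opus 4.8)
The plan is to establish the upper bound $R(C_n,H)\le N+1$, where $N=(k-1)(n-1)+\sigma$ with $k=\chi(H)$ and $\sigma=\sigma(H)$; the matching lower bound is exactly Burr's construction~\eqref{lem:burr} with $F=C_n$, which is legitimate since $C_n$ is connected and $n\ge\sigma$. By the reduction noted after Conjecture~\ref{conj:abs} we may assume $H=K_{m_1,\dots,m_k}$ with $\sigma=m_1\le\dots\le m_k$ and $\sum_i m_i=h:=\abs{H}$, so $\sigma\le h/k$ and $h\le n/(C\log^4 k)$. Let $G$ be a graph on $N$ vertices with $C_n\not\subseteq G$; it suffices to produce pairwise $G$-anticomplete sets $V_1,\dots,V_k$ with $\abs{V_j}=m_j$, since these span a copy of $H$ in $\overline G$. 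Finally we may assume $\alpha(G)\le h-1$, as an independent set of size $h$ induces $K_h\supseteq H$ in $\overline G$.

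I would argue by induction on $k$, peeling off one part at a time. Examine the components of $G$. If $k$ of them have size $\ge m_k$, put $V_j$ in the $j$-th such component and stop. If the components of size $<m_k$ contain at least $h+k\sigma$ vertices in total, sort them by size and greedily pack $k$ bins of capacities $m_1,\dots,m_k$, each bin overshooting by $<\sigma$ so that $h+k\sigma\le 2h$ vertices suffice; trimming each bin to its exact size yields the $V_j$ (distinct bins lie in distinct components, hence are anticomplete) and we stop. Otherwise $G$ has at most $k-1$ components of size $\ge m_k$ carrying all but $\le 2h$ of the vertices, so some component $D$ has $\abs{D}\ge(N-2h)/(k-1)\ge n-1-2h\ge m_k$; if $\abs{D}\le n-1$ we simply delete $D$ (it is already anticomplete to $G-D$), place $V_k$ inside it, and recurse on $G-D$ with $K_{m_1,\dots,m_{k-1}}$ in place of $H$ --- this graph has $\ge(k-2)(n-1)+\sigma$ vertices and still contains no $C_n$, so induction finishes the job, the final residue of $\ge\sigma$ vertices furnishing $V_1$. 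Thus we are reduced to the case that $D$ is a connected $C_n$-free graph on $\ge n$ vertices.

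The technical heart is a lemma handling exactly this case: a connected $C_n$-free graph $D$ on $\ge n$ vertices admits a vertex set $S$ with $\abs{S}\le n/\operatorname{polylog}(k)$ such that $D-S$ has a component $W$ with $m_k\le\abs{W}\le n-1-\abs{S}$ (allowing us to delete $W\cup S$, place $V_k$ in $W$, and recurse as above --- $W$ is anticomplete to everything used later since $S$ separates it in $D$ and $D$ is a component of $G$), OR all components of $D-S$ have size $<m_k\le h$ (in which case $D-S$, with its $\ge n-1-\abs{S}\gg 2h$ vertices, is bin-packed directly into the remaining bins). One proves this by splitting on whether $D$ has a cycle of length $\ge n$. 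If it does not, the Erd\H{o}s--Gallai bound $e(D)\le\tfrac12(n-2)(\abs{D}-1)$ together with the Kopylov structure theorem (in stable form) shows $D$ is built from a bounded number of dense ``cores'' of size $<n$ joined along small cuts, which yields $S$ after possibly recursing into a core that is itself too large. If $D$ does contain a cycle of length $\ge n$, a P\'osa-type rotation--extension argument shows that if $D$ is in addition expanding at scale $n/\operatorname{polylog}(k)$ --- every set of $\le n/\operatorname{polylog}(k)$ vertices having a strictly larger external neighbourhood, after first passing to a subgraph of large minimum degree --- then $D$ has cycles of every length in a window around $n$, in particular the forbidden $C_n$; and if the expansion fails, the offending set is essentially the cut $S$ we want.

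The main obstacle is this cycle-finding step: extracting a cycle of length \emph{exactly} $n$ from an expander --- not merely some long cycle --- and doing so assuming expansion only at the coarse scale $n/\operatorname{polylog}(\chi(H))$. Crucially, we cannot fall back on $\alpha(G)$ being small (it need not be, since $\overline G$ is only $H$-free, not $K_{\chi(H)}$-free), which is what forces a genuine expander argument rather than the softer density arguments available for $H=K_k$. Optimising the number of rotations and the scale at which the argument can be pushed through is precisely what determines the bound $n\ge C\abs{H}\log^4\chi(H)$, the four logarithms compounding the polylogarithmic loss of the cycle lemma with the up-to-$(\chi(H)-1)$ rounds of peeling (each of which may trigger an internal recursion into ever-smaller cores). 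A secondary but essential difficulty is that $N=(\chi(H)-1)(n-1)+\sigma(H)$ leaves \emph{no} additive slack, so one must use stability versions of Erd\H{o}s--Gallai and Kopylov --- pinning down not just how many edges but how exactly a $C_n$-free graph concentrates onto a few size-$<n$ cores glued along small separators --- in order for the vertex bookkeeping (at most $(\chi(H)-1)(n-1)$ vertices peeled, residue $\ge\sigma(H)$) to close exactly.
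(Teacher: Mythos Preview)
Your inductive framework—peeling off one colour class per round and reducing to a structural lemma about large connected $C_n$-free pieces—is reasonable and in fact parallels the paper's stability lemma (Lemma~\ref{lem:stability}), which likewise either reduces $k$ by one (its case~\eqref{stability-i}) or decomposes $G$ into $k-1$ well-behaved pieces of size near $n$ (its case~\eqref{stability-ii}). The bookkeeping for your recursion (arranging $\lvert W\cup S\rvert\le n-1$ so that the residue stays at least $(k-2)(n-1)+\sigma$, and using that $W$ is anticomplete to everything outside $W\cup S$) also checks out.

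The genuine gap is in the ``technical heart'' lemma, and specifically its second case. You assert that if $D$ contains a cycle of length at least $n$ and is expanding at scale $n/\mathrm{polylog}(k)$, then a P\'osa rotation--extension argument yields cycles of every length in a window around $n$. This is not what rotation--extension delivers: that technique produces \emph{long} cycles (or Hamiltonicity), not cycles of a \emph{prescribed intermediate} length. Converting a cycle of length $\ge n$ into one of length exactly $n$ under only sublinear expansion is precisely the hardest step of the whole problem, and the paper devotes Sections~\ref{sec:expansion} and~\ref{sec:gadgets} to it, using the adjuster machinery of Liu and Montgomery~\cite{LM20+}: one repeatedly finds short odd cycles inside suitably chosen sublinear expanders, links them into an $r$-adjuster with $r\ge mk$ short cycles, merges this with a long $0$-adjuster, and the resulting structure contains routes of $r+1$ consecutive lengths—enough to hit $n$ exactly after a crude shortening of the adjuster-free section using the $H$-freeness of $\overline G$. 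No known variant of P\'osa's lemma accomplishes this, and the pancyclicity-type results that do exist require much stronger (linear) expansion than you have here. Your first case is also underspecified: the ``Kopylov structure theorem in stable form'' you invoke is not an existing result—Kopylov's theorem bounds the number of edges in $2$-connected graphs of bounded circumference but does not supply the core-plus-small-cut decomposition you describe—so that branch would need its own substantial argument as well.
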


The bound on $n$ in Theorem~\ref{thm:main} is best possible up to the logarithmic factor $\log^4\chi(H)$. The following construction shows that the cycle length $n$ has to be at least $(1-o(1))\abs{H}$ in order to be $H$-good. 

\smallskip

\noindent\emph{Lower bound construction.}
Fix arbitrary $m,k\in\mathbb{N}$ with $k\ge 2$ and $0<\varepsilon<\frac{1}{4}$. Set $n=(1-\varepsilon)mk$. Consider the complete $k$-partite graph $H$ on partite set $V_1,\ldots,V_k$ with $\abs{V_1}=(1-\varepsilon)mk$ and $\abs{V_2}=\cdots=\abs{V_k}=\frac{\varepsilon mk}{k-1}$; so $\abs{H}=mk$ and $\sigma(H)=\frac{\varepsilon mk}{k-1}$. Let $G$ be the $N$-vertex graph, with $N=k(n-1)$, consisting of $k$ vertex disjoint cliques $K_{n-1}$. It is easy to see that $G$ is $C_n$-free with $H$-free complement. Therefore, for these choices of $C_n$ and $H$, we have 
\[R(C_n,H)>k(n-1)>(\chi(H)-1)(n-1)+\sigma(H).\]

Our proof takes a similar approach to the work of Pokrovskiy and Sudakov \cite{PS20} and Keevash, Long and Skokan~\cite{KLS21}. The bulk of the work is to prove a stability type result showing that $C_n$-free graphs $G$ with $H$-free complement whose order is around the lower bound in~\eqref{lem:burr} must be structurally close to Burr's construction. The two novel ingredients at the heart of our proof are (i) certain sublinear expansion properties and (ii) an `adjuster' structure, both of which are inspired by recent work of Liu and Montgomery \cite{LM20+} on cycle embeddings in sublinear expanders. 
The theory of sublinear expanders has played a pivotal role in the resolutions of several old conjectures, see e.g.~\cite{FKKL,H-K-L,KLShS17,LM1,LM20+}.

The logarithmic factor in our bound is an artifact of the use of sublinear expansion. Considering the above construction, it is not inconceivable that already when $n\ge (1+o(1))\abs{H}$, the cycle $C_n$ is $H$-good. It would be interesting to at least get rid of the logarithmic factor and obtain a bound linear in $\abs{H}$.

\smallskip

\noindent\textbf{Organisation.} The rest of the paper is organised as follows. We give an outline in Section~\ref{sec:outline}. Preliminaries are given in Section~\ref{sec:prelim}. The two main ingredients, the sublinear expansion and the adjuster structure (and related lemmas) are given in Sections~\ref{sec:expansion} and~\ref{sec:gadgets}, respectively. The stability result and the proof of the main theorem are in Section~\ref{sec:stability}.

\section{Outline}\label{sec:outline}

Suppose that $G$ has order at least $(\chi(H)-1)(n-1)+\sigma(H)$, but $\overline G$ is $H$-free. With the given condition on $n$, it is not too hard to find a cycle of length \emph{at least $n$} in $G$; the difficulty lies in obtaining a cycle of the \emph{precise} desired length $n$. A natural approach to deal with this is to create a sufficiently large structure which has inbuilt flexibility about the length of cycles it can produce. 
Pokrovskiy and Sudakov \cite{PS20} use expansion properties to create some complex gadgets, which can be joined together to produce structures capable of producing paths of a wide range of lengths, up to almost all of their total size. However, the connectivity properties needed to link up these structures require $n$ to be very large compared to $\chi(H)$, and in particular only produce a good bound on $n$ if $\abs{H}$ is at least a large power of $\chi(H)$.

To deal with the missing regime where $\abs{H}$ is bounded by a polynomial in $\chi(H)$, we use an orthogonal approach. Instead of the complex gadgets featured in \cite{PS20}, we borrow ideas from recent work of Liu and Montgomery \cite{LM20+} to use certain sublinear expansion properties (Definition \ref{def:expand}) to find simpler gadgets which we call adjusters (Definition~\ref{defn:adjuster}). However, these adjusters are less flexible in terms of how much we can vary the length of the final cycle, relative to its total length. We circumvent this potential difficulty by creating a cycle which has enough adjusters to permit slightly more than $\abs{H}$ different lengths, and also has a long adjuster-free section. The complement being $H$-free means that we may shorten this section until the cycle is close enough to the desired length, and use the flexibility from the adjusters to deliver the final blow.

One of the main technical difficulties in our proof is showing that a suitable expanding subgraph may be found in a graph with $H$-free complement under our definition of sublinear expansion. Additionally, we need to work directly with the specific graph $H$, rather than passing to a balanced multipartite graph, and the possibility that the parts of $H$ are very unbalanced creates additional difficulties when one tries to use induction and it is necessary to reduce the number of parts first. We prove this, together with some useful consequences of expansion, in Section \ref{sec:expansion}.

We then leverage these expansion properties to find the adjusters that we need in Section \ref{sec:gadgets}, and show how these may be combined together to give a suitable long cycle in a sufficiently well-connected subgraph. As a consequence of sublinear expansion and the fact that we do not need as much flexibility in length adjustment, we require only a very weak connectivity condition.

Finally, in Section \ref{sec:stability} we establish a stability result on graphs $G$ which have close to $(\chi(H)-1)(n-1)$ vertices, no copy of $C_n$, no copy of $H$ in the complement, and which cannot be reduced to a smaller example by removing a part from $H$. That is, we show that a small number of vertices may be removed from such a graph $G$ to leave $\chi(H)-1$ reasonably well-connected subgraphs of order close to $n$, whose complements exclude a complete bipartite graph $H'$; here our low connectivity requirements will obviate the need for a lower bound on the class sizes of $H$. The Ramsey goodness of the cycle then follows quickly from this stability result by considering how $2$-connected blocks in $G$ are linked.

\section{Preliminaries}\label{sec:prelim}

Our aim is to use induction on the number of partite sets $k$. We therefore define how we order possible graphs $H$. 
For a complete multipartite graph $H$, we write $H'\sqsubset H$ if there exists a graph $H''$ such that $H\cong H'\vee H''$, where $\vee$ denotes graph join, \ie~$H$ is obtained by taking disjoint copies of $H'$ and $H''$ and then adding all edges between $V(H')$ and $V(H'')$.
Informally, $H'$ consists of the subgraph induced by a proper subcollection of the parts of $H$. We will sometimes use $H'\sqsubseteq H$ to mean ``$H'\sqsubset H$ or $H'=H$'', \ie $H'$ is induced by a (not necessarily proper) subcollection of parts of $H$.
For the main step of the induction, we will require both $H'\sqsubset H$ and $\sigma(H')=\sigma(H)$, 
but in some intermediate steps this latter condition is not needed. For a graph $H$ with $\chi(H)=k$, we write $m=m(H)=\frac{\abs{H}}{k}$ for the average part size of $H$, \ie $\abs{H}=km$.

For a graph $G$ and vertex set $A\subset V(G)$, we define the \textit{external neighbourhood} $N_G(A)$ to be the set $\{w\in V(G)\setminus A:vw\in E(G)\}$; note that this is disjoint from $A$. We omit the subscript if the graph is clear from context. The subgraph induced on $A$ will be denoted by $G[A]$. We write $G-A=G[V(G)\setminus A]$ for the subgraph obtained by removing vertices in $A$. For disjoint subsets $A,B\subset V(G)$, an $A$-$B$ path is a path between a vertex of $A$ and a vertex of $B$. For two vertices $u,v\in V(G)$, the graph distance $\operatorname{dist}_G(u,v)$ is the length of a shortest $u$-$v$ path in $G$. 

Logarithms with no specified base are always taken to the base $e$ throughout. For an integer $t$, we write $[t]$ for the set $\{1,2,\ldots,t\}$.

We will need the following result of Erd\H{o}s and Szekeres \cite{ES35}.
\begin{theorem}\label{thm:ES}Any sequence of at least $(r-1)^2+1$ integers contains a monotonic subsequence of length $r$.
\end{theorem}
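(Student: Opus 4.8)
The plan is to argue by contradiction via the pigeonhole principle applied to a pair of ``longest monotone run ending here'' statistics — the classical argument of Erdős and Szekeres.

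First I would fix notation: write the sequence as $a_1,a_2,\ldots,a_N$ with $N\geq(r-1)^2+1$, and for each index $i\in[N]$ let $x_i$ be the maximum length of a non-decreasing subsequence whose final term is $a_i$, and $y_i$ the maximum length of a non-increasing subsequence whose final term is $a_i$. Suppose, for contradiction, that the sequence contains no monotonic subsequence of length $r$. Then every $x_i$ and every $y_i$ lies in $\{1,2,\ldots,r-1\}$, so the pair $(x_i,y_i)$ ranges over the $(r-1)\times(r-1)$ grid.

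The key step is to show the map $i\mapsto(x_i,y_i)$ is injective. Given $i<j$, compare $a_i$ and $a_j$: if $a_i\leq a_j$, then any non-decreasing subsequence ending at $a_i$ extends by appending $a_j$, giving $x_j\geq x_i+1$; if $a_i\geq a_j$, then symmetrically $y_j\geq y_i+1$. At least one of these two cases holds, so in all cases $(x_i,y_i)\neq(x_j,y_j)$, proving injectivity. Consequently $N\leq\abs{[r-1]\times[r-1]}=(r-1)^2$, contradicting $N\geq(r-1)^2+1$; hence a monotonic subsequence of length $r$ exists.

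There is no substantial obstacle in this argument — it is short and self-contained. The only point that deserves a moment's care is the handling of equal terms: by phrasing the dichotomy with the \emph{non-strict} relations ``$a_i\leq a_j$ or $a_i\geq a_j$'' the two cases are exhaustive, and each case still produces a \emph{strict} increase in the relevant statistic, which is exactly what injectivity needs.
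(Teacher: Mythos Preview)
Your argument is correct and is precisely the classical Erd\H{o}s--Szekeres pigeonhole proof. Note that the paper does not supply its own proof of this statement: it simply quotes the result with a citation to \cite{ES35}, so there is no ``paper's proof'' to compare against. Your write-up would serve perfectly well as a self-contained justification, and your remark about handling ties via non-strict inequalities is exactly the right observation to make the injectivity step watertight.
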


We use the following results of Pokrovskiy and Sudakov \cite{PS20}. 
\begin{cor}\label{PS-cor}For $n\geq 10^{60} m_k$ and $m_1 \leq m_2 \leq\cdots\leq m_k$ satisfying $m_k\geq k^{22}$, 
we have $R(C_n ,K_{m_1,\ldots,m_k} ) = (n - 1)(k - 1) + m_1$.\end{cor}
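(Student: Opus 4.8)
The plan is to derive both inequalities from results already in hand. The lower bound is immediate: since $C_n$ is connected with $n$ vertices, $\chi(K_{m_1,\ldots,m_k})=k$, $\sigma(K_{m_1,\ldots,m_k})=m_1$, and $n\geq 10^{60}m_k\geq m_1$, the bound~\eqref{lem:burr} of Burr applied with $F=C_n$ and $H=K_{m_1,\ldots,m_k}$ gives $R(C_n,K_{m_1,\ldots,m_k})\geq (n-1)(k-1)+m_1$. So it remains to prove the matching upper bound; here the case $k\geq 2$ is the only one of interest, and we assume it.

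For the upper bound, the idea is to enlarge the smaller parts of $H$ just enough to meet the hypothesis of the Pokrovskiy--Sudakov theorem quoted in the introduction (namely $m_i\geq i^{22}$ for each $i$ and $n\geq 10^{60}m_k$), while changing neither the smallest part nor the largest. Concretely, I would set $m_i':=\max\{m_i,\,i^{22}\}$ for each $i\in[k]$. First one checks that $m_1'\leq m_2'\leq\cdots\leq m_k'$: for each $i$ both $m_i\leq m_{i+1}$ and $i^{22}\leq (i+1)^{22}$, so $m_i'\leq m_{i+1}'$. Next, since $k\geq 2$ forces $m_1\geq 1=1^{22}$ we get $m_1'=m_1$, and the hypothesis $m_k\geq k^{22}$ gives $m_k'=m_k$, so in particular $n\geq 10^{60}m_k=10^{60}m_k'$. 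Finally $m_i'\geq i^{22}$ for every $i$ by construction, and $\chi(K_{m_1',\ldots,m_k'})=k$ with $\sigma(K_{m_1',\ldots,m_k'})=m_1'=m_1$. Hence the Pokrovskiy--Sudakov theorem~\cite{PS20} applies to $K_{m_1',\ldots,m_k'}$ and yields $R(C_n,K_{m_1',\ldots,m_k'})=(n-1)(k-1)+m_1$.

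To conclude I would use that $R(C_n,\cdot)$ is monotone under passing to subgraphs of the second argument: since $m_i\leq m_i'$ for all $i$ we have $K_{m_1,\ldots,m_k}\subseteq K_{m_1',\ldots,m_k'}$, so any graph whose complement contains $K_{m_1',\ldots,m_k'}$ also has complement containing $K_{m_1,\ldots,m_k}$; therefore $R(C_n,K_{m_1,\ldots,m_k})\leq R(C_n,K_{m_1',\ldots,m_k'})=(n-1)(k-1)+m_1$. Combined with the lower bound this gives the claimed equality. There is no serious obstacle in this argument; the only point needing (brief) care is checking that the enlarged sequence $(m_i')$ is still non-decreasing and that its first and last terms agree with those of $(m_i)$ — this is precisely what makes the reduction preserve $\sigma$ and the bound on $n$, and hence the exact value of the Ramsey number.
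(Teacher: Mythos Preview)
Your proof is correct and follows essentially the same approach as the paper: enlarge the parts of $H$ so that the Pokrovskiy--Sudakov hypothesis $m_i'\geq i^{22}$ holds, apply their theorem, and conclude by monotonicity of Ramsey numbers. The only cosmetic difference is that the paper takes the cruder enlargement $m_1'=m_1$ and $m_i'=m_k$ for $i>1$, which saves the check that $(m_i')$ is non-decreasing, whereas you take the minimal $m_i'=\max\{m_i,i^{22}\}$; both choices work for the same reason.
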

\begin{proof}Apply the aforementioned result~\cite{PS20} for $K_{m_1,\ldots,m_k}$ with $m_i\geq i^{22}$ 
for each $i$ to $K_{m'_1,\ldots, m'_k}$ where $m'_1=m_1$ and $m'_i=m_k$ if $i>1$. 
For each $i>1$ we have $m'_i=m_k\geq k^{22}\geq i^{22}$, and clearly $m'_1\geq 1^{22}$. 
Thus if $G$ is a $C_n$-free graph on $(n - 1)(k - 1) + m_1$ vertices, then $\overline{G}$ contains a copy of $K_{m'_1,\ldots, m'_k}$, 
which in turn contains $K_{m_1,\ldots, m_k}$.\end{proof}

Our induction may reduce to the case when $H$ is a complete bipartite graph. 
\begin{cor}[{\cite[Corollary 3.8]{PS20}}]\label{PS-cor38}
Let $n, m_1, m_2$ be integers with $m_2 \geq m_1$ , $m_2 \geq 8$, and $n \geq 2\times 10^{49}m_2$. Then we have $R(C_n, K_{m_1 ,m_2}) = n + m_1 - 1$.
\end{cor}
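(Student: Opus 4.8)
\noindent\emph{Proof proposal.} The lower bound is immediate from Burr's construction. Since $\chi(K_{m_1,m_2})=2$ and $\sigma(K_{m_1,m_2})=m_1$, taking $G_0$ to be a disjoint union of $K_{n-1}$ and $K_{m_1-1}$ gives a $C_n$-free graph whose complement $K_{n-1,m_1-1}$ contains no $K_{m_1,m_2}$, as its smaller side has only $m_1-1<m_1\le m_2$ vertices; thus $R(C_n,K_{m_1,m_2})\ge n+m_1-1$. For the upper bound the plan is to take a graph $G$ on $N:=n+m_1-1$ vertices whose complement is $K_{m_1,m_2}$-free and locate a copy of $C_n$. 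Two easy structural consequences of the hypothesis get the ball rolling. First, $\overline G$ has few edges, $e(\overline G)=O\!\left(m_2^{1/m_1}N^{2-1/m_1}\right)$ by the K\H{o}v\'{a}ri--S\'{o}s--Tur\'{a}n bound, so $G$ is dense and, in particular, between any two disjoint sets of at least $m_2$ vertices there is an edge of $G$ (otherwise those two sets would be complete to one another in $\overline G$, yielding $K_{m_2,m_2}\supseteq K_{m_1,m_2}$). Second, at most $m_1-1$ vertices of $G$ have degree below $(N-m_2)/m_1-1$: if $v_1,\dots,v_{m_1}$ all had smaller degree, their common set of non-neighbours outside $\{v_1,\dots,v_{m_1}\}$ would have at least $m_2$ elements, so $\{v_1,\dots,v_{m_1}\}$ would have $m_2$ common neighbours in $\overline G$, again a copy of $K_{m_1,m_2}$.

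The same double-counting idea yields connectivity: if $S$ is a vertex cut of $G$ with $\abs{S}\le m_1-1$, then every component of $G-S$ is complete in $\overline G$ to the rest of $G-S$, and since $\abs{G-S}\ge n-(m_1-1)$ is vastly larger than $2m_2$, a short case analysis (two components of size $\ge m_2$; a single component of intermediate size; or one vertex taken from each of $m_1$ small components) forces $K_{m_1,m_2}\subseteq\overline G$ unless $G-S$ has one component containing all but fewer than $m_1$ vertices. So, up to discarding a set of fewer than $m_1$ vertices, $G$ is $2$-connected. The heart of the proof is then to find a cycle of length \emph{exactly} $n$ inside this dense, well-connected graph; here I would split on the size of $m_2$. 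When $m_2\ge 2^{22}$ and $n\ge 10^{60}m_2$, Corollary~\ref{PS-cor} with $k=2$ applies directly, so the real content lies in the remaining range, where $n$ is still enormous relative to $m_1,m_2$. There I would delete the at most $m_1-1$ low-degree vertices identified above and run a rotation--extension argument of P\'{o}sa type inside the resulting graph $G'$ (which still has $\ge n$ vertices): the ``edge between any two large sets'' property serves as the expansion input that both guarantees a large circumference (a long cycle can absorb any vertex left outside, since that vertex has many neighbours) and provides local flexibility, letting the current cycle be lengthened or shortened by one vertex at a time. As $N$ exceeds $n$ by exactly $m_1-1$, one shortens a long cycle down to length precisely $n$; the case $m_1=1$ is trivial, since then $\overline G$ has maximum degree below $m_2$, so $\delta(G)>N-m_2$ and $G$ is pancyclic by Bondy's theorem.

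The main obstacle, and the reason $n$ must be taken so large, is that the parts of $K_{m_1,m_2}$ may be very unbalanced, so $G$ need not have large minimum degree and one cannot simply invoke a Dirac- or Bondy-type pancyclicity result after a bounded deletion: the vertices of moderately small degree in $G$ can be numerous, yet too few to delete while keeping at least $n$ vertices. The way around this is to argue that $G$ nonetheless behaves like an expander on the scale relevant for cycle absorption, and to track carefully which $m_1-1$ vertices may be left outside the final cycle so that the achievable cycle lengths form an interval containing $n$. Turning the ``few low-degree vertices / many edges between large sets'' picture into a genuine rotation--extension scheme robust enough both to reach a cycle of length near $N$ and to hit $n$ on the nose is where I expect essentially all of the technical work to lie; the surplus bookkeeping, exploiting $\abs{V(G)}-n=m_1-1$, is then routine.
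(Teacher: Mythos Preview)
The paper does not prove this statement at all: it is quoted verbatim as Corollary~3.8 of Pokrovskiy--Sudakov~\cite{PS20} and used as a black box. There is therefore no ``paper's own proof'' to compare your proposal against; the authors simply import the result.

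As for your sketch on its own merits: it is an outline, not a proof, and you acknowledge as much in your final paragraph. The lower bound and the two structural observations (an edge between any two sets of size $\ge m_2$; at most $m_1-1$ vertices of very small degree) are correct and are indeed the starting point in~\cite{PS20}. But the step where you ``run a rotation--extension argument of P\'osa type'' and claim one can shorten or lengthen the cycle one vertex at a time is exactly where all the difficulty lies, and your text does not supply it. P\'osa rotation gives long cycles, not cycles of a prescribed length; converting the expansion property ``every $m_2$-set has an edge to every other $m_2$-set'' into a mechanism that hits length exactly $n$ requires the gadget/absorption machinery that occupies most of Section~3 of~\cite{PS20} (and is the content of their Lemma~3.7, restated here as Lemma~\ref{PS-lemma}). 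Your case split via Corollary~\ref{PS-cor} for $m_2\ge 2^{22}$ is also circular in spirit, since that corollary is itself derived in~\cite{PS20} from the same body of lemmas. In short, the proposal correctly identifies the easy half and the relevant structural facts, but the hard half is asserted rather than proved.
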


The last one we need is an intermediate result on path embeddings.
\begin{lemma}[{\cite[Lemma 3.7]{PS20}}]\label{PS-lemma}
Let $n$ and $m$ be integers with $n \geq 2\times10^{49}m$ and $m \geq 8$. Let $G$ be a graph with $\overline G$ being $K_{m,m}$-free and $\abs{ N_G (A) \cup A} \geq n$ for every $A\subseteq V (G)$ with $\abs{A} \geq m$. Let $x$ and $y$ be two vertices
in $G$ such that there exists an $x$-$y$ path with order at least $8m$. Then there is an $x$-$y$ path of order exactly $n$ in $G$.
\end{lemma}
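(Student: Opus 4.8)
The plan is to run Pósa's rotation--extension method, using the expansion hypothesis to grow paths and the hypothesis that $\overline G$ is $K_{m,m}$-free both to control the two prescribed endpoints and to land on the length $n$ exactly.

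First I would record the basic consequence of $K_{m,m}$-freeness of $\overline G$: every pair of disjoint vertex sets of size at least $m$ spans an edge of $G$ (otherwise $m$-subsets would span a copy of $K_{m,m}$ in $\overline G$). Applied to two suitably positioned blocks of $m$ consecutive vertices along an $x$-$y$ path, this yields a chord skipping a number of vertices that may be prescribed up to an additive error $O(m)$; shortcutting along such a chord converts any $x$-$y$ path of order exceeding $n$ into one of order in $[8m,n]$ (using $n-8m>2m$, which holds as $n\ge 2\times10^{49}m$). So among all $x$-$y$ paths of order in $[8m,n]$ --- a nonempty family, since the given path can be brought into this range --- I would fix one, $P$, with $\abs{V(P)}$ as large as possible. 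If $\abs{V(P)}=n$ we are done; so suppose $\abs{V(P)}=\ell<n$, and it suffices to produce an $x$-$y$ path of order $\ell+1$ (which again lies in $[8m,n]$), contradicting maximality.

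To grow $P$ by one vertex without disturbing the endpoints $x$ and $y$, it is enough to find a vertex $w\notin V(P)$ adjacent to two consecutive vertices of \emph{some} $x$-$y$ path on the vertex set $V(P)$: subdividing the relevant edge with $w$ produces the required path of order $\ell+1$. To locate such a configuration I would perform Pósa rotations at each end in turn, generating a large family of $x$-$y$ paths on $V(P)$ and hence many candidate ``consecutive pairs'' of vertices. Pósa's lemma together with the expansion hypothesis keeps this family large: if the set $S$ of rotation-reachable endpoints (at the $y$-end, say, with $x$ held fixed) ever reached size $m$ with $N(S)\subseteq V(P)$, then, since also $S\subseteq V(P)$, we would get $n\le\abs{N(S)\cup S}\le\abs{V(P)}=\ell<n$, which is impossible; hence either the family is large enough to supply the pairs, or at some stage a reachable endpoint has a neighbour outside $V(P)$, in which case we append it and rotate to return the endpoint to $y$ (iterating if necessary), again arriving at an $x$-$y$ path of order $\ell+1$. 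Finally, with at least $m$ candidate pairs in hand, one more appeal to $K_{m,m}$-freeness of $\overline G$ forces some external vertex to be adjacent to both members of one of them.

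I expect the real obstacle to be precisely the interplay between the fixed endpoints and the exact target length: a single rotation moves an endpoint, so ``append one vertex and stop'' is unavailable, and each step must be arranged so as to change neither $\{x,y\}$ nor overshoot $n$. Reconciling these is what makes the $K_{m,m}$-free hypothesis essential, and it is also the source of the large constant in $n\ge 2\times10^{49}m$: the rotations, the chord shortcutting and the re-anchoring of endpoints each cost a constant factor in the ratio $n/m$, and these compound. The rest is the standard bookkeeping of the rotation--extension method, together with routine estimates of the kind indicated above.
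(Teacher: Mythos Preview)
This lemma is quoted verbatim from \cite{PS20} and the present paper gives no proof of it; it is used as a black box in the final paragraph of the proof of Theorem~\ref{thm:main2}. So there is nothing in the paper to compare your argument against directly.

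On its own merits, your sketch has a genuine gap at the key step. You write: ``with at least $m$ candidate pairs in hand, one more appeal to $K_{m,m}$-freeness of $\overline G$ forces some external vertex to be adjacent to both members of one of them.'' This does not follow. The hypothesis that $\overline G$ is $K_{m,m}$-free tells you that between any two disjoint $m$-sets there is \emph{some} edge of $G$; it does not produce a common neighbour of a prescribed pair. For instance, given pairs $(u_1,v_1),\ldots,(u_m,v_m)$ on $P$ and external vertices $w_1,\ldots,w_m$, one could have $w_j u_j\in E(G)$ and $w_j v_{j+1}\in E(G)$ for each $j$ (indices modulo $m$) and no other edges between these sets: every relevant bipartite pair of $m$-sets spans an edge, yet no $w_j$ is adjacent to both members of any $(u_i,v_i)$. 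So your ``subdivide an edge of some rotated path'' mechanism has no driver, and the one-vertex extension with both endpoints pinned is exactly the hard part you have not handled. The alternative branch you mention --- extend by an outside neighbour of a rotation endpoint and then ``rotate to return the endpoint to $y$'' --- is also where the work lies: a rotation from the new end need not recover $y$, and you give no reason why it should.

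For comparison, the argument in \cite{PS20} does not try to grow the path one vertex at a time with both ends fixed. It instead builds, using the $K_{m,m}$-freeness and expansion hypotheses, a chain of small gadgets anchored at $x$ and $y$ that can realise an entire interval of path orders (this is the same philosophy as the adjusters in the present paper), and then shows that this interval contains $n$. The large constant $2\times10^{49}$ arises from the room needed to build and link those gadgets, not from compounding losses in repeated rotation--extension steps.
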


\subsection{Adjusters}
In order to construct cycles covering a range of lengths, we will use the following graphs called \emph{adjusters}. In the following definition, $m$ and $k$ are fixed numbers. When constructing adjusters, the relevant values of $m$ and $k$ will be clear from context.

\begin{defn}\label{defn:adjuster}For $r\geq 1$, an \emph{$r$-adjuster} consists of $r$ disjoint odd cycles $C_1,\ldots,C_r$, with $C_i$ having distinguished 
vertices $v_i,w_i$ which are almost-antipodal (\ie $\operatorname{dist}_{C_i}(v_i,w_i)=(\abs{C_i}-1)/2$), together with paths 
$P_1,\ldots,P_r$ where the endpoints of $P_i$ are $w_i$ and $v_{i+1}$ (subscripts taken modulo $r$), 
such that the paths are internally disjoint from each other and from the cycles, and $\abs{C_i} \leq 2000\log k\log(km)$. For the special case $r=0$, a $0$-adjuster is simply a cycle.

We refer to the cycles $C_i$ as the \emph{short cycles} of the adjuster. 
There are also cycles which use all the paths $P_i$ and part of each short cycle; we refer to these as the \emph{routes} of the adjuster. An $r$-adjuster contains $2^r$ routes, whose lengths are $r+1$ consecutive integers. 
We define the \emph{length} of an adjuster to be the length of its longest route. \end{defn}

\begin{figure}[ht]
\centering
\begin{tikzpicture}
\draw (0,0) arc [start angle=-90, end angle=90, radius=0.6];
\draw[red, line width=2pt, dash pattern=on 4pt off 4pt] (0,0) arc [start angle=-90, end angle=90, radius=0.6];
\draw (0,0) arc [start angle=270, end angle=90, y radius=0.6, x radius=0.5] node[midway, anchor=east] {$C_r$};
\filldraw (0,0) circle (0.05) node[anchor=north east] {$w_r$};
\filldraw (0,1.2) circle (0.05);

\draw (0,1.2) arc [start angle=180, end angle=90, radius=1];
\draw[red, line width=2pt, dash pattern=on 4pt off 4pt] (0,1.2) arc [start angle=180, end angle=90, radius=1];

\draw (1,2.2) arc [start angle=-180, end angle=0, radius=0.6];
\draw[red, line width=2pt, dash pattern=on 4pt off 4pt] (1,2.2) arc [start angle=-180, end angle=0, radius=0.6];
\draw (1,2.2) arc [start angle=180, end angle=0, x radius=0.6, y radius=0.5];
\filldraw (1,2.2) circle (0.05);
\filldraw (2.2,2.2) circle (0.05);

\draw (2.2,2.2) -- (3.4,2.2);
\draw[red, line width=2pt, dash pattern=on 4pt off 4pt] (2.2,2.2) -- (3.4,2.2);

\draw (3.4,2.2) arc [start angle=-180, end angle=0, radius=0.6];
\draw[red, line width=2pt, dash pattern=on 4pt off 4pt] (3.4,2.2) arc [start angle=-180, end angle=0, radius=0.6];
\draw (3.4,2.2) arc [start angle=180, end angle=0, x radius=0.6, y radius=0.5];
\filldraw (3.4,2.2) circle (0.05);
\filldraw (4.6,2.2) circle (0.05);

\draw (4.6,2.2) -- (5.8,2.2);
\draw[red, line width=2pt, dash pattern=on 4pt off 4pt] (4.6,2.2) -- (5.8,2.2);

\draw (5.8,2.2) arc [start angle=-180, end angle=0, radius=0.6];
\draw (5.8,2.2) arc [start angle=180, end angle=0, x radius=0.6, y radius=0.5];
\draw[red, line width=2pt, dash pattern=on 4pt off 4pt] (5.8,2.2) arc [start angle=180, end angle=0, x radius=0.6, y radius=0.5];
\filldraw (5.8,2.2) circle (0.05);
\filldraw (7,2.2) circle (0.05);

\draw (7,2.2) -- (8.2,2.2);
\draw[red, line width=2pt, dash pattern=on 4pt off 4pt] (7,2.2) -- (8.2,2.2);

\draw (8.2,2.2) arc [start angle=-180, end angle=0, radius=0.6];
\draw (8.2,2.2) arc [start angle=180, end angle=0, x radius=0.6, y radius=0.5];
\draw[red, line width=2pt, dash pattern=on 4pt off 4pt] (8.2,2.2) arc [start angle=180, end angle=0, x radius=0.6, y radius=0.5];
\filldraw (8.2,2.2) circle (0.05);
\filldraw (9.4,2.2) circle (0.05);

\draw (9.4,2.2) arc [start angle=90, end angle=0, radius=1];
\draw[red, line width=2pt, dash pattern=on 4pt off 4pt] (9.4,2.2) arc [start angle=90, end angle=0, radius=1];

\draw (10.4,0) arc [start angle=270, end angle=90, radius=0.6];
\draw[red, line width=2pt, dash pattern=on 4pt off 4pt] (10.4,0) arc [start angle=270, end angle=90, radius=0.6];
\draw (10.4,0) arc [start angle=-90, end angle=90, y radius=0.6, x radius=0.5];
\filldraw (10.4,0) circle (0.05);
\filldraw (10.4,1.2) circle (0.05);

\draw (0,0) arc [start angle=-180, end angle=-90, radius=1] node[midway, anchor=north east] {$P_r$};
\draw[red, line width=2pt, dash pattern=on 4pt off 4pt] (0,0) arc [start angle=-180, end angle=-90, radius=1];

\draw (2.2,-1) -- (3.4,-1) node [midway, anchor=north] {$P_1$};
\draw[red, line width=2pt, dash pattern=on 4pt off 4pt] (2.2,-1) -- (3.4,-1);

\draw (1,-1) arc [start angle=-180, end angle=0, x radius=0.6, y radius=0.5] node [midway, anchor=north] {$C_1$};
\draw (1,-1) arc [start angle=180, end angle=0, radius=0.6];
\draw[red, line width=2pt, dash pattern=on 4pt off 4pt] (1,-1) arc [start angle=180, end angle=0, radius=0.6];
\filldraw (1,-1) circle (0.05) node [anchor=south east] {$v_1$};
\filldraw (2.2,-1) circle (0.05) node [anchor=south west] {$w_1$};

\draw (4.6,-1) -- (5.8,-1) node [midway, anchor=north] {$P_2$};
\draw[red, line width=2pt, dash pattern=on 4pt off 4pt] (4.6,-1) -- (5.8,-1);

\draw (3.4,-1) arc [start angle=180, end angle=0, radius=0.6];
\draw[red, line width=2pt, dash pattern=on 4pt off 4pt] (3.4,-1) arc [start angle=180, end angle=0, radius=0.6];
\draw (3.4,-1) arc [start angle=-180, end angle=0, x radius=0.6, y radius=0.5] node [midway, anchor=north] {$C_2$};
\filldraw (3.4,-1) circle (0.05) node [anchor=south east] {$v_2$};
\filldraw (4.6,-1) circle (0.05) node [anchor=south west] {$w_2$};

\draw (5.8,-1) arc [start angle=-180, end angle=0, x radius=0.6, y radius=0.5];
\draw[red, line width=2pt, dash pattern=on 4pt off 4pt] (5.8,-1) arc [start angle=-180, end angle=0, x radius=0.6, y radius=0.5];
\draw (5.8,-1) arc [start angle=180, end angle=0, radius=0.6];
\filldraw (5.8,-1) circle (0.05);
\filldraw (7,-1) circle (0.05);

\draw (7,-1) -- (8.2,-1);
\draw[red, line width=2pt, dash pattern=on 4pt off 4pt] (7,-1) -- (8.2,-1);

\draw (8.2,-1) arc [start angle=180, end angle=0, radius=0.6];
\draw[red, line width=2pt, dash pattern=on 4pt off 4pt] (8.2,-1) arc [start angle=180, end angle=0, radius=0.6];
\draw (8.2,-1) arc [start angle=-180, end angle=0, x radius=0.6, y radius=0.5];
\filldraw (8.2,-1) circle (0.05);
\filldraw (9.4,-1) circle (0.05);

\draw (9.4,-1) arc [start angle=-90, end angle=0, radius=1];
\draw[red, line width=2pt, dash pattern=on 4pt off 4pt] (9.4,-1) arc [start angle=-90, end angle=0, radius=1];
\end{tikzpicture}
\label{fig:gadget-ex}\caption{An $r$-adjuster. An example of a route in this $r$-adjuster is given by the red dashed line.}
\end{figure}
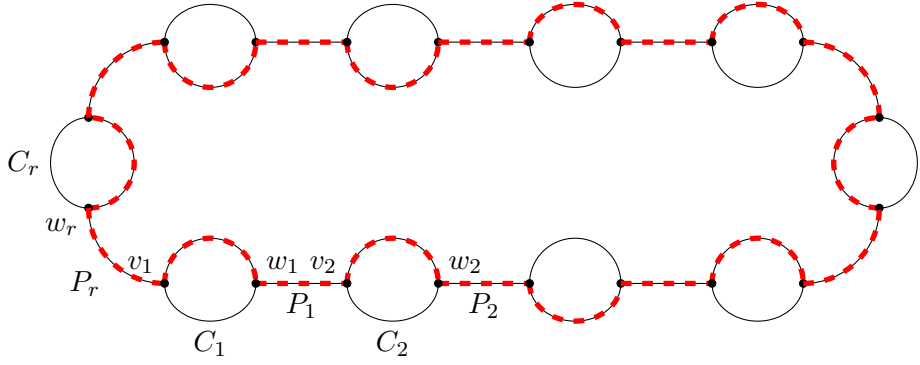

\section{Sublinear expansions}\label{sec:expansion}

Given a large graph whose complement is $H$-free we will pass to a subgraph which is not too large 
and has sublinear expansion properties, in which we can find an $r$-adjuster for some suitable $r$. 
After removing this $r$-adjuster we repeat the process with the remaining graph. 
We then join many adjusters together to create a very large adjuster with greater flexibility in the length of a route, 
and where this length can exceed $n$. We will shrink the structure, if necessary, to ensure that 
the maximum length of a cycle is not much more than $n$, without impairing this flexibility. 
We then show this structure contains $C_n$.

In this section we define the expansion properties we need, and show that any sufficiently large graph whose complement is $F$-free, for some graph $F$, contains an expanding subgraph of suitable size.
\begin{defn}\label{def:expand}
  A graph $G$ \emph{$(\Delta,\beta,d,k)$-expands into a set $W \subseteq V(G)$} if the following holds.
  \begin{itemize}
  \item $\abs{N_G(S) \cap W} \geq \Delta \abs{S}$, for every $S\subseteq V(G)$ with $\abs{S} \leq \beta d$.
  \item $\abs{N_G(S)} \geq \frac{\abs{S}}{10\log k}$, for every $S\subseteq V(G)$
    with $\beta d \leq \abs{S} \leq \abs{G}/2$.
  \end{itemize}
\end{defn}

Note that whenever we say that $G$ $(\Delta, \beta, d, k)$-expands, and do not specify the set $G$ expands into, we mean that $G$ $(\Delta, \beta, d, k)$-expands into $W = V(G)$.

\begin{lemma}\label{lem:expansion} Fix a complete $k$-partite graph $H$ of order $mk$, where $k\geq 2$, then for all $\beta, M, \Delta \geq 1$ with $M \geq 60 \beta \geq 240 \Delta$, $M \geq 10\beta \Delta$, $M \geq 4k$ and $\beta \geq 10\log k$, the following holds. Let $G$ be a graph with $\overline{G}$ being $H$-free and $\abs{G} \geq Mmk\log k$. Then there exists
	a subgraph $H'\sqsubseteq H$ induced by some collection of at least two parts and an induced subgraph $F \subseteq G$ such that the following holds:
  \begin{itemize}
	\item $\overline{F}$ is $H'$-free;
	\item $M\abs{H'}\log \chi(H') - m(H') \leq \abs{F} \leq M\abs{H'}\log \chi(H')$.
	\item $F$ is $(\Delta,\beta,m(H'),\chi(H'))$-expanding.
\end{itemize}
\end{lemma}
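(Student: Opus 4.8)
The plan is to prove the lemma by induction on $k=\chi(H)$, using a standard sublinear-expansion extraction argument à la Komlós–Szemerédi, but with the key twist that at each step we must control the complement remaining $F$-free. The base case $k=2$ is easy: here we cannot reduce the number of parts, so we must produce an expanding subgraph $F\subseteq G$ with $\overline F$ still $H$-free (\ie $K_{m_1,m_2}$-free, where we may pass to $H$ with both parts of size $m$). Start from $G_0=G$ and repeatedly: if the second expansion condition fails, there is a set $S$ with $\beta m\le\abs S\le\abs{G_i}/2$ and $\abs{N_{G_i}(S)}<\abs S/(10\log k)$; then one of the two sides $S$ or $V(G_i)\setminus(S\cup N(S))$ has at least $\abs{G_i}/2-\abs S/(10\log k)\ge\abs{G_i}/3$ vertices, and we pass to the subgraph induced on that side. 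This process only removes the ``neighbourhood'' vertices, so $\overline F$ stays $H$-free automatically (induced subgraphs of $\overline G$); each iteration loses at most a factor $\operatorname{poly}$ but we must be careful the size stays in the window $[Mmk\log k - m, Mmk\log k]$. Here is the first real subtlety: the raw Komlós–Szemerédi process loses a multiplicative factor, not an additive $m$, so we need to \emph{first} trim $G$ down to a subgraph of size just above the target, argue that we can find an induced subgraph of the \emph{exact} right size inside an expander (or stop the process at the right moment), and only then run the expansion extraction. Controlling the additive error $m(H')$ is why the lemma is stated with a one-sided window rather than an equality.

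For the first expansion condition — $\abs{N_G(S)\cap W}\ge\Delta\abs S$ for all small $S$ — we use the hypothesis $M\ge 4k$ together with $\overline F$ being $F$-free. If some small $S$ (with $\abs S\le\beta m$) has $\abs{N_F(S)}<\Delta\abs S$, then $T=S\cup N_F(S)$ has size $<(\Delta+1)\abs S\le(\Delta+1)\beta m$, and $S$ is ``isolated'' inside $F-T$ from the rest; repeatedly peeling off such witnesses removes few vertices in total (since $\abs{N(S)}\ge$ something, there cannot be too many disjoint witnesses before the graph shrinks), but the cleaner route is: having already passed to a graph of min-degree-like lower bound coming from $\overline F$ being $K_{m,m}$-free (any vertex set of size $\ge m$ has external neighbourhood of size $\ge\abs G-2m$, say), the first condition with $\Delta$ follows as long as $\beta m\Delta$ is small compared to $\abs F$, which is exactly the hypothesis $M\ge 10\beta\Delta$. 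So this part is essentially bookkeeping; the content is all in the reduction and in the inductive step.

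The inductive step ($k\ge 3$): if $G$ is already expanding on all of $V(G)$ after trimming to the right size, we are done with $H'=H$. Otherwise the second expansion condition fails and we split off a subset $G'$ with $\abs{G'}\ge\abs G/3$ as above. Now $\overline{G'}$ is still $H$-free, but $G'$ may be too small to re-run the whole argument for $H$. The point of allowing $H'\sqsubsetneq H$ is this: iterate the splitting. Each split preserves $H$-freeness of the complement and loses at most a constant factor; after $O(\log k)$ splits we either reach an expander (on the current vertex set) of size still comfortably above $Mmk'\log k'$ for the \emph{reduced} graph — but wait, this doesn't change $k$. The genuine reduction to fewer parts must come from elsewhere: if $\overline{G'}$ is $H$-free but $\abs{G'}$ has dropped below $Mmk\log k$, we instead observe that $\overline{G'}$ being $H$-free with $H$ complete $k$-partite means, by considering a part $V_1$ of $H$, that either $\overline{G'}$ contains no $K_{\abs{V_1}}$-blow-up of $H-V_1$ — i.e. $\overline{G'}$ is $H'$-free for $H'=H-V_1\sqsubset H$ (when $\sigma$ is achieved by $V_1$) — or we can still find $H$ in $\overline{G'}$, contradiction. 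Applying the inductive hypothesis of the lemma to $G'$ with the \emph{smaller} complete multipartite graph $H'$ (checking $\sigma(H')=\sigma(H)$ is arranged by choosing $V_1$ to be a largest part, or by the caveat in the lemma that this condition isn't needed in intermediate steps) gives the desired $F$ and completes the induction.

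\textbf{Main obstacle.} The hardest part is the interplay between \emph{two} competing size requirements: running the Komlós–Szemerédi-type extraction wants a multiplicative cushion (hence the $\log k$ and the constant $M$), while the final conclusion demands the size of $F$ be pinned down to within an \emph{additive} $m(H')$. Threading these — deciding exactly when to stop splitting, when to pass to a smaller $H'$, and how to land in the narrow window $[M\abs{H'}\log\chi(H') - m(H'),\,M\abs{H'}\log\chi(H')]$ rather than merely ``$\Theta$'' of it — while simultaneously never losing $H'$-freeness of the complement and verifying the numerical hypotheses ($M\ge 60\beta\ge240\Delta$, $M\ge10\beta\Delta$, $M\ge4k$, $\beta\ge10\log k$) propagate correctly to $H'$ under the reduction $k\mapsto k'<k$, is where essentially all the care goes. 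In particular one needs that $\abs{H'}\log\chi(H')$ does not drop by more than a controlled factor when removing a part, which forces the choice of which part to remove (removing a smallest part changes $\abs H$ least; removing to keep $\sigma$ fixed may conflict), and this case analysis is the technical heart.
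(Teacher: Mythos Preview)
Your proposal has a genuine gap in the inductive step: the mechanism by which you reduce the number of parts of $H$ is not the one that works, and the one you sketch is not valid.

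When the second expansion condition fails with witness $S$, you propose to pass to one side of size at least $\abs{G}/3$ and observe that its complement is still $H$-free. That is true, but it gets you nowhere: you still have a (smaller) graph with $H$-free complement, and you have no reason to believe its complement is $H'$-free for any proper $H'\sqsubset H$. Your attempted justification (``either $\overline{G'}$ contains no $K_{\abs{V_1}}$-blow-up of $H-V_1$ \ldots\ or we can still find $H$ in $\overline{G'}$, contradiction'') does not hold: containing $H-V_1$ in $\overline{G'}$ does not produce $H$ anywhere, since you have thrown away the other side and with it any vertices that might have served as $V_1$. Iterating the split only shrinks $G$ without ever touching $k$, so the induction never fires.

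The correct mechanism is to keep \emph{both} sides. If $S$ has small external neighbourhood, then $S$ and $T:=V(G)\setminus(S\cup N(S))$ have no $G$-edges between them, so in $\overline{G}$ the pair $(S,T)$ is complete bipartite. Hence for any splitting $H=H_1\vee H_2$, if $\overline{G[S]}\supset H_1$ and $\overline{G[T]}\supset H_2$ then $\overline{G}\supset H$, a contradiction; so one of $\overline{G[S]}$, $\overline{G[T]}$ is free of its piece, and \emph{that} is the graph you recurse on with a genuinely smaller $H'$. The real technical heart is then not the additive window you flag (that falls out immediately in the good case by removing a maximal non-expanding set $X$ of size $<m$), but the numerical verification that $\abs{S}$ and $\abs{T}$ are simultaneously large enough to host a partition of the parts of $H$ into $s$ and $t$ parts with $s+t=k$, each side meeting the inductive size hypothesis $M\abs{H'}\log\chi(H')$. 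This requires choosing which parts of $H$ go to which side (in the paper: the $t$ smallest to $T$, the next $s$ to $S$) and a somewhat delicate inequality to rule out $s+t<k$.
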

\begin{proof}
We proceed by induction on $k$, considering two cases. The base case $k=2$ will be covered by the inductive argument, since we argue that we may either (i) construct a suitable subgraph directly for $H'=H$, (ii) reduce to some smaller $H'\sqsubset H$ with at least two parts, or (iii) obtain a contradiction; in the base case only the first and third possibilities arise. Note that, by removing vertices if necessary, we may assume $\abs{G}=Mmk \log k$ for $k\geq 2$. We write $m_1\leq\cdots\leq m_k$ for the orders of the vertex classes of $H$.

The first case is if there exists a set $S\subset V(G)$ such
that $m \leq \abs{S} \leq \abs{G}/2$ and $\abs{N_G(S)} \leq \abs{S}/(10\log k) +
m\Delta\beta$. Let $T = V(G)\setminus (N_G(S) \cup S)$. 
Since $M \geq 10\Delta\beta $, we have that
\begin{align*}  
    \abs{T}\geq \abs{G}-\abs{N_G(S)\cup S}
                                &   \geq \left(1-1/(10\log k)\right)\abs{G}/2 - m\Delta\beta \\
                                &   = m\left(\tfrac{Mk}{2}(\log k-1/10) - \Delta\beta\right) \\
                                &   \geq km.
\end{align*} We distinguish two subcases depending on whether $\abs{S}\geq mk$ or $\abs{S}<mk$.

First, suppose $\abs{S}\geq mk$. We choose $t \in \bN$ as large as possible such that $\abs{T}\geq M\abs{H_t}\log t$, where $H_t$ is the graph induced by the $t$ smallest parts of $H$.
We then choose $s \in \bN$ as large as possible so that $\abs{S}\geq M\abs{H_{s,t}}\log s$ where $H_{s,t}$ is the graph induced by the $s$ smallest parts of $H$ not contained in $H_t$; observe that $s,t\geq 1$ and $s,t\leq k-1$. 

We will quickly be able to resolve our first subcase after proving the following claim.

\begin{claim}\label{claim:s+tgeqk} 
$s + t = k$.
\end{claim} 

\begin{poc}
The claim is trivial for $k=2$, since $s,t\geq 1$. For all $k\geq 3$ we have
\[\abs{T} \geq \abs{G}/2-\frac{\abs{G}}{20\log k} - \frac{\abs{G}}{10k\log k} \geq M\abs{H_{\ceil{k/2}}}\log(\ceil{k/2}),\] which implies $t\geq\ceil{k/2}$; in particular, this proves the claim when $k=3$.

Assume for a contradiction that $s + t < k$, where $k\geq 4$ is fixed; as $t \geq \ceil{k/2}$, we have $s\leq(k-1)/2$. Maximalities of $s$ and $t$ ensure that 

\begin{equation}\label{eq:lem2.4begin1}
    M\abs{H_{s+1, t}}\log(s+1) > \abs{S}
\end{equation} and
\begin{equation}\label{eq:lem2.4begin2}
    M\abs{H_{t+1}}\log(t+1) > \abs{T}.
\end{equation} Observe that $H_{t+1}$ and $H_{s+1, t}$ both contain the $(t+1)$th smallest part of $H$. 

Also, $\abs{S}+\abs{T} = \abs{G} - \abs{N_G(S)} > Mmk\log k - \frac{\abs{S}}{10\log k} - m\Delta\beta$, hence by \eqref{eq:lem2.4begin1}, \eqref{eq:lem2.4begin2} and the fact that $M\geq 10\beta\Delta$ we have
\begin{equation}\label{eq:lem2.4main}
    \abs{H_{t+1}}\log(t+1) + \abs{H_{s+1, t}}\log(s + 1)c(k) \geq  mk\log k - \frac{m}{10},
\end{equation}
where $c(k)=\left(1 + \frac{1}{10\log k}\right)$.
We want to show that \eqref{eq:lem2.4main} is in fact false, which provides the contradiction we need to prove Claim~\ref{claim:s+tgeqk}. To this end, we may assume $s+t = k-1$. It follows that 
\begin{align*}\abs{H_{t+1}} + \abs{H_{s+1, t}}=mk+m_{t+1}
\leq mk+\frac{\abs{H_{s+1, t}}}{s+1};\end{align*}
recall that $m_{t+1}$ is the size of the $(t+1)$th smallest part of $H$. Consequently the LHS of \eqref{eq:lem2.4main} is at most
\begin{equation}\label{LHS-ub}
    c(k)\log(s+1)\abs{H_{s+1, t}} + \log(k-s)\left(mk - \abs{H_{s+1, t}}\frac{s}{s+1}\right).
\end{equation}
Note that $m(s+1)\leq \abs{H_{s+1, t}}\leq mk$. For fixed $s$, \eqref{LHS-ub} is linear in $|H_{s+1, t}|$, and consequently within this range it is maximised either at $\abs{H_{s+1, t}}=m(s+1)$ or at $\abs{H_{s+1, t}}=mk$.

We first consider the case $\abs{H_{s+1, t}}=m(s+1)$, when \eqref{LHS-ub} becomes \[m(c(k)(s+1)\log(s+1)+(k-s)\log(k-s)).\] This is a convex function of $s$, and so for $1\leq s\leq (k-1)/2$ is maximised when $s=1$ or when $s=(k-1)/2$. When $s=1$ we have
\begin{equation}\label{bal-small-s}
2c(k)\log 2+(k-1)\log(k-1)<k\log k-0.1\end{equation}
for all $k\geq 4$. When $s=(k-1)/2$, we likewise have
\begin{equation}\label{bal-large-s}
(1+c(k))\frac{k+1}{2}\log\left(\frac{k+1}{2}\right)<k\log k-0.1\end{equation}
for all $k\geq 4$.

Finally, we consider the case $\abs{H_{s+1, t}}=mk$, when \eqref{LHS-ub} becomes \[mk(\log(s+1)c(k)+\log(k-s)/(s+1)).\]
Suppose $2\leq s\leq \log(k-s)$. Then $\frac{s}{s+1}\log(k-s)\geq\frac{s^2}{s+1}\geq c(k)\log(s+1)+0.1$, and so \eqref{LHS-ub} is decreasing in $\abs{H_{s+1, t}}$, and maximised when $\abs{H_{s+1, t}}=m(s+1)$. Similarly if $s=1$ and $k\geq 7$, \eqref{LHS-ub} is decreasing. For $s=1$ and $4\leq k\leq 6$, by direct calculation \eqref{LHS-ub} contradicts \eqref{eq:lem2.4main}. Thus we may assume that $s\geq\log(k-s)$. Since $s\leq(k-1)/2$ we have $s+1\leq k-s$. Thus
\[\frac{\mathrm{d}}{\mathrm{d}s}\left(\log(s+1)c(k)+\frac{\log(k-s)}{s+1}\right)=\frac{c(k)(s+1)-\frac{s+1}{k-s}-\log(k-s)}{(s+1)^2}>0\]
and so $\log(s+1)c(k)+\log(k-s)/(s+1)$ is increasing in $s$ for $\log(k-s)\leq s\leq (k-1)/2$. Hence within this range $mk(\log(s+1)c(k)+\log(k-s)/(s+1))$ is maximised when $s=(k-1)/2$, and we obtain
\begin{equation}\label{unbal-large-s}k\left(c(k)+\frac{2}{k+1}\right)\log\left(\frac{k+1}{2}\right)<k\log k - 0.1\end{equation}
for all $k\geq 4$. Then \eqref{bal-small-s}, \eqref{bal-large-s} and \eqref{unbal-large-s} together show that \eqref{LHS-ub} is less than $mk\log k - \frac{m}{10}$, contradicting \eqref{eq:lem2.4main}.
\end{poc}

Hence $s+t= k$. Now either $\overline{G[T]}$ is $H_t$-free or $\overline{G[S]}$ is $H_{s,t}$-free, since otherwise disjointness of $T$ and $N_G(S)\cup S$ ensures we have a copy of $H$ in $\overline{G[S\cup T]}$. However, since $\abs{S},\abs{T}\geq mk$, $\overline{G[T]}$ is not $H_1$-free and $\overline{G[S]}$ is not $H_{1,k-1}$-free. Thus in the former case we have $2\leq t\leq k-1$ so we may apply the induction hypothesis to $G[T]$ with $k$ replaced by $t$ and $H$ by $H_t$, and in the latter $2\leq s\leq k-1$ so we may apply the induction hypothesis to $G[S]$ with $k$ replaced by $s$ and $H$ by $H_{s,t}$. Here, such applications of the induction hypothesis are possible by the definition of $s$ and $t$.

Secondly, suppose $m\leq \abs{S}<mk$. 
Note that, since $M \geq 10\Delta\beta$ and $M \geq 4k$, in this case \[\abs{T}\geq Mmk\log k-2mk-m\Delta\beta \geq M(mk-1)\log (k-1).\] Indeed, $Mmk\log \frac{k}{k-1} \geq Mm \geq 2mk + m\Delta\beta$ where we have used $\log(1+x) \geq \frac{x}{1+x}$ with $x = \frac{1}{k-1}$. Thus $\abs{T} \geq M\abs{H_{k-1, 1}}\log (k-1)$ and $\abs{S} \geq m \geq \abs{H_1}$. Since $H_1$ is an independent set of size at most $m$, $\overline{G[S]}$ contains $H_1$. 
Thus $\overline{G[T]}$ must be $H_{k-1, 1}$-free, as otherwise $\overline{G[S\cup T]}$ contains $H$, as before. Since $\abs{T}\geq mk$, it is not $H_{1,1}$-free and so $k-1\geq 2$.
We may therefore apply the induction hypothesis to $G[T]$ with $k-1$ replacing $k$ and $H_{k-1,1}$ replacing $H$.  

The final case is where we have
$\abs{N_G(S)\cup S} \geq \left(1+1/(10\log{k})\right)\abs{S} +  m\Delta\beta$ for every set $S \subseteq V(G)$
with $m \leq \abs{S} \leq \abs{G}/2$. Consider the largest set $X \subseteq V(G)$ with $\abs{X}
\leq 2m$ such that $\abs{N_G(X)} \leq \Delta \abs{X}$.
Since $\beta \geq 4\Delta$, we have that $\abs{N_G(X) \cup X} \leq (\Delta + 1)\abs{X} \leq m\Delta\beta $. 
Thus, we must have $\abs{X} < m$. Now consider $F = G- X$. If there is some $Y\subset V(F)$ with $\abs{Y}\leq m$ and $N_F(Y) < \Delta\abs{Y}$, then
\[
    \abs{N_G(X \cup Y) \cup (X \cup Y)} 
                     \leq \abs{N_G(X) \cup X} + \abs{N_F(Y) \cup Y} < (\Delta + 1)\abs{X\cup Y},
\]
contradicting our choice of $X$. 

For $\abs{Y}\geq m$, we have 
\[\abs{N_F(Y) \cup Y} \geq \abs{N_G(Y) \cup Y} - \abs{X}\geq \left(1 + \frac{1}{10\log{k}}\right)\abs{Y} + m\Delta\beta - m.\] 
If $\abs{Y}\leq \beta m$ then utilising $\beta \geq 10\log k$ we have that $\abs{N_F(Y) \cup Y} \geq\abs{Y}+\Delta\abs{Y}$, whereas if $\beta m \leq \abs{Y} \leq \abs{F}/2$ then $\abs{N_F(Y) \cup Y}\geq \left(1 + 1/10\log{k}\right)\abs{Y}$. Thus $F$ fulfils the requirements of Lemma~\ref{lem:expansion} for $H'=H$.
\end{proof}

These expansion properties give us three abilities we will need to construct adjusters. 
The first is the ability to link together large sets, while avoiding a smaller set, via a path which is not too long. 
The main advantage of our stronger definition of expansion (compared to that in \cite{PS20}) is that the length of path required is much shorter, 
being only of order $\log k\log \abs{G}$, and the control we have over the size of the expanding subgraph means 
that we can ensure $\log\abs{G}=O(\log k)$.

\begin{lemma}\label{lem:short-path}
Suppose that $G$ $(\Delta,\beta,d,k)$-expands into a set $W\subseteq V(G)$, for $\Delta\geq2$ and $k \geq 2$. 
If $A,B,C\subseteq V(G)$ are disjoint sets with $A,B\neq\varnothing$ such that 
$\abs{C\cap W} \leq (\Delta-2) \min\{\abs{A},\abs{B}\}$ and
$\abs{C}\leq \beta d/(20\log k)$, then there is an $A$-$B$ path $P$ in $G$ avoiding $C$ and with $\abs{P}\leq44\log k\log\abs{G}$.
\end{lemma}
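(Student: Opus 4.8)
The plan is to build the $A$--$B$ path by a standard breadth-first-search (BFS) expansion argument, run simultaneously from $A$ and from $B$, using the two-regime expansion of Definition~\ref{def:expand} to guarantee that each ball grows geometrically until it is large, and then that the two balls must meet. Set $a=\abs{A}$, $b=\abs{B}$ and assume without loss of generality $a\le b$. First I would handle the small-set regime: define $A_0=A$ and $A_{i+1}=A_i\cup N_G(A_i)$, and argue that as long as $\abs{A_i}\le\beta d$ we have $\abs{A_i\cap W}$ growing by a factor related to $\Delta$. The subtlety is that we must avoid $C$; but since $\abs{C}\le\beta d/(20\log k)$ is tiny compared to the expansion factor, at each step we may pass to $A_i'=A_i\setminus C$ and still have $\abs{N_G(A_i')\cap W}\ge\Delta\abs{A_i'}$, losing at most $\abs{C}$ vertices, so after $O(\log(\beta d))=O(\log\abs{G})$ steps the set reaches size $\ge\beta d$ (using $\Delta\ge2$ to get genuine geometric growth, with room to spare to absorb the $C$-deletions). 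Do the same from $B$.

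Next, in the large-set regime $\beta d\le\abs{S}\le\abs{G}/2$, each BFS step multiplies the size by $(1+\tfrac{1}{10\log k})$, so after a further $O(\log k\cdot\log\abs{G})$ steps (this is where the $\log k$ factor and hence the constant $44$ enters) each of the two balls, with $C$ removed at every step, exceeds $\abs{G}/2$ — here I would be slightly careful and stop each ball as soon as it would exceed $\abs{G}/2$, or alternatively grow both to size roughly $\abs{G}/2$ and note that, because $\abs{C}$ is negligible, two sets of size more than $\tfrac{1}{2}(\abs{G}-\abs{C})$ cannot be disjoint, so the two expansions must overlap at a vertex outside $C$. Tracing back the BFS layers on each side gives an $A$--$C^c$--$B$ walk, from which we extract a path $P$ avoiding $C$, of length at most the sum of the two BFS depths.

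For the length bound, the small-set phase contributes at most $\log_{\Delta'}(\beta d)\le\log_2(\beta d)\le\log\abs{G}$ steps from each side (since $\Delta\ge2$ gives ratio at least, say, $3/2$ after absorbing $C$), and the large-set phase contributes at most $\log_{1+1/(10\log k)}(\abs{G})\le (10\log k+1)\log\abs{G}\le 22\log k\log\abs{G}$ steps from each side; summing the two sides and being generous with constants yields $\abs{P}\le 44\log k\log\abs{G}$. The only place one has to be genuinely attentive is the bookkeeping that deleting $C$ at each step does not destroy the expansion: because $\abs{C}\le\beta d/(20\log k)$ and each set we apply expansion to has size at least $1$ (in the first phase) growing to $\ge\beta d$ (entering the second phase), the additive loss of $\abs{C}$ is always dominated by the multiplicative gain — for instance in the first phase $\abs{N_G(A_i\setminus C)\cap W}\ge\Delta(\abs{A_i}-\abs{C})$, and maintaining the invariant $\abs{A_i}\ge\abs{A_0}\cdot c^{\,i}$ for a suitable $c\in(1,\Delta)$ absorbs this comfortably; similarly the condition $\abs{C\cap W}\le(\Delta-2)\min\{\abs{A},\abs{B}\}$ is exactly what is needed to guarantee a net gain of at least $2\abs{S}$ (hence strict growth) at the very first step from the smaller side, which is the genuinely tight case.

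The main obstacle I anticipate is purely the constant-chasing in these two phases — making sure the chosen geometric ratios, the switch-over point at $\abs{S}=\beta d$, and the $C$-deletions all fit under the clean bound $44\log k\log\abs{G}$ — rather than any conceptual difficulty; the expansion hypotheses are tailored to make the argument go through, and $\Delta\ge2$ together with the stated smallness of $\abs{C}$ are precisely the slack that makes it work.
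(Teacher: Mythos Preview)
Your proposal is correct and follows essentially the same approach as the paper: define $A_0=A$, $A_{i+1}=(N(A_i)\cup A_i)\setminus C$, use the first expansion condition together with $\abs{C\cap W}\le(\Delta-2)\abs{A_0}$ to get $\abs{A_{i+1}}\ge 2\abs{A_i}$ while $\abs{A_i}<\beta d$, then use the second condition with $\abs{C}\le\beta d/(20\log k)\le\abs{A_i}/(20\log k)$ to get growth by a factor $1+1/(20\log k)$ thereafter, and conclude once both balls exceed $\abs{G}/2$. The only difference is bookkeeping: the paper absorbs the small-phase steps into the large-phase count by observing that the large phase starts at size $\beta d$ rather than $1$, so the total number of steps from one side is at most $\log_2(\beta d)+\log_{1+1/(20\log k)}\bigl(\abs{G}/(2\beta d)\bigr)\le 22\log k\log\abs{G}$, which gives the constant $44$ cleanly without the slight slack your separate accounting would need.
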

\begin{proof}
Set $A_0=A$ and $A_{i+1}=(N(A_i)\cup A_i)\setminus C$ for each $i$. Since we have that $\abs{A_{i+1}}\geq 2\abs{A_i}$ for $\abs{A_i}<\beta d$, 
and thus $\abs{A_a}\geq \beta d$ where $a=\log_2(\beta d)$. For $i\geq a$ we have 
$\abs{N(A_i)}\geq \frac{1}{10\log k}\abs{A_i}\geq 2\abs{C}$, and so $\abs{A_{i+1}}\geq (1+1/20\log k)\abs{A_i}$. 
Consequently $\abs{A_b}\geq\abs{G}/2$, where $b=a+\log_{1+1/20\log k}(\abs{G}/2\beta d)$. Applying 
$\log(1+x)=-\log(1-\frac{x}{1+x})\geq (1+1/x)^{-1}$ for $x=1/20\log k$, and noting that $1+20\log k\leq 22\log k$, we get 
\[b\leq a+22\log k\log \abs{G}-22\log k\log(2\beta d)\leq22\log k\log \abs{G}.\]
Since the same argument applies to $B$, we have that $A_b\cap B_b\neq\varnothing$, and so there is a path of length at most $2b\leq 44\log k \log\abs{G}$ avoiding $C$, as desired.
\end{proof}
The next ingredient follows from Lemma~\ref{lem:short-path}, and allows us to find a short odd cycle 
while retaining expansion into the rest of the graph.
\begin{lemma}\label{lem:cycle}Suppose $G$ is not bipartite and $(\Delta,\beta,d,k)$-expands into $W\subseteq V(G)$ 
for $\Delta\geq 4$, $k \geq 2$ and $\beta\geq 8\Delta$. Then $G$ contains an odd cycle $C$ with length at most $88\log k\log\abs{G}$, 
such that $G$ also $(\Delta-3,\beta,d,k)$-expands into $W\setminus V(C)$.\end{lemma}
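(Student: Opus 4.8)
\medskip

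\noindent\textit{Proof plan.}
First note that $G$ is connected: otherwise its smallest component $S$ satisfies $1\le\abs{S}\le\abs{G}/2$ and $N_G(S)=\varnothing$, which violates the first expansion condition if $\abs{S}\le\beta d$ and the second otherwise. Since $G$ is connected and not bipartite, running a breadth-first search from an arbitrary vertex $r$ and writing $L_0,L_1,\dots,L_D$ for its layers, some layer $L_\ell$ must contain an edge $xy$ --- otherwise the partition of $V(G)$ into even and odd layers would be a proper $2$-colouring. Taking $\ell$ minimal with this property, the two BFS-tree paths from $r$ to $x$ and to $y$ together with the edge $xy$ form a closed walk of odd length $2\ell+1$, and hence $G$ contains an odd cycle of length at most $2\ell+1\le 2D+1$.

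The crux is to bound the eccentricity $D$ of $r$ by $O(\log k\log\abs{G})$ using the expansion of $G$ into $W$; this is the familiar expander--diameter argument and the numerics are exactly those in the proof of Lemma~\ref{lem:short-path}. Writing $A_i=L_0\cup\dots\cup L_i$ for the ball of radius $i$, we have $N_G(A_i)=L_{i+1}$ and so $\abs{A_{i+1}}=\abs{A_i}+\abs{N_G(A_i)}$; the bound $\abs{N_G(A_i)\cap W}\ge\Delta\abs{A_i}$, valid while $\abs{A_i}\le\beta d$, makes the ball more than double, after which $\abs{N_G(A_i)}\ge\abs{A_i}/(10\log k)$ makes it grow geometrically, so $\abs{A_i}$ exceeds $\abs{G}/2$ within $O(\log k\log\abs{G})$ steps. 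Symmetrically, once $\abs{A_i}>\abs{G}/2$ the complement $B_i=V(G)\setminus A_i$ satisfies $N_G(B_i)=L_i$, and applying the same two inequalities to $B_i$ shows $\abs{B_i}$ shrinks geometrically and reaches $0$ within a further $O(\log k\log\abs{G})$ steps; but $\abs{B_i}=0$ forces $i\ge D$. Keeping track of the constants (and disposing of the trivial case $\beta d>\abs{G}$, in which the ball simply doubles throughout) gives $2D+1\le 88\log k\log\abs{G}$.

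Now let $C$ be a \emph{shortest} odd cycle of $G$, so $\abs{V(C)}\le 88\log k\log\abs{G}$ by the above. I claim every vertex of $G$ has at most $3$ neighbours on $C$. Any chord of $C$ splits it into an odd and an even arc, and the even arc together with the chord is a strictly shorter odd cycle; so $C$ is chordless, whence each vertex of $C$ has exactly $2$ neighbours on $C$. The claim is trivial if $\abs{V(C)}=3$, so assume $\abs{V(C)}\ge5$ and let $w\notin V(C)$ have $t\ge2$ neighbours on $C$; these split $C$ into $t$ arcs of positive length summing to $\abs{V(C)}$. Any odd-length arc together with $w$ is an odd cycle, so by minimality has length at least $\abs{V(C)}$, i.e.\ that arc has length at least $\abs{V(C)}-2$; since the arc lengths sum to an odd number there is at least one odd arc, and two of them would already sum to more than $\abs{V(C)}$, so there is exactly one, of length at least $\abs{V(C)}-2$. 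Then the remaining arcs are even (hence of length at least $2$) and sum to at most $2$, forcing $t\le2$, which proves the claim.

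Finally, we check that $G$ $(\Delta-3,\beta,d,k)$-expands into $W\setminus V(C)$. The second condition of Definition~\ref{def:expand} makes no reference to the target set, so it is inherited verbatim. For the first, take $S\subseteq V(G)$ with $\abs{S}\le\beta d$; since every vertex has at most $3$ neighbours on $C$ we have $\abs{N_G(S)\cap V(C)}\le 3\abs{S}$, and therefore
\[
  \abs{N_G(S)\cap(W\setminus V(C))}\ \ge\ \abs{N_G(S)\cap W}-\abs{N_G(S)\cap V(C)}\ \ge\ \Delta\abs{S}-3\abs{S}\ =\ (\Delta-3)\abs{S},
\]
which is nonnegative as $\Delta\ge4$. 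The one genuinely delicate point is the eccentricity estimate in the second paragraph: the argument itself is routine, but the constants must be tracked carefully enough to stay below $88\log k\log\abs{G}$.
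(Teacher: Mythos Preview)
Your proof is correct and follows the same overall strategy as the paper: take a shortest odd cycle $C$, bound $\abs{C}$ via the diameter using expansion, show every vertex has at most three neighbours on $C$, and deduce the modified expansion into $W\setminus V(C)$. The paper streamlines two of your steps --- it simply invokes Lemma~\ref{lem:short-path} with singleton $A,B$ and empty avoided set for the diameter bound (in place of your BFS-ball growth followed by complement shrinking), and it derives the $3$-neighbour claim from the slicker observation that $\operatorname{dist}_G(x,y)=\operatorname{dist}_C(x,y)$ for $x,y\in V(C)$ (which also immediately yields $\abs{C}\le 2\operatorname{diam}(G)+1$) --- but your arc-splitting argument and complement-shrinking are equally valid alternatives; note only the harmless slip that $N_G(B_i)\subseteq L_i$ rather than equality, which is all your argument uses.
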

\begin{proof}Since $G$ is not bipartite, it contains an odd cycle. Let $C$ be a shortest odd cycle; 
it follows that if $x,y\in V(C)$ then $\operatorname{dist}_G(x,y)=\operatorname{dist}_C(x,y)$. Indeed, if not, choosing $x,y$ to violate this condition with minimal distance, 
the shortest path between $x$ and $y$ is internally disjoint from the cycle, and adding this path to whichever 
path around the cycle has the appropriate parity gives a shorter odd cycle. Consequently $\abs{C}$ is at most twice the diameter of $G$. 
Using Lemma~\ref{lem:short-path} with $A_{\ref*{lem:short-path}},B_{\ref*{lem:short-path}}$ being any singletons and $C_{\ref*{lem:short-path}}=\varnothing$ implies the desired bound 
$\abs{C}\leq 88\log k\log\abs{G}$.

Now, if $C$ is a triangle then any vertex $v$ can have at most 3 neighbours on $C$ (trivially). Otherwise $v$ can have at most two neighbours on $C$; indeed, if $v$ has at least 3 neighbours on $C$ then either $v$ has two neighbours $x,y$ 
with $\operatorname{dist}_C(x,y)\geq 3>\operatorname{dist}_G(x,y)$, contradicting the earlier observation, or $v$ has two neighbours $x,y$ which are adjacent,
giving a shorter odd cycle $vxy$. Thus for any set $S$ we have $\abs{N_G(S)\cap (W\setminus V(C))}\geq \abs{N_G(S)\cap W}-3\abs{S}$, and, 
since $G$ $(\Delta,\beta,d,k)$-expands into $W$, $G$ also $(\Delta-3,\beta,d,k)$-expands into $W\setminus V(C)$.
\end{proof}
Finally, we show that we can construct two large, well-connected disjoint sets centred around any two given vertices.
\begin{lemma}\label{lem:wings}Suppose $G$ is a graph which $(\Delta,\beta,d,k)$-expands into $W\subseteq V(G)$, 
and let $x,y$ be distinct vertices. Then there exist disjoint sets $A,B\subset W\cup\{x,y\}$ of size $\beta d/2$ 
such that $x\in A$, and every vertex of $A$ is connected to $x$ by a path in $A$ of length at most $\log_\Delta(\beta d/2)$, 
and similarly for $B$ and $y$.\end{lemma}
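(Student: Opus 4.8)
\textbf{Proof proposal for Lemma~\ref{lem:wings}.}

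The plan is to build the set $A$ around $x$ by a breadth-first-search type growth process, using the strong expansion of small sets guaranteed by the first bullet of Definition~\ref{def:expand}. Set $A_0=\{x\}$, and having constructed $A_i$ with $x\in A_i\subseteq W\cup\{x\}$ and $\abs{A_i}\le \beta d/2$, let $A_{i+1}=A_i\cup (N_G(A_i)\cap W)$; as long as $\abs{A_i}\le \beta d$ we have $\abs{N_G(A_i)\cap W}\ge \Delta\abs{A_i}\ge \abs{A_i}$, so the process strictly grows (indeed multiplies the size by at least $\Delta+1$, or by $\Delta$ if we only count new vertices) until it first reaches size at least $\beta d/2$. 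Truncate the final layer arbitrarily so that $\abs{A}=\beta d/2$ exactly. Since each vertex added in layer $i+1$ has a neighbour in $A_i$, an easy induction shows every vertex of $A$ is joined to $x$ by a path inside $A$ of length at most the number of layers used, which is at most $\log_\Delta(\beta d/2)$ because the size grows by a factor of at least $\Delta$ per layer.

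To obtain $B$ disjoint from $A$, I would simply run the same growth process for $y$ inside the graph $G-A$ (equivalently, insisting $B_{i+1}\subseteq (W\cup\{y\})\setminus A$). The only thing to check is that expansion is not destroyed by deleting $A$: but $A$ has size $\beta d/2<\beta d$, so for any $S$ with $\abs{S}\le\beta d/2$ we still have $\abs{(N_G(S)\cap W)\setminus A}\ge \Delta\abs{S}-\abs{A}$, which is not obviously enough on its own. The clean fix is to observe that we never need more than $\beta d/2$ vertices total for $B$, and to note that in fact $B$ can be taken to avoid $A$ at essentially no cost: either argue that the growth from $y$ reaches size $\ge \beta d$ using only vertices outside $A$ (since $\abs{N_G(S)\cap W}\ge\Delta\abs{S}$ and $\Delta\ge 1$, one can instead grow until the running set has size $\beta d$, which forces at least $\beta d/2$ of it to lie outside $A$), or, more simply, run the process for $y$ first if $\operatorname{dist}(x,y)$ is small and then grow $x$'s set avoiding it. Since the statement only requires $\abs{A}=\abs{B}=\beta d/2$ and paths of length at most $\log_\Delta(\beta d/2)$, there is slack to absorb the $\abs{A}$ loss.

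The main obstacle is precisely this disjointness bookkeeping: making sure that removing one of the two sets (each of size $\beta d/2$) from consideration does not break the expansion hypothesis needed to grow the other one up to size $\beta d/2$, and that the path-length bound $\log_\Delta(\beta d/2)$ still holds after truncation of the last layer. I expect this to be handled by a symmetric trick — grow both sets simultaneously, at each step adding to each of $A$ and $B$ all new neighbours not already claimed by either set, stopping once both have reached size $\beta d/2$. Because each of $A_i,B_i$ stays below $\beta d$ throughout, each has at least $\Delta\abs{A_i}$ external neighbours in $W$ at every step, and even after subtracting the at most $\beta d/2<\beta d\le\Delta\abs{A_i}$ (once $\abs{A_i}\ge d$, say; the very first steps where the sets are tiny can be handled directly since $x\ne y$ and we can always add at least one new vertex) vertices claimed by the other set, the growth factor remains at least, say, $\Delta/2\ge 1$, which after adjusting constants still gives the required logarithmic depth. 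The remaining verification — that every vertex of $A$ has the claimed path to $x$ — is immediate from the layered construction and requires no computation.
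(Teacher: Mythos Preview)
Your high-level strategy of simultaneous layered growth is exactly what the paper does, and you correctly identify the disjointness bookkeeping as the crux. However, the way you propose to resolve it does not work with the stated constants, and the missing idea is not merely cosmetic.

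Your analysis applies expansion to $A_i$ alone to get $\Delta\abs{A_i}$ neighbours in $W$, and then subtracts whatever the other side has claimed. But the other side may claim up to $\Delta\abs{B_i}=\Delta\abs{A_i}$ of these at the very same step (if the neighbourhoods coincide), leaving nothing. Your fallback, subtracting only $\abs{B}\le\beta d/2$ and asserting a residual growth factor of $\Delta/2$ once $\abs{A_i}\ge d$, requires $(\Delta/2)\abs{A_i}\ge\beta d/2$, i.e.\ $\abs{A_i}\ge\beta d/\Delta$; in the paper's applications one has $\beta\gg\Delta$ (e.g.\ $\Delta=10$, $\beta=M/200$ with $M$ large), so this threshold is not reached until the process is essentially over. ``Adjusting constants'' is not available either: the lemma fixes the target size $\beta d/2$ and the depth bound $\log_\Delta(\beta d/2)$. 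Your earlier suggestion of growing $y$'s set to $\beta d$ and keeping the half outside $A$ fails for a different reason: the surviving vertices need paths to $y$ \emph{inside} $B$, and those paths may pass through $A$.

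The clean fix, which the paper uses, is to apply expansion not to $A_i$ and $B_i$ separately but to their union. Since $\abs{A_i\cup B_i}<\beta d$, one gets $\abs{N(A_i\cup B_i)\cap W}\ge 2\Delta\abs{A_i}$. Writing $A''=(N(A_i)\cap W)\setminus N(B_i)$ and $B''=(N(B_i)\cap W)\setminus N(A_i)$, either one of these already has size $\Delta\abs{A_i}$ (and then the other side can draw freely from its own neighbourhood), or both are small, forcing the common neighbourhood $N(A_i)\cap N(B_i)\cap W$ to have size at least $2\Delta\abs{A_i}-\abs{A''}-\abs{B''}$, which can then be split between the two sides. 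This guarantees disjoint sets $A'\subset N(A_i)\cap W$ and $B'\subset N(B_i)\cap W$ each of size $\Delta\abs{A_i}$, and after discarding the at most $\abs{B_i}=\abs{A_i}$ vertices of $A'$ lying in $B_i$ (and symmetrically) one still gets $\abs{A_{i+1}}\ge\Delta\abs{A_i}$ exactly, preserving the depth bound $\log_\Delta(\beta d/2)$.
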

\begin{proof}We iteratively build up sets $A_i,B_i$ with $\abs{A_i}=\abs{B_i}=\Delta^i$, such that any vertex in $A_i$ 
is connected to $x$ by a path in $A_i$ of length at most $i$, until $i$ is large enough that $\abs{A_i}\geq\beta d/2$. 
We start from $A_0=\{x\},B_0=\{y\}$. Given disjoint sets $A_i,B_i$ with $\abs{A_i}=\abs{B_i}<\beta d/2$, 
we aim to choose disjoint sets $A'\subset N_G(A_i)\cap W$ and $B'\subset N_G(B_i)\cap W$ of size at least $\Delta\abs{A_i}$. 
Then we can pick $A_{i+1}\subseteq A_i\cup (A'\setminus B_i)$ and $B_{i+1} \subseteq B_i \cup (B'\setminus A_i)$ with the required disjointness, connectivity and size properties.

First, take $A''=(N(A_i) \cap W)\setminus N(B_i)$ and $B''=(N(B_i) \cap W)\setminus N(A_i)$. 
If either of these, say $A''$ has size at least $\Delta\abs{A_i}$, then we may choose 
$A'\subseteq A''$ and $B'\subseteq N(B_i) \cap W$ both of size $\Delta\abs{A_i}$ (using the expansion property for the latter); 
by definition of $A''$ these sets are disjoint.

Consequently we may assume $\abs{A''},\abs{B''}<\Delta\abs{A_i}$. Since $\abs{A_i\cup B_i}<\beta d$, 
the expansion property gives $\abs{N(A_i)\cap N(B_i) \cap W}\geq 2\Delta\abs{A_i}-\abs{A''}-\abs{B''}$, 
and so we can add disjoint subsets of $\abs{N(A_i)\cap N(B_i) \cap W}$ to $A''$ and $B''$ to form sets $A',B'$ as required.
\end{proof}

\section{Constructing adjusters}\label{sec:gadgets}
In this section we prove the following result for some suitable constant $C$.

\begin{lemma}\label{lem:sn} 
There exists a constant $C>0$ satisfying the following.
Fix integers $m,k$ with $C\leq m\leq k^{22}$ and $n\geq 8\times 10^{18}mk\log^4k$. Let $H$ be a complete $k$-partite graph on $mk$ vertices, \vspace{0.5mm}
$H' \sqsubseteq H$ be $(s+1)$-partite for some $1\leq s\leq k-1$ and $m_2$ be the size of the second smallest part of $H$.
Let $G$ be a graph of order at least $sn-n/10$ with $\overline{G}$ being $H'$-free, 
and such that any two sets of size at least $m_2+4.1\times 10^{18}\log^4 k$ have at least $4.1\times 10^{18}\log^4 k$ 
disjoint paths between them. 
\begin{itemize}\item If $s\geq 2$, then $G$ contains a cycle of length exactly $n$.
\item If $s=1$ then $G$ contains an $r$-adjuster for some $r\geq 9mk/8$ having length between $0.6n$ and $0.7n$.
\end{itemize}\end{lemma}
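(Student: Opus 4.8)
\emph{Overview.} The plan is to extract from $G$, via Lemma~\ref{lem:expansion}, a subgraph with good expansion, build inside it (and its siblings) a long cycle carrying many adjusters, and --- when $s\ge 2$ --- use the flexibility of those adjusters to pin the cycle length to exactly $n$. For the first step, since $\chi(H')=s+1\le k$ and $\abs{G}\ge(s-\tfrac1{10})n$ with $n\ge 8\times10^{18}mk\log^4k$, we may fix a large absolute constant $\Delta$ and take $\beta=\Theta(\log k)$, $M=\Theta(k)$ so that the numerical hypotheses of Lemma~\ref{lem:expansion} (applied with $H'$ in the role of ``$H$'') hold and $\abs{G}\ge M\abs{H'}\log(s+1)$; the key point is that only $\chi(H')=s+1\le k$, not the original $k$, appears, which leaves ample room. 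This produces an induced subgraph $F\subseteq G$ with $\overline F$ being $H''$-free for some $H''\sqsubseteq H'$ on at least two parts, with $M\abs{H''}\log\chi(H'')-m(H'')\le\abs{F}\le M\abs{H''}\log\chi(H'')\le Mmk\log k$, and with $F$ being $(\Delta,\beta,m(H''),\chi(H''))$-expanding; note $F$ is not bipartite, else $\overline F$ would contain $H''$. The hypothesis $m\le k^{22}$ is used exactly here: it forces $\log\abs{F}=O(\log k)$, so that the odd cycles of Lemma~\ref{lem:cycle} and the paths of Lemma~\ref{lem:short-path} have length $O(\log^2k)$, well within the bound $2000\log k\log(km)$ of Definition~\ref{defn:adjuster}.

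\emph{Building the long cycle with adjusters.} Inside $F$ --- together with further expanding subgraphs obtained by re-applying Lemma~\ref{lem:expansion} to the still-huge unused part of $G$, glued to one another using the connectivity hypothesis --- we construct a cycle $Q$ of length in $[0.6n,0.7n]$; along $Q$ we attach, at $\ge 9mk/8$ well-separated places, short odd cycles (with almost-antipodal distinguished vertices, as in Lemma~\ref{lem:cycle}) connected in by short paths grown from Lemmas~\ref{lem:wings} and~\ref{lem:short-path}. Taking the arcs of $Q$ between consecutive attachment points as the connecting paths $P_i$, this is an $r$-adjuster with $r\ge 9mk/8$ and length in $[0.6n,0.7n]$, so its $2^r$ routes realise $r+1>\abs{H}$ consecutive lengths. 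In total only $O(mk\log^2k)+\abs{Q}\le 0.71n\le\abs{G}$ vertices are used, so room is never an issue. For $s=1$ this adjuster is exactly the required output.

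\emph{From the adjuster to $C_n$ when $s\ge2$.} Now $\abs{G}\ge(2-\tfrac1{10})n$, so we instead build $Q$ of length just below $n$ and then shorten it crudely (replacing a subpath by a short path from Lemma~\ref{lem:short-path}) so that the adjuster's longest route has length in $[n,\,n+O(\log^2k)]\subseteq[n,\,n+9mk/8]$; here we use $m\ge C$, so that $mk\gg\log^2k$. This longest route is a cycle of length in $[n,n+r]$, and switching some of the $r$ short cycles to their shorter traversal decreases its length by any prescribed amount up to $r$; choosing the decrease to be (its length)$\,-\,n$ gives a route, hence a copy of $C_n$ in $G$, of length exactly $n$.

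\emph{Main obstacle.} The crux is manufacturing $\Omega(mk)$ short odd cycles and organising them, together with a cycle of length $\Theta(n)$, into a single adjuster, while the expansion and connectivity available are only of constant or $\operatorname{polylog}(k)$ strength. One cannot hope to preserve a single expander under the required vertex deletions --- Lemma~\ref{lem:cycle} alone costs $3$ in the parameter $\Delta$ for each odd cycle, and we need $\Omega(mk)$ of them --- so expansion has to be re-derived from scratch inside the unused part of $G$ at each stage, and the weak connectivity hypothesis must be spent sparingly, essentially only to pass between the large expanding pieces and to carry the long adjuster-free section; making this accounting work is what both dictates the precise quantitative form of the connectivity hypothesis and forces the slack $r\ge 9mk/8$. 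A secondary, purely computational, obstacle is matching the parameter constraints of Lemma~\ref{lem:expansion} to the explicit threshold $n\ge 8\times10^{18}mk\log^4k$ while keeping $\log\abs{F}=O(\log k)$ --- which is precisely what the assumption $m\le k^{22}$ provides.
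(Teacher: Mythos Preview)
Your outline has the right shape and you correctly identify the central obstacle, but there is a genuine gap at the gluing step that your sketch papers over.

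You say the expanding pieces are ``glued to one another using the connectivity hypothesis'' and that odd cycles are ``connected in by short paths'', but you never explain why these merges preserve almost all of the length and almost all of the short cycles. If you take two disjoint adjusters (or cycles) $F_1,F_2$ and join them by an \emph{arbitrary} pair of disjoint paths, the paths may land on $F_1$ and $F_2$ at nearly coincident points, and the resulting route can discard almost all of both $F_i$: you lose an uncontrolled fraction of the length and of the short cycles. The connectivity hypothesis supplies not two but $q=4.1\times10^{18}\log^4k$ disjoint paths precisely so that one can invoke Erd\H os--Szekeres: among the $q$ paths, some $\Theta(\sqrt q)$ have their endpoints appearing in the same cyclic order on both $F_1$ and $F_2$, and then a pigeonhole over consecutive pairs yields a merge that loses only an $O(q^{-1/2})=O(\log^{-2}k)$ fraction of length and of short cycles (this is the paper's Lemma~\ref{lem:merge-gadgets}). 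Since $O(\log^2k)$ merges are performed, the cumulative loss is a small constant fraction, which is exactly why the surplus $r\ge 9mk/8$ and the connectivity threshold take the precise forms they do. Without this Erd\H os--Szekeres step, your construction simply does not go through: there is no reason the glued object should retain $\Omega(mk)$ short cycles or length $\Theta(n)$.

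A second, smaller problem is your shortening step for $s\ge 2$. You invoke Lemma~\ref{lem:short-path}, but that lemma needs expansion, and the long adjuster-free section you wish to trim does not live inside a single expander; it straddles many expanding pieces together with the connecting paths between them. The paper shortens differently: since $\overline G$ is $H'$-free and $\lvert H'\rvert\le mk$, any $mk$ vertices of $G$ span an edge, so among $mk$ vertices on the section that are pairwise non-adjacent along it, two are adjacent in $G$, giving a chord. Iterating such chords reduces the excess over $n$ to at most $mk+k\le r$, after which the adjuster flexibility finishes.
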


We first show how the expansion properties established in Section \ref{sec:expansion} can be used to construct adjusters.
\begin{lemma}\label{lem:find-gadget}Fix $k \geq 2$, $10^6 \leq m\leq k^{22}$ and $n\geq 8\times10^{5}mk\log^2 k$. 
Let $H$ be a complete $(s+1)$-partite graph of order $m'(s+1)$, where $m' \in \mathbb{R}$, $1\leq s\leq k-1$ and $\abs{H}\leq mk$. 
Let $G$ be a graph with $\overline{G}$ being $H$-free and $\abs{G}\geq sn/10$. Then $G$ contains an $r$-adjuster 
for some $r\geq \frac{mk}{2\times 10^4\log^2 k}$, which has between $mk$ and $mk + (3\times 10^4\log^2) k$ vertices.
\end{lemma}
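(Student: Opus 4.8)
The plan is to first extract an expanding subgraph $F$ from $G$ using Lemma~\ref{lem:expansion}, then build one short cycle at a time inside $F$, linking them up with short paths via Lemma~\ref{lem:short-path}, and iterate until we have collected enough short cycles to form the required $r$-adjuster. First I would apply Lemma~\ref{lem:expansion} with the graph $H$ replaced by our $(s+1)$-partite $H$ (of order $m'(s+1)\le mk$), and with parameters $\Delta,\beta,M$ chosen as absolute constants satisfying the hypotheses of that lemma (e.g. $\Delta$ a few hundred, $\beta\asymp\log k$, $M\asymp\beta\Delta$); the hypothesis $|G|\ge sn/10\ge Mmk\log k$ is guaranteed by the lower bound $n\ge 8\times10^5 mk\log^2 k$. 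This yields $H'\sqsubseteq H$ induced by at least two parts, and an induced subgraph $F\subseteq G$ with $\overline F$ being $H'$-free, with $|F|$ of order $M|H'|\log\chi(H')=\Theta(mk\log k)$, and crucially $F$ is $(\Delta,\beta,m(H'),\chi(H'))$-expanding. Write $d=m(H')$ and $\ell=\chi(H')\le s+1\le k$; note $\beta d\ge d\ge 1$ is not necessarily large, but the term $\beta d$ plays the role of the "small set" threshold throughout.

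The core of the argument is an iterative construction. Suppose we have already built $i$ disjoint short odd cycles $C_1,\dots,C_i$ together with connecting paths $P_1,\dots,P_{i-1}$, all of bounded total size $O(i\log k\log|F|)=O(i\log^2 k)$ (using $\log|F|=O(\log k)$ from the control on $|F|$). As long as this total is still at most, say, $\beta d m/(40\log k)$ — equivalently as long as $i$ is not yet $\Omega(mk/\log^2 k)$ — the removal of all these used vertices keeps $F$ expanding with the expansion parameter only slightly reduced (here $F$ remains non-bipartite because otherwise $\overline{F}$ would contain a large complete bipartite graph, contradicting $H'$-freeness once $|F|$ is large enough). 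Then Lemma~\ref{lem:cycle} produces a new short odd cycle $C_{i+1}$ of length $O(\log k\log|F|)=O(\log^2 k)$ in the remaining graph, and Lemma~\ref{lem:short-path} (with $C_{\ref*{lem:short-path}}$ the set of all previously used vertices, which is small, and $A,B$ singletons near $w_i$ on $C_i$ and $v_{i+1}$ on $C_{i+1}$) produces the connecting path $P_i$ of length $O(\log^2 k)$ avoiding everything used so far. One must be slightly careful to choose the distinguished almost-antipodal vertices $v_{i+1},w_{i+1}$ on $C_{i+1}$ and to finally close up the cycle by joining $w_r$ back to $v_1$, again by Lemma~\ref{lem:short-path}; to guarantee those endpoints lie in well-connected sets one can invoke Lemma~\ref{lem:wings} around the relevant vertices. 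The short-cycle length bound $|C_i|\le 2000\log k\log(km)$ required by Definition~\ref{defn:adjuster} follows since $|F|\le M|H'|\log\ell\le Mmk\log k$, so $\log|F|\le\log(Mmk\log k)=O(\log k+\log(km))$, which is absorbed into the constant.

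Finally I would bookkeep the counting. We run the iteration for exactly $r:=\lceil mk/(2\times10^4\log^2 k)\rceil$ steps; at each step the total number of used vertices grows by $O(\log^2 k)$, so after $r$ steps the adjuster has $\Theta(mk)$ vertices, and by tuning constants this lies between $mk$ and $mk+(3\times10^4\log^2)k$ — this is the arithmetic one has to check, choosing the per-step contribution and the value of $r$ so that the running total first exceeds $mk$ at the right moment, possibly padding the last connecting path to hit the window exactly. Throughout, the expansion hypothesis must remain valid, which forces the total used size to stay below $\beta d m/(40\log k)$ or so; since $d\ge 1$ and $\beta=\Theta(\log k)$, the budget is $\Omega(m)$, comfortably exceeding the $\Theta(mk\cdot\text{poly}\log k / (\text{poly}\log k))=\Theta(mk)$... wait — here the subtlety is that the budget is only $\Omega(\beta d m/\log k)=\Omega(m)$ while the adjuster has $\Theta(mk)$ vertices, so in fact we need the expansion threshold $\beta d$ itself to be large, i.e. we need $d=m(H')$ large.

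I expect the main obstacle to be precisely this point: ensuring the "small set" threshold $\beta d=\beta m(H')$ in the expansion of $F$ is large enough — on the order of $mk/\log^2 k$ — so that removing all the vertices used by the partially-built adjuster does not destroy expansion. Since $m(H')=|H'|/\chi(H')$ and $|H'|$ could be as small as a constant times $\chi(H')$ (very unbalanced or few parts), one may need to first argue, using the $H'$-freeness of $\overline F$ and the size $|F|=\Theta(mk\log k)$, that $H'$ must actually be reasonably large — or alternatively to re-run Lemma~\ref{lem:expansion} choosing $\Delta,\beta$ depending on the situation, or to iterate the reduction $H'\sqsubset H$ until the average part size is comparable to $m$. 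Managing this interaction between the possible imbalance of $H$, the inductive reduction to $H'$, and the quantitative expansion threshold is the delicate part; the path- and cycle-finding steps themselves are routine given Lemmas~\ref{lem:short-path}, \ref{lem:cycle} and~\ref{lem:wings}.
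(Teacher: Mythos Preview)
You have correctly identified the crux of the difficulty, but your plan does not resolve it, and the workarounds you sketch are not quite the right ones. Two concrete gaps:

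\textbf{(1) The parameters in the call to Lemma~\ref{lem:expansion}.} Taking $M,\beta,\Delta$ as absolute constants (or $\beta\asymp\log k$) is exactly what causes the budget problem you noticed: the ``avoid'' capacity $\beta d/(20\log k)$ in Lemma~\ref{lem:short-path} is then governed by $d=m(H')$, which can be tiny. The fix is not to argue that $H'$ must be large, but to choose $M$ depending on $m'$: the paper takes $M=(10^4 k\log k)\cdot\frac{2m}{m'}$ and $\beta=M/200$, so that $\beta m'\asymp mk\log k$ regardless of how small $m'$ (or $m(H')\ge m'$) is. With this choice, the set $Y$ of already-used adjuster vertices satisfies $\abs{Y}\le mk+O(\log^2 k)\le \beta m'/(20\log k)$ throughout, and Lemma~\ref{lem:short-path} can always route around it. The hypothesis $\abs{G}\ge sn/10\ge Mm'(s+1)\log(s+1)$ still holds because the $m'$ cancels.

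\textbf{(2) Finding the next odd cycle.} You propose to apply Lemma~\ref{lem:cycle} repeatedly inside the single expander $F$, but that lemma only drops $\Delta$ by $3$ and retains expansion of the \emph{same} graph into $W\setminus V(C)$; it does not give you an expander on $F-V(C_1)$, so you cannot iterate it to produce many disjoint odd cycles (you would need $\Delta\gtrsim 3r$). The paper instead re-invokes Lemma~\ref{lem:expansion} on $G'-X$ at every step to extract a \emph{fresh} expander $G''\subseteq G'-X$ in which to find the next $C_i$ (and wings $A_i,B_i$ of size $\beta m'/2$ via Lemma~\ref{lem:wings}). The connecting paths are then found using the expansion of the original $G'$, with $A_{\ref*{lem:short-path}},B_{\ref*{lem:short-path}}$ taken to be these large wing-sets rather than singletons, and $C_{\ref*{lem:short-path}}=Y$. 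The unused portions of old wings are released from $X$ after each link, keeping $\abs{X}\le \abs{Y}+\beta m'\le\tfrac14 Mm''s'\log s'$, which is what allows the re-application of Lemma~\ref{lem:expansion}.

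In short, the iterative linking via Lemmas~\ref{lem:short-path} and~\ref{lem:wings} is as you describe, but the two missing mechanisms---scaling $M$ with $1/m'$, and re-running Lemma~\ref{lem:expansion} at each step---are precisely what makes the bookkeeping close.
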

\begin{proof}First note that we may assume, at the cost of replacing the upper bound with $\abs{H}\leq 2mk$, 
that every part of $H$ has order at least $m'$. Indeed, we may replace each part of $H$ with order below $m'$ 
with a part of order $\lceil m'\rceil$; this adds at most $sm'$ vertices, and the new graph has average 
part size at most $2m'$. In particular, this means that if $H'\sqsubseteq H$ is $a$-partite 
of order $ab$ then $b\geq m'$ and $ab\leq 2m'(s+1)\leq 2mk$. Clearly, replacing $H$ with a supergraph preserves 
the property that $\overline{G}$ is $H$-free. 

Set 
\[M=(10^{4}k\log k) \times \frac{2m}{m'}\quad\text{ and }\quad \beta=M/(200)\geq 100.\]
Then, as $1 \leq s \leq k-1$, we get
\[\abs{G}\geq 8\times10^{4}smk\log^2 k\geq M(2m'(s+1))\log (s+1).\] Further, note that $M \geq 4(s+1)$ and $\beta \geq 10 \log k$.
We use Lemma~\ref{lem:expansion} with $(M, \beta, \Delta, m, k)_{\ref*{lem:expansion}} = (M, \beta, 10, m', s+1)$ to pass to 
a subgraph $G'$ where $\overline{G'}$ is $H'$-free for some $H'\sqsubseteq H$ and which $(10,\beta,m'',s')$-expands 
into $V(G')$, where  $\abs{H'}=s'm''$ and $\chi(H')=s'$ with $2 \leq s' \leq s+1$, such that \begin{equation}\label{eq:G'}Mm''s'\log s' - m''\leq\abs{G'}\leq Mm''s'\log s'.\end{equation}
In particular, since $m''\geq m'$, we have that $G'$ $(10,\beta,m',s')$-expands. 

As $\overline{G'}$ is $H'$-free, $G'$ has no independent set of order $\abs{H'} = s'm''$, and it follows that $\chi(G')\geq\abs{G'}/(s'm'')>2$. Note that as $m \leq k^{22}$, from \eqref{eq:G'} we have that $\abs{G'} \leq M (2m'(s+1)) \log k \leq 10^{5}mk^2 \log^2 k \leq 10^{5}k^{26}$.
Thus, by Lemma~\ref{lem:cycle} we may find an odd cycle $C_1$ of length at most $88\log k \log\abs{G'}\leq 10^4\log^2 k$, 
such that we retain $(7,\beta,m',s')$-expansion into $V(G')\setminus V(C_1)$. Choosing almost-antipodal points $v_1,w_1$ on $C_1$ (\ie $\operatorname{dist}_{C_1}(v_1,w_1) = (\abs{C_1} - 1)/2$), 
we use Lemma~\ref{lem:wings} to find sets $A_1$ and $B_1$ of size $\beta m'/2$ with 
$A_1\cap B_1=\varnothing$, $A_1\cap V(C_1)=\{v_1\}$ and $B_1\cap V(C_1)=\{w_1\}$, 
with each vertex in $A_1$ (resp. $B_1$) having a path in $A_1$ to $v_1$ (resp. in $B_1$ to $w_1$) of length at most $\log_{7}(\beta m'/2)<40\log^2 k$.

Next we set $X=A_1\cup V(C_1)\cup B_1$ and $Y=V(C_1)$. 
Note that, by \eqref{eq:G'} and that $\abs{X} \leq \beta m' + 10^4 \log^2 k$, we have
$\abs{G' - X}\geq \frac{1}{2}Mm''s'\log s'$. 
We apply Lemma~\ref{lem:expansion} with $(M, \beta, \Delta, m, k)_{\ref*{lem:expansion}} = (M/2, \beta, 10, m'', s')$  again to find a subgraph $G''$ with 
$V(G'')\subset V(G')\setminus X$ which $(10,\beta, m''',s'')$-expands and has order at most $\frac{1}{2}Mm'''s''\log s''$, 
for some $2 \leq s''\leq s'$ and $m'''\geq m'$. 
Within $G''$ we find another similar structure $A_2\cup C_2\cup B_2$, 
and since $G'$ $(10, \beta, m', s')$-expands we can use Lemma~\ref{lem:short-path}, with parameters $(\Delta, \beta, d, k, A, B, C, W)_{\ref*{lem:short-path}} = (10, \beta, m', s', A_2, B_1, V(C_1 \cup C_2), G')$ to find a path $Q_1$ between $A_2$ and $B_1$ 
that avoids $V(C_1\cup C_2)$ and is of length at most $5000\log^2 k$. We may assume $Q_1$ is disjoint from $A_1\cup B_2$ 
since otherwise we may find a shorter path between (say) $A_1$ and $A_2$, then relabel appropriately. 
We then extend $Q_1$ by paths within $A_2$ to $v_2$ and within $B_1$ to $w_1$ to obtain a $w_1$-$v_2$ path $P_1$ of length at most $10^4\log^2 k$.

Now we update $X$ to $A_1\cup V(C_1\cup P_1\cup C_2)\cup B_2$ (note that the unused parts of $A_2\cup B_1$ are released) 
and $Y$ to $V(C_1\cup P_1\cup C_2)$. As before, we choose $G'''$ with $V(G''')\subset V(G')\setminus X$ such that $G'''$ $(10,\beta, m^{(4)},s''')$-expands 
for some $2 \leq s'''\leq s'$ and $m^{(4)}\geq m'$ and continue as before, updating $X$ and $Y$.
Observe that we can continue this process as long as $\abs{G' - X} \geq  \frac{1}{2}Mm''s'\log s'$. Thus, since $\abs{G'} \geq Mm''s'\log s' - m''$, we can continue whenever $\abs{X} \leq \frac{1}{4}Mm''s'\log s'$. Since $mk + \beta m' \leq \frac{1}{4}Mm''s'\log s'$, we can continue the process until $\abs{Y}>mk$, as $\abs{A_i}, \abs{B_i} \leq \beta m'/2$ for all $i$. Moreover, at each stage of this process $\abs{Y}$ increases by at most $2\times 10^4\log^2 k$. Thus as soon as $\abs{Y}>mk$ we can stop the process and ensure $\abs{Y} \leq mk + (2\times 10^4)\log^2 k$
at this point. Since $mk + 2\times 10^4\log^2 k \leq \beta m'/20\log (s+1) \leq \beta m'/20\log s'$, we can then apply Lemma~\ref{lem:short-path} with $(\Delta, \beta, d, k, A, B, C, W)_{\ref*{lem:short-path}} = (10, \beta, m', s', A_1,  B_r, Y, G')$ to find a path of length at most $10^4\log^2 k$ connecting the two remaining large sets $A_1,B_r$ avoiding $Y$, 
which closes up the structure into an $r$-adjuster; since each cycle $C_i$ and each path $P_i$ built in this whole process has length at most $10^4\log^2 k$ 
we have $r \geq \frac{mk}{2 \times 10^4\log^2 k}$, and the adjuster contains all of $Y$, giving the required bounds.
\end{proof}

We will also need some long cycles to extend the adjusters we construct.
\begin{lemma}\label{lem:long-cycle} Fix $k \geq 2$, $10^6 \leq m\leq k^{22}$ and $n\geq 8\times10^{5}mk\log^2 k$. 
Let $H$ be a complete $(s+1)$-partite graph of order $m'(s+1)$, where $m' \in \mathbb{R}$, $1\leq s\leq k-1$ and $\abs{H}\leq mk$. 
Let $G$ be a graph with $\overline{G}$ being $H$-free and $\abs{G}\geq sn/10$. 
Then $G$ contains a cycle of length between $n/((8\times 10^5)\log^2 k)$ and $n/((8\times 10^5)\log^2 k) + 2mk$.
\end{lemma}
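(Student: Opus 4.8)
The plan is to mirror the proof of Lemma~\ref{lem:find-gadget}, but to build a much larger adjuster — one whose longest route has length roughly $L_0:=n/((8\times10^5)\log^2 k)$ rather than roughly $mk$ — and then to read off a route of the required length.

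First I would carry out the same preliminary reduction as in Lemma~\ref{lem:find-gadget}: by replacing each part of $H$ of order below $m'$ with a part of order $\lceil m'\rceil$, we may assume every part has order at least $m'$, at the cost of $\abs{H}\le 2mk$. Then I would set $\beta=10\log k$ and choose a parameter $M$, this time essentially $M=\Theta(L_0)$ rather than the $\Theta\big(k\log k\cdot m/m'\big)$ of Lemma~\ref{lem:find-gadget}, large enough that $M$ meets the numerical hypotheses of Lemma~\ref{lem:expansion} and also $M\cdot 2m'\log 2-mk\ge 4L_0$. Such an $M$ exists and still satisfies $\abs{G}\ge M\abs{H}\log k$, since $\abs{G}\ge sn/10=8\times10^4\,s\log^2 k\cdot L_0$ dwarfs every relevant quantity. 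Applying Lemma~\ref{lem:expansion} then yields an induced subgraph $G'\subseteq G$ with $\overline{G'}$ being $H'$-free for some $H'\sqsubseteq H$ on at least two parts, which $(10,\beta,m(H'),\chi(H'))$-expands and satisfies $4L_0\le\abs{G'}\le M\cdot 2mk\log k$; and, exactly as in Lemma~\ref{lem:find-gadget}, $G'$ is not bipartite because $\overline{G'}$ has no independent set of order $\abs{H'}$.

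Next I would run the iterative construction from the proof of Lemma~\ref{lem:find-gadget} essentially verbatim: repeatedly invoke Lemma~\ref{lem:cycle} to find a short odd cycle retaining expansion, Lemma~\ref{lem:wings} to grow a pair of well-connected sets around its almost-antipodal pair, Lemma~\ref{lem:expansion} again in the leftover to refresh expansion, and Lemma~\ref{lem:short-path} to link consecutive pieces by short paths. Each step enlarges the vertex set used by the structure — and hence the length of its longest route — by at most $O(\log k\log\abs{G'})$, and the process may be continued while the used vertices number at most a constant fraction of $\abs{G'}$. Since $\abs{G'}\ge 4L_0$, I would run the construction until the longest route of the current structure first attains length at least $L_0$, obtaining an adjuster $\mathcal A$ with $r$ short cycles, routes of $r+1$ consecutive lengths $\ell-r,\dots,\ell$, and $\ell\ge L_0$.

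Finally, to extract the cycle: since we stopped as soon as the longest route reached $L_0$, we have $\ell\le L_0+O(\log k\log\abs{G'})$. If this increment is at most $2mk$, the longest route of $\mathcal A$ is already a cycle of length in $[L_0,L_0+2mk]$. Otherwise — a regime that can only occur when $\abs{G'}$, equivalently $n$, is enormous — I would use that $\mathcal A$ realises every length in the interval $[\ell-r,\ell]$: since $r\ge\ell/\big(O(\log k\log\abs{G'})\big)$ while $\abs{G'}$ is only polynomial in $L_0,m,k$ (using $m\le k^{22}$), a short calculation gives $\ell-r\le L_0+2mk$, so some route of $\mathcal A$ has length exactly in $[L_0,L_0+2mk]$, as required. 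I expect this last point — pinning the cycle length down to within the additive slack $2mk$ — to be the only real obstacle, since the per-step increment of the adjuster construction need not be below $2mk$; the resolution leans on the length flexibility inherent in adjusters, namely that their routes cover a whole interval of integers ending at the adjuster's length.
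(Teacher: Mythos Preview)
Your proposal has a concrete gap: the choice $\beta=10\log k$ is far too small to sustain the iterative construction up to length $L_0$. Each linking step invokes Lemma~\ref{lem:short-path}, whose hypothesis $\abs{C}\le \beta d/(20\log k)$ must hold for $C=Y$, the structure built so far. With $\beta=10\log k$ this caps $\abs{Y}$ at roughly $d/2\le m(H')\le mk$, whereas you need $\abs{Y}$ to reach $L_0=n/((8\times 10^5)\log^2 k)$, and there is no upper bound on $n$ in terms of $mk$. The companion hypothesis $\abs{C\cap W}\le(\Delta-2)\min\{\abs{A},\abs{B}\}$ fails for the same reason, since your wing sets have size only $\beta d/2=O(mk\log k)$. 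So the process stalls long before the longest route reaches $L_0$; this is not the ``only real obstacle'' you anticipated but a prior one. The remedy is to let $\beta$ scale with $M$ (as in Lemma~\ref{lem:find-gadget}, where $\beta=M/200$), so that $\beta d/(20\log k)$ is a constant multiple of $L_0$.

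The paper proceeds differently on both points. It sets $M=n/(10m'\log(s+1))$ and $\beta=M/200$, giving $\beta m'/(20\log k)\gtrsim 20L_0$, so the iteration runs as far as needed. More notably, it dispenses with adjuster flexibility altogether: at each stage it grows wings around a single \emph{edge} rather than a short odd cycle, so the linked structure is just a long path, and closing it yields a plain cycle. The upper bound on the length is then achieved by truncation: if the cycle is too long, choose $mk\ge\abs{H}$ vertices that are pairwise non-adjacent along it; since $\overline G$ is $H$-free these cannot be independent in $G$, so some chord exists and the cycle can be shortened by an amount in $[1,2mk-3]$. Iterating lands the length in $[L_0,L_0+2mk]$. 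This is cleaner than your endgame: it avoids both the non-bipartiteness verification at each refresh and the quantitative comparison of $r$ against the per-step increment.
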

\begin{proof} We proceed as in Lemma~\ref{lem:find-gadget}, with the following differences. First, set
\[M=\frac{n}{10m'\log(s+1)}\geq\frac{n}{10m'\log k}\quad\text{and}\quad\beta=M/200.\] 
Secondly, at each stage rather than finding a cycle with two large sets, each of size $\beta m'/2$, attached we find a single edge with two large sets. 
This ensures that after each joining step we have a path with two large sets attached. 
Again, we continue until the total size of the path exceeds $\beta m'/40\log k$, and then join the two large sets, using Lemma~\ref{lem:short-path}, to obtain a desired long cycle. If this cycle is longer than our desired upper bound, we truncate it using that $\overline{G}$ is $H$-free (choose $mk$ vertices that are non-adjacent on the cycle).
\end{proof}

In order to complete the proof of Lemma~\ref{lem:sn}, we need to be able to join adjusters together while simultaneously retaining 
most of the flexibility of each adjuster and most of the length. This resembles Lemma~2.19 of \cite{PS20}, 
but the fact that in our case the length of an adjuster and the number of vertices are only loosely related creates extra difficulties.
\begin{lemma}\label{lem:merge-gadgets}Let $F_i$ be an $r_i$-adjuster of length $\ell_i$ for $i\in\{1,2\}$, 
with $F_1$ and $F_2$ disjoint. Suppose there are $s>16$ vertex-disjoint paths $P_1,\ldots,P_s$ between 
$F_1$ and $F_2$, each of length at most $t$. Then for some $i\neq j$ there is an $r$-adjuster of length $\ell$ 
contained in $F_1\cup F_2\cup P_i\cup P_j$, where
\[(r_1+r_2)(1-4s^{-1/2})-4\leq r\leq r_1+r_2\]
and
\[(\ell_1+\ell_2)(1-4s^{-1/2})\leq\ell\leq\ell_1+\ell_2+2t.\]
Furthermore, if $r_2=0$ then the adjuster obtained contains a section of length $\ell-\ell_1$ with no short cycles.
\end{lemma}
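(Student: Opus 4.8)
The plan is to reduce the problem to finding two well-placed connecting paths among the $s$ available, such that attaching $F_1$ to $F_2$ along them keeps most of the short cycles of each adjuster intact. First I would recall what an adjuster looks like: $F_i$ consists of short odd cycles $C^{(i)}_1,\ldots,C^{(i)}_{r_i}$ linked by internal paths $P^{(i)}_1,\ldots,P^{(i)}_{r_i}$ into one big cyclic structure, and the routes are obtained by choosing, for each short cycle, one of its two almost-antipodal arcs. The key structural observation is that each connecting path $P_a$ enters $F_1$ at a single vertex $x_a$ and $F_2$ at a single vertex $y_a$; the vertex $x_a$ lies either on some short cycle of $F_1$ or on one of its linking paths (or possibly at a distinguished vertex $v_j,w_j$), and similarly for $y_a$. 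To merge the two adjusters I want to pick indices $a\neq b$, cut the cyclic backbone of $F_1$ at two points near $x_a$ and $x_b$, cut $F_2$ similarly near $y_a,y_b$, and splice the four resulting arcs together with $P_a$ and $P_b$ into a single cyclic structure; the short cycles lying on the two discarded arcs of $F_1$ and the two discarded arcs of $F_2$ are lost, but everything else survives as short cycles of the new adjuster.

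The quantitative heart is choosing $a,b$ so that few short cycles are discarded. Here I would use an averaging/pigeonhole argument of the type behind Theorem~\ref{thm:ES} (or just a direct counting bound): think of the $s$ attachment points $x_1,\ldots,x_s$ as dividing the cyclic backbone of $F_1$ into $s$ arcs; at least one of these arcs contains at most $r_1/s$ short cycles, but we need to be cleverer since cutting at $x_a,x_b$ discards the short cycles in \emph{one} of the two arcs between them, and we want to also control $F_2$ simultaneously. The trick is to order the attachment points $x_1,\dots,x_s$ cyclically around $F_1$; this induces a permutation of how $y_1,\dots,y_s$ sit around $F_2$. By a result in the spirit of Erd\H{o}s--Szekeres, among $s$ points we can find a subset of size about $\sqrt{s}$ that is ``monotone'' in the sense that the cyclic order on $F_1$ and the cyclic order on $F_2$ agree (up to reflection). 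Restricting attention to this monotone subset of size $\geq\sqrt s - 1$, we can now choose $x_a,x_b$ to be cyclically adjacent \emph{within this subset} on both $F_1$ and $F_2$ simultaneously; then the short arc cut out of $F_1$ spans at most a $1/\sqrt s$ fraction of $F_1$'s backbone, hence at most roughly $r_1/\sqrt s + O(1)$ short cycles, and likewise for $F_2$. Summing the losses from the two cuts on each adjuster gives the discarded count $\leq 4(r_1+r_2)s^{-1/2}+O(1)$, which yields the stated lower bound $r\geq(r_1+r_2)(1-4s^{-1/2})-4$; the upper bound $r\leq r_1+r_2$ is clear since no new short cycles are created. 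For the length bounds, note each route of the merged adjuster corresponds to a route of $F_1$ restricted to the surviving arcs, plus a route of $F_2$ restricted to its surviving arcs, plus the two paths $P_a,P_b$; the longest surviving route has length at least $\ell_i$ minus the length contributed by the discarded arcs, which is at most a $4s^{-1/2}$ fraction of $\ell_i$ by the same monotone-subset choice (arcs chosen adjacent in the subset are short in both the backbone-length and the route-length metric), and at most $\ell_1+\ell_2+2t$ since we add two paths of length $\leq t$. For the ``furthermore'' clause, when $r_2=0$ the adjuster $F_2$ is just a single cycle of length $\ell_2$; cutting it at two attachment points $y_a,y_b$ leaves an arc of length $\geq\ell_2 - (\text{discarded})$ which contains no short cycles at all, and this arc becomes the claimed adjuster-free section of length $\ell-\ell_1$ (one has to check the bookkeeping that $\ell - \ell_1$ is exactly the surviving-arc contribution from $F_2$ plus the connecting paths).

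Some care is needed with two technical nuisances. The first is that an attachment point $x_a$ might coincide with a distinguished vertex $v_j$ or $w_j$, or lie in the interior of a linking path $P^{(1)}_j$ rather than on a short cycle; in all these cases cutting near $x_a$ is even easier (we cut the linking path, or split at the distinguished vertex, losing at most one extra short cycle per cut, which is already absorbed into the $O(1)$ term). The second nuisance is parity: the merged backbone must still be traversable so that each short cycle contributes an independent $\pm1$ choice, i.e. the gluing must not accidentally force some arcs. This is automatic because we only ever cut the backbone and reconnect with new paths — the cyclic structure and the local picture at each surviving short cycle are unchanged, so the $2^r$ routes of $r$ consecutive lengths property is inherited directly from the definition.

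The main obstacle I expect is the simultaneous control of both adjusters' cut losses: naively optimising the cut on $F_1$ destroys control on $F_2$. The monotone-common-ordering step (the Erd\H{o}s--Szekeres-type extraction of a $\sqrt s$-sized subset on which the two cyclic orders are compatible) is what makes everything work, and getting the constant $4$ and the additive $-4$ to come out exactly right requires carefully accounting for: the two cuts per adjuster (factor $2$), the worst-case short arc within the extracted subset being a $2/\sqrt s$ fraction rather than $1/\sqrt s$ (because ``adjacent in a size-$\sqrt s$ subset'' of a cyclic order of length $s$ spans up to roughly $s/\sqrt s$ original gaps — actually $\sqrt s$ gaps, each of which could contain up to about $r_1/s$ cycles, total $r_1/\sqrt s$), and the $O(1)$ slack from boundary short cycles straddling a cut point. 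I would set up the counting so these combine transparently into the claimed bounds.
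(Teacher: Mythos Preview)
Your overall strategy---extract a monotone subset via Erd\H{o}s--Szekeres, then pick a consecutive pair so that the discarded arcs are small---is exactly the paper's approach. However, there is one genuine gap and one imprecise step.

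\textbf{The missing route-selection step.} You repeatedly speak of ``the cyclic backbone of $F_1$'' and of ordering the attachment points ``cyclically around $F_1$''. But an $r_1$-adjuster has no single backbone: it has $2^{r_1}$ routes, and an attachment point $x_a$ lying on the \emph{shorter} arc of some short cycle does not lie on the longest route at all. If two attachment points sit on opposite arcs of the same short cycle, no single route contains both, and your cyclic order is undefined. The paper fixes this with a preliminary step you omit: label each path $P_a$ by a pair in $\{+,-\}^2$ recording whether $x_a$ (resp.\ $y_a$) lies on the longer or shorter arc of its short cycle, and pass to the $\geq s/4$ paths with a common label. This guarantees a single route $C_1$ in $F_1$ and $C_2$ in $F_2$ containing all surviving endpoints, and only then does the cyclic ordering make sense. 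Your ``technical nuisance'' paragraph does not address this; the issue is not that $x_a$ might coincide with a distinguished vertex, but that different $x_a$'s may force incompatible route choices.

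\textbf{The averaging is not what you wrote.} You assert that if $x_a,x_b$ are adjacent in the monotone subset of size $\sim\sqrt{s}$, then the arc between them ``spans at most a $1/\sqrt{s}$ fraction of $F_1$'s backbone''. This is false for a specific pair: the monotone subset could be clustered, with one huge gap. What is true---and what the paper does---is that the arcs between consecutive pairs are \emph{pairwise disjoint}, so summing over all $\sim\sqrt{s}/2$ consecutive pairs the total loss is at most $r_1+r_2$ (resp.\ $\ell_1+\ell_2$); hence fewer than $s^{1/2}/4$ pairs lose more than $4s^{-1/2}(r_1+r_2)$ short cycles, and fewer than $s^{1/2}/4$ lose more than $4s^{-1/2}(\ell_1+\ell_2)$ in length, leaving at least one good pair. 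Your final paragraph gestures at counting but the reasoning there (``$\sqrt{s}$ gaps, each of which could contain up to about $r_1/s$ cycles'') again assumes uniformity that need not hold. Replace it with the disjointness-plus-pigeonhole argument and the constants $4$ and $-4$ fall out directly (the additive $-4$ accounts for the up to four short cycles only partially contained in the discarded arcs).
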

\begin{proof}We may assume each $P_i$ is internally disjoint from $F_1\cup F_2$, since otherwise we can replace 
it by a shorter path with this property. Write $x_i$ for the endpoint of $P_i$ in $F_1$, and $y_i$ for the endpoint in $F_2$. 

We will progressively reduce the number of paths to focus our attention on at least $s^{1/2}/4$ paths which relate 
to one another in a useful way. First, we give each path $P_i$ an ordered pair of labels from $\{+,-\}$ as follows. 
If $x_i$ lies on one of the $r_1$ short cycles of $F_1$, choose the first label to be $+$ if it is on the longer section of 
that cycle (that is, the longer section between the two vertices of that short cycle that have degree $3$ in $F_1$), and $-$ if it is on the shorter section. If $x_i$ lies on a path between cycles, choose the first label arbitrarily. 
Choose the second label similarly with respect to $y_i$. Now we may find a set of at least $s/4$ paths with identical label pairs. 
Note that this ensures there are routes $C_1$ around $F_1$ and $C_2$ around $F_2$ which cover all the $x_i$ and $y_i$ 
for $P_i$ in this set of paths, with $\abs{C_i}\in[\ell_i-r_i,\ell_i]$. In what follows we consider $C_1$ and $C_2$ to be oriented in a particular direction around $F_1$ and $F_2$, respectively. Renumbering, if necessary, we may assume these paths 
are $P_1,\ldots,P_{\ceil{s/4}}$, and that $x_1,\ldots,x_{\ceil{s/4}}$ are in order of their appearance in $C_1$. 
By the Erd\H{o}s--Szekeres Theorem (Theorem \ref{thm:ES}), there is a subset of at least $t\geq\left\lceil\sqrt{s/4}\right\rceil\geq 2$ 
paths with endvertices $y_i$ either in order of their appearance in $C_2$ or in reverse order. 
Again, by renumbering and reversing $C_2$ if necessary we may assume these are $P_1,\ldots,P_{t}$ and the $y_i$ appear in reverse order.

Let $C'_1$ be the longest route in $F_1$, and associate each vertex $x_i$ for $i\in[t]$ with a vertex $x'_i$,
chosen as follows. If $x_i$ is internal to a shorter side of a short cycle, let $x'_i$ be the vertex on the longer side 
which is the same distance from the start vertex of the cycle (consequently, $x'_i$ is one step further from the end of 
the cycle than $x_i$). If $x_i$ is on one of the paths or is internal to the longer side of a short cycle, set $x'_i=x_i$. 
Define $C'_2$ and each $y_i'$ similarly. 

Now we show that some consecutive pair of paths work. Consider making a new adjuster from $F_1$, $F_2$, $P_i$ and 
$P_{i+1}$ for $i\in[t]$ (we take subscripts modulo $t$) as follows: starting from $x_i$, traverse $P_i$ to $y_i$, 
then follow a route around $F_2$ to $y_{i+1}$, including both sections of any short cycle traversed in this route (apart from any short cycle containing $y_i$ or $y_{i+1}$), 
traverse $P_{i+1}$ to $x_{i+1}$, then follow a route around $F_1$ to $x_i$ similarly. See Figure \ref{fig:gadget-merge} for an example.

Write $S_1$ for the edges of $C'_1$ between $x'_i$ and $x'_{i+1}$ and $S_2$ for the edges of $C'_2$ between 
$y'_{i+1}$ and $y'_{i}$. The short cycles of the new adjuster are those of the original two adjusters except for any 
which intersect $S_1$ or $S_2$. The longest route through the new adjuster uses all of $C'_1$ except for at most $\abs{S_1} + 1$ edges (the $+1$ added if $x'_{i+1}\neq x_{i+1}$), and all of $C'_2$ except for at most $\abs{S_2} + 1$ edges (the $+1$ added if $y'_{i+1}\neq y_{i+1}$). It also uses all of the two paths $P_i$ and $P_{i+1}$, which contain at least one edge each, so its length is at least
$\ell_1+\ell_2-\abs{S_1}-\abs{S_2}$. Note that the sets $S_1\cup S_2$ obtained for different choices of $i\in[t]$ are disjoint.

\begin{figure}[ht]
\begin{tikzpicture}
\draw (0,0) arc [start angle=-90, end angle=90, radius=0.6];
\draw (0,0) arc [start angle=270, end angle=90, y radius=0.6, x radius=0.5];
\filldraw (0,0) circle (0.05);
\filldraw (0,1.2) circle (0.05);

\draw (0,1.2) arc [start angle=180, end angle=90, radius=1];

\draw (1,2.2) arc [start angle=-180, end angle=0, radius=0.6];
\draw (1,2.2) arc [start angle=180, end angle=0, x radius=0.6, y radius=0.5];
\filldraw (1,2.2) circle (0.05);
\filldraw (2.2,2.2) circle (0.05);

\draw (2.2,2.2) -- (3.4,2.2);

\draw (3.4,2.2) arc [start angle=-180, end angle=0, radius=0.6];
\draw (3.4,2.2) arc [start angle=180, end angle=0, x radius=0.6, y radius=0.5];
\filldraw (3.4,2.2) circle (0.05);
\filldraw (4.6,2.2) circle (0.05);

\draw (4.6,2.2) -- (5.8,2.2);

\draw (5.8,2.2) arc [start angle=-180, end angle=0, radius=0.6];
\draw (5.8,2.2) arc [start angle=180, end angle=0, x radius=0.6, y radius=0.5];
\filldraw (5.8,2.2) circle (0.05);
\filldraw (7,2.2) circle (0.05);

\draw (7,2.2) -- (8.2,2.2);

\draw (8.2,2.2) arc [start angle=-180, end angle=0, radius=0.6];
\draw (8.2,2.2) arc [start angle=180, end angle=0, x radius=0.6, y radius=0.5];
\filldraw (8.2,2.2) circle (0.05);
\filldraw (9.4,2.2) circle (0.05);

\draw (9.4,2.2) arc [start angle=90, end angle=0, radius=1];

\draw (10.4,0) arc [start angle=270, end angle=90, radius=0.6];
\draw (10.4,0) arc [start angle=-90, end angle=90, y radius=0.6, x radius=0.5];
\filldraw (10.4,0) circle (0.05);
\filldraw (10.4,1.2) circle (0.05);

\draw (0,0) arc [start angle=-180, end angle=-90, radius=1];

\draw[dashed,red,ultra thick] (2.2,-1) -- (3.4,-1);

\draw (1,-1) arc [start angle=-180, end angle=-90, x radius=0.6, y radius=0.5];
\draw[dashed] (1.6,-1.5) arc [start angle=-90, end angle=0, x radius=0.6, y radius=0.5];
\draw[dashed] (1,-1) arc [start angle=180, end angle=100, radius=0.6];
\draw[dashed,red,ultra thick] (2.2,-1) arc [start angle=0, end angle=100, radius=0.6] coordinate (a1);
\draw[dashed,red,ultra thick] (2.2,-1) arc [start angle=0, end angle=80, radius=0.6] coordinate (b1);
\filldraw (1,-1) circle (0.05);
\filldraw (2.2,-1) circle (0.05);

\draw[dashed,red,ultra thick] (4.6,-1) -- (5.8,-1);

\draw[dashed,red,ultra thick] (3.4,-1) arc [start angle=180, end angle=0, radius=0.6];
\draw[dashed] (3.4,-1) arc [start angle=-180, end angle=0, x radius=0.6, y radius=0.5];
\filldraw (3.4,-1) circle (0.05);
\filldraw (4.6,-1) circle (0.05);

\draw (6.4,-1.5) arc [start angle=-90, end angle=0, x radius=0.6, y radius=0.5];
\draw[dashed] (5.8,-1) arc [start angle=-180, end angle=-90, x radius=0.6, y radius=0.5];
\draw[dashed,red,ultra thick] (5.8,-1) arc [start angle=180, end angle=80, radius=0.6];
\draw[dashed] (7,-1) arc [start angle=0, end angle=80, radius=0.6] coordinate (a2);
\filldraw (5.8,-1) circle (0.05);
\filldraw (7,-1) circle (0.05);

\draw (7,-1) -- (8.2,-1);

\draw (8.2,-1) arc [start angle=180, end angle=0, radius=0.6]node [midway, anchor=south] {$+$};
\draw (8.2,-1) arc [start angle=-180, end angle=0, x radius=0.6, y radius=0.5] node [midway,anchor=north] {$-$};
\filldraw (8.2,-1) circle (0.05);
\filldraw (9.4,-1) circle (0.05);

\draw (9.4,-1) arc [start angle=-90, end angle=0, radius=1];

\node at (5.2,0.6) {$F_1$};

% % % % % % % % % % % % % % % % % % % % % % % % % % % % % % % % % % % % % % % % %

\draw (1.2,-6) arc [start angle=270, end angle=90, radius=0.6];
\draw (1.2,-6) arc [start angle=-90, end angle=90, y radius=0.6, x radius=0.5];
\filldraw (1.2,-6) circle (0.05);
\filldraw (1.2,-4.8) circle (0.05);

\draw (1.2,-4.8) arc [start angle=180, end angle=90, radius=1];

\draw (2.2,-3.8) arc [start angle=180, end angle=0, radius=0.6];
\draw (2.2,-3.8) arc [start angle=-180, end angle=0, x radius=0.6, y radius=0.5];
\filldraw (2.2,-3.8) circle (0.05);
\filldraw (3.4,-3.8) circle (0.05);

\draw (3.4,-3.8) -- (4,-3.8);
\draw[dashed,red,ultra thick] (4,-3.8) -- (4.6,-3.8);

\draw[dashed,red,ultra thick] (4.6,-3.8) arc [start angle=180, end angle=0, radius=0.6];
\draw[dashed] (4.6,-3.8) arc [start angle=-180, end angle=0, x radius=0.6, y radius=0.5];
\filldraw (4.6,-3.8) circle (0.05);
\filldraw (5.8,-3.8) circle (0.05);

\draw[dashed,red,ultra thick] (5.8,-3.8) -- (7,-3.8);

\draw[dashed] (7,-3.8) arc [start angle=-180, end angle=0, x radius=0.6, y radius=0.5] coordinate (a);
\draw[dashed,red,ultra thick] (7,-3.8) arc [start angle=180, end angle=90, radius=0.6];
\draw (7.6,-3.2) arc [start angle=90, end angle=0, radius=0.6];
\filldraw (7,-3.8) circle (0.05);
\filldraw (8.2,-3.8) circle (0.05);

\draw (8.2,-3.8) arc [start angle=90, end angle=0, radius=1];

\draw (9.2,-6) arc [start angle=-90, end angle=90, radius=0.6] node [midway, anchor=west] {$+$};
\draw (9.2,-6) arc [start angle=270, end angle=90, y radius=0.6, x radius=0.5] node [midway,anchor=east] {$-$};
\filldraw (9.2,-6) circle (0.05);
\filldraw (9.2,-4.8) circle (0.05);

\draw (1.2,-6) arc [start angle=-180, end angle=-90, radius=1];

\draw (2.2,-7) arc [start angle=-180, end angle=0, radius=0.6];
\draw (2.2,-7) arc [start angle=180, end angle=0, x radius=0.6, y radius=0.5];
\filldraw (2.2,-7) circle (0.05);
\filldraw (3.4,-7) circle (0.05);

\draw (3.4,-7) -- (4.6,-7);

\draw (4.6,-7) arc [start angle=-180, end angle=0, radius=0.6];
\draw (4.6,-7) arc [start angle=180, end angle=0, x radius=0.6, y radius=0.5];
\filldraw (4.6,-7) circle (0.05);
\filldraw (5.8,-7) circle (0.05);

\draw (5.8,-7) -- (7,-7);

\draw (7,-7) arc [start angle=-180, end angle=0, radius=0.6];
\draw (7,-7) arc [start angle=180, end angle=0, x radius=0.6, y radius=0.5];
\filldraw (7,-7) circle (0.05);
\filldraw (8.2,-7) circle (0.05);

\draw (8.2,-7) arc [start angle=-90, end angle=0, radius=1];

\node at (5.2,-5.4) {$F_2$};

% % % % % % % % % % % % % % % % % % % % % % % % % % % % % % % % %

\filldraw (4,-3.8) circle (0.05) node [anchor=north] {$y_{i+1}$};
\filldraw (7.6,-3.2) circle (0.05) node [anchor=north] {$y_{i}$};
\filldraw (6.4,-1.5) circle (0.05) node [anchor=south] {$x_{i}$};
\filldraw (1.6,-1.5) circle (0.05) node [anchor=north east] {$x_{i+1}$};

\draw (6.4,-1.5) .. controls (6.4,-2.1) and (7.6,-2.9) .. (7.6,-3.2) node [midway, anchor=south west] {$P_i$};
\draw (1.6,-1.5) .. controls (1.6,-2.8) and (4.2,-2.8) .. (4,-3.8) node [midway, anchor=south west] {$P_{i+1}$};

\draw (a1) circle (0.05);
\filldraw (b1) circle (0.05) node [anchor=south west] {$x'_{i+1}$};
\filldraw (a2) circle (0.05) node [anchor=south west] {$x'_{i}$};
\end{tikzpicture}
\caption{Creating a new adjuster using paths $P_i$ and $P_{i+1}$ with label $(-,+)$. 
Dashed sections are not part of the new adjuster, which has $5$ fewer short cycles. 
The red section shows how much length is lost from $F_1$ and $F_2$; 
in the former case this includes one extra edge since $x_{i+1}$ is on the short side of a cycle.}\label{fig:gadget-merge}
\end{figure}
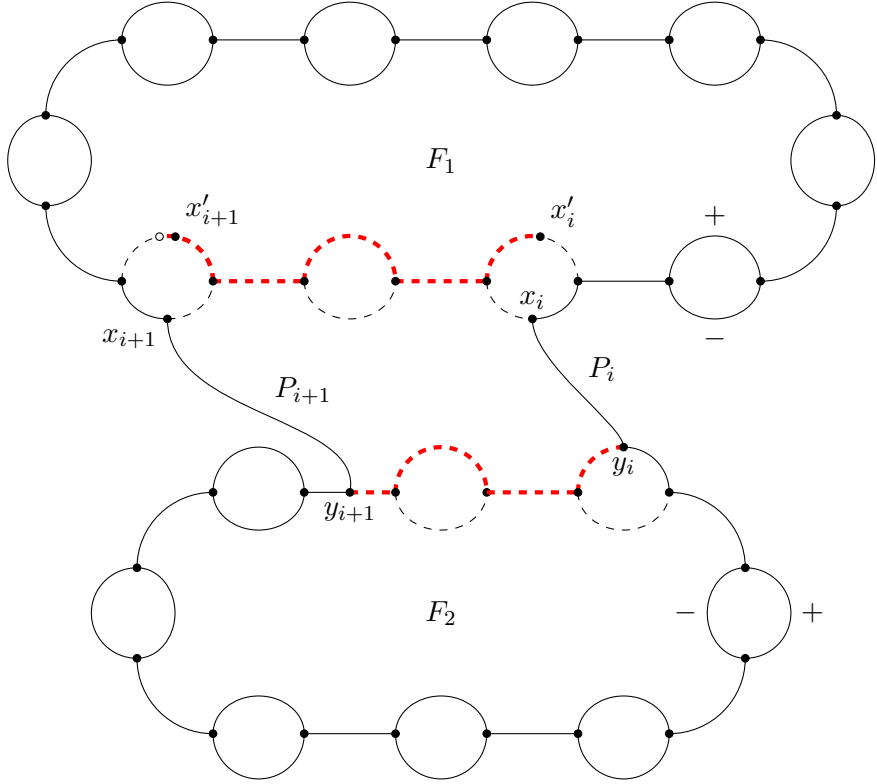

Now we claim that for some pair $P_i,P_{i+1}$ the adjuster constructed has the required properties. 
Note that, since each path $P_i$ has length at most $t$, the upper bound on length is satisfied for all adjusters constructed in this way, 
so it suffices to prove the lower bounds on $\ell$ and $r$. Suppose not, then for each $i$ either more than 
$(r_1+r_2)4s^{-1/2}+4$ short cycles are lost, or $\abs{S_1}+\abs{S_2}>4s^{-1/2}(\ell_1+\ell_2)$. 
In the former case this means more than $(r_1+r_2)4s^{-1/2}$ short cycles lie entirely in the section 
between $x_i$ and $x_{i+1}$ or the section between $y_{i+1}$ and $y_i$, since only $4$ can lie partially in these sections. 
Since these sections are disjoint, the former case occurs for fewer than $s^{1/2}/4$ choices of $i$. 
Similarly the latter case occurs for fewer than $s^{1/2}/4$ choices of $i$. Since there are at least $s^{1/2}/2$ 
choices for $i$, some choice gives an adjuster with the required properties.

Finally, note that the section of the adjuster obtained consisting of vertices not in $F_1$ is contiguous, and contributes at least $\ell-\ell_1$ to the overall length. If $r_2=0$ this section has no short cycles, as required.
\end{proof}

We are now ready to prove the main result of this section.

\begin{proof}[Proof of Lemma~\ref{lem:sn}]
Let $C= 10^{30}$.
We will prove both parts of the statement simultaneously, by finding an $r$-adjuster of suitable length for some $r\geq 9mk/8$. If $s\geq 2$ this adjuster can be taken to have length slightly larger than $n$, such that one of the routes has length exactly $n$. However, for $s=1$ this is not possible, since $G$ itself could have order less than $n$. In this case we show that the length of the adjuster can be made significantly larger than $n/2$.

We repeatedly apply Lemma~\ref{lem:find-gadget}, with $H_{\ref*{lem:find-gadget}}=H'$ and $G_{\ref*{lem:find-gadget}}=G-X_{i-1}$, where $X_0=\varnothing$ and $X_i=V(F'_{i})$ for $i\geq 1$ is the set of vertices used up from $G$ (see the construction of $F_i'$, for each $i$, below), in order to
find $f \leq 4\times 10^4\log^2 k$ adjusters $F_i$ for $i\in[f]$, such that $F_i$ is an $r_i$-adjuster and $r_{\mathrm{total}}:=\sum^{f}_{i=1}r_i\geq 2mk$. Note that, since $k^{22}\geq m\geq C= 10^{30}$ and $k\geq 10\log^4 k$ for any $k\geq 2$, each $F_i$ satisfies
\begin{equation}\abs{F_i}\geq mk\geq m_2+4.1\times 10^{18}\log^4k.\label{fi-bound}\end{equation}
Also, the total size of all these adjusters (using the fact that 
every adjuster found has size at most $mk+3\times 10^4\log^4 k\leq 1.1mk$) is 
at most $5\times 10^4mk\log^2k$.

Set $F'_1=F_1$ and $r'_1=r_1$. After finding each $F_i$ (for $i\geq 2$), we merge it with the adjuster $F'_{i-1}$ using Lemma~\ref{lem:merge-gadgets}. 
There are at least $q=4.1\times 10^{18}\log^4k$ vertex-disjoint paths between the two adjusters, by \eqref{fi-bound} and the hypothesis, and we may choose these to 
have length at most $\abs{H'}+s\leq mk+k$ since the fact that $\overline G$ is $H'$-free implies that any longer path 
contains a short-cut. Thus we obtain a merged adjuster $F'_i$, which is an $r'_i$-adjuster with
\begin{equation}(r'_{i-1}+r_i)\geq r'_i\geq (r'_{i-1}+r_i)(1-4q^{-1/2}) - 4,\label{merge-power}\end{equation}
and 
\begin{equation}\abs{F'_i}\leq \abs{F'_{i-1}}+\abs{F_i}+2mk+2k.\label{merge-total}\end{equation}
It follows from \eqref{merge-total} that for each $i \in [f]$,  \[\abs{F'_{i}}\leq \sum_{j\leq i}\abs{F_{j}}+(i-1)(2mk+2k)\leq (5\times 10^4)mk\log^2k+(4\times 10^4)\log^2 k(2mk+2k)\leq n/10,\]
and so, setting $X_i=V(F'_i)$, we have $\abs{G-X_{i-1}}\geq (sn - n/10) - n/10 \geq sn/10$. Consequently, we can indeed repeatedly apply Lemma~\ref{lem:find-gadget} with $G_{\ref*{lem:find-gadget}} = G-X_{i-1}$ for all $i \in [f]$. 

Using \eqref{merge-power}, by induction we have $r'_{i-1}\leq \sum_{j<i}r_j$ for each $i\geq 2$, and so \begin{equation}r'_i\geq r'_{i-1}+r_i-4q^{-1/2}r_{\mathrm{total}} - 4.\label{r-bound}\end{equation} 
Summing \eqref{r-bound}, we obtain $r'_{f}\geq r_{\mathrm{total}}(1-((16\times 10^4)q^{-1/2}\log^2 k))-4f\geq 3r_{\mathrm{total}}/4$.

We repeatedly apply Lemma~\ref{lem:long-cycle} with $H_{\ref*{lem:long-cycle}} = H'$ and $G_{\ref*{lem:long-cycle}} = G - Y_j$, where $Y_1 = V(F_{f}')$ and $Y_j =Y_1\cup V(C_j')$ for $j \geq 2$ is the set of vertices used up from $G$ so far (see the construction of $C_i'$ below), in order to find $c \leq 1.6 \times 10^6\log^2 k$ cycles $C_i$ each of length at least $n/(8\times 10^5\log^2k) \geq m_2 + 4.1 \times 10^{18}\log^4k$ (which we regard as $0$-adjusters) satisfying the following bound on their total length:
\begin{itemize}\item if $s\geq 2$ then $3n/2 \geq \sum_{i=1}^c\abs{C_i} \geq 4n/3$; whereas
\item if $s=1$ then $3n/4 \geq \sum_{i=1}^c\abs{C_i} \geq 2n/3$.
\end{itemize}

Set $C_1' = C_1$. After finding each $C_i$ (for $i \geq 2$), we merge it with the cycle $C_{i-1}'$ using Lemma~\ref{lem:merge-gadgets}. As before, there are at least $q = 4.1 \times 10^{18}\log^4 k$ vertex-disjoint paths between the two cycles, by the hypothesis, and we may choose these to have length at most $\abs{H'} + s \leq mk + k$. Thus we obtain a merged cycle $C_i'$ with \begin{equation}
     (\abs{C_{i-1}'} + \abs{C_i})(1 - 4q^{-1/2})\leq \abs{C_i'} \leq \abs{C_{i-1}'} + \abs{C_i} +2mk + 2k.
\end{equation} By induction, and that after each merging of cycles we can truncate the length of the new cycle (just as we did with the paths), we have $\abs{C_{i-1}'} \leq \left(\sum_{j<i}\abs{C_j}\right) + 2mk+2m$ for each $i \geq 2$, and so \begin{equation}\label{eq:C_i'}\abs{C_i'} \geq \abs{C_{i-1}'} + \abs{C_i} - 4q^{-1/2}\left(\left(\sum_{j=1}^{c}\abs{C_j}\right) + 2mk + 2m\right).\end{equation} Summing \eqref{eq:C_i'}, we obtain \[\abs{C_c'} \geq \sum_{j=1}^{c}\abs{C_j}(1 - (6.4 \times 10^6)q^{-1/2}\log^2 k) - ((6.4 \times 10^6)q^{-1/2}\log^2 k)(2mk + 2m),\] hence for $s\geq 2$, $\abs{C_c'} \geq 5n/4$ and for $s=1$, $\abs{C_c'} \geq 0.62n$. 

Now merge $C_c'$ with $F_{f}'$ using Lemma~\ref{lem:merge-gadgets} to produce our desired $r$-adjuster. One can see that $r \geq (3/4)^2r_{\mathrm{total}}\geq 9mk/8\geq (m+1)k$.

Suppose $s\geq 2$. Note that the final use of Lemma \ref{lem:merge-gadgets} merges a adjuster of length less than $n$ with a $0$-adjuster of length at least $5n/4$, and thus our $r$-adjuster of length $\ell>n$ contains a section of length at least $\ell-n$ in which there are no short cycles. Using the fact that $\overline G$ is $H'$-free we can shortcut parts of this section, 
if necessary, without reducing $r$, until $0\leq\ell-n\leq mk+k$. 
Now, using the fact that it is an $r$-adjuster for some $r\geq (m+1)k$, we may find a route of length exactly $n$.

Finally, if $s=1$ the final adjuster similarly has length $\ell\geq 0.6 n$ and contains a section of length $\ell-0.6n$ with no short cycles. We can shorten this section, if necessary, as before to obtain an $r$-adjuster of length between $0.6n$ and $0.7n$.
\end{proof}

\section{Stability and exactness}\label{sec:stability}
In this section we prove our main result via the following statement.

\begin{theorem}\label{thm:main2}
There exists a constant $C' > 0$ such that for any complete $k$-partite graph $H$ on $mk$ vertices where $C' \leq m \leq k^{22}$ and $n \geq 10^{60}mk\log^4 k$, we have that $C_n$ is $H$-good, \ie $R(C_n, H) = (\chi(H)-1)(n-1)+\sigma(H)$.
\end{theorem}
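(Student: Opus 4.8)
The plan is to derive the lower bound from Burr's construction~\eqref{lem:burr} and the matching upper bound $R(C_n,H)\le(k-1)(n-1)+\sigma(H)$ by induction on $k=\chi(H)$. Write $m_1\le\cdots\le m_k$ for the part sizes, so $\sigma(H)=m_1$; set $N=(k-1)(n-1)+m_1$ and let $G$ be a $C_n$-free graph on $N$ vertices, the goal being to find $H$ in $\overline G$. Suppose not. The base case $k=2$ is exactly Corollary~\ref{PS-cor38}: there $m_2\le mk\le k^{23}$ and $n\ge10^{60}mk\log^4k\ge2\times10^{49}m_2$, and we choose $C'$ large enough that $m_2\ge m\ge8$. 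For the inductive step I would first dispose of the \emph{reducible} case: if $\overline G$ is $H'$-free for some $H'\sqsubset H$ spanned by at least two parts with $\sigma(H')=\sigma(H)$, then $N\ge(\chi(H')-1)(n-1)+\sigma(H')=R(C_n,H')$ — the last equality by the inductive hypothesis when $m(H')$ is in range, and by Corollary~\ref{PS-cor} otherwise — so $H'\subseteq\overline G$, a contradiction. Hence we may assume $G$ is \emph{irreducible}.

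The core of the proof is a stability statement: an irreducible $C_n$-free graph $G$ of order roughly $(k-1)(n-1)$ whose complement is $H$-free is essentially Burr's construction. I would establish this using the expansion and adjuster machinery of Sections~\ref{sec:expansion}--\ref{sec:gadgets}. The key leverage point is Lemma~\ref{lem:sn}: any subgraph $G'$ of $G$ that is robustly connected in the weak sense of that lemma and has $\abs{G'}\ge(k-1)n-n/10$ must contain $C_n$, since $\overline{G'}$ is still $H$-free and $H$ is $k$-partite, so we apply Lemma~\ref{lem:sn} with $s=k-1\ge2$. As $\abs G=(k-1)(n-1)+m_1\ge(k-1)n-n/10$, this means $G$ cannot be made robustly connected by deleting a polylogarithmic-size set of vertices. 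So $G$ has many small separators, which I would repeatedly peel off — absorbing any negligibly small components into the deleted set — to obtain a set $X$ of $o(n)$ vertices such that the components $G_1,\dots,G_p$ of $G-X$ are each robustly connected and each have order $<(k-1)n-n/10$. Since there are no $G-X$ edges between distinct $G_i$, the graph $\overline{G-X}$ contains the join $\overline{G_1}\vee\cdots\vee\overline{G_p}$; and each $G_i$, being robustly connected, is in fact nearly complete — if some $\overline{G_i}$ contained a complete bipartite graph on two parts of $H$ we could, via Lemma~\ref{lem:sn} with $s=1$, extract a long cycle/adjuster and shorten it along an adjuster-free section to produce $C_n$. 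Hence each $G_i$ has cliques of every size up to roughly $\abs{G_i}$, so $p\ge k$ would yield $H\subseteq\overline{G-X}\subseteq\overline G$, forcing $p\le k-1$; while $p=1$ would make $G-X$ a single robustly connected graph of order $\ge(k-1)n-n/10$, contradicting the above, so $2\le p\le k-1$. Finally, irreducibility together with an analysis of how the vertices of $X$ attach to the $G_i$ forces $p=k-1$ and $\abs X<m_1$.

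Given this decomposition, the conclusion is immediate: $\sum_{i=1}^{k-1}\abs{G_i}=N-\abs X>(k-1)(n-1)$, so some component $G_i$ has $\abs{G_i}\ge n$. As $G_i$ is robustly connected and nearly complete, $\overline{G_i}$ is $K_{m,m}$-free and every $A\subseteq V(G_i)$ with $\abs A\ge m$ satisfies $\abs{N_{G_i}(A)\cup A}\ge n$; taking any edge $xy$ of $G_i$ together with a long $x$-$y$ path (available by robust connectivity), Lemma~\ref{PS-lemma} produces an $x$-$y$ path of order exactly $n-1$, which the edge $xy$ closes into a copy of $C_n$ in $G$, the desired contradiction. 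Alternatively one can finish entirely with adjusters: each $G_i$ yields, by Lemma~\ref{lem:sn}, an adjuster of length close to $0.65n$, and two such adjusters in neighbouring components can be merged across $X$ using Lemma~\ref{lem:merge-gadgets} and then shortened along an adjuster-free section to length exactly $n$.

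I expect the main obstacle to be the stability step, and within it the sharp bound $\abs X<m_1$ that drives the final pigeonhole: when $\sigma(H)$ is tiny we genuinely need an almost-exact structure theorem, and the usual reduction to a balanced complete multipartite graph is off-limits since it changes $\sigma$. This is precisely where the very weak connectivity tolerated by Lemma~\ref{lem:sn} (separators of polylogarithmic rather than constant size) and the irreducibility reduction must be combined carefully, all while coping with possibly wildly unbalanced part sizes of $H$.
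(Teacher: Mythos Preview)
Your overall architecture---induction on $k$, a reducibility/irreducibility split, and a stability step yielding $k-1$ robustly connected near-complete pieces---matches the paper's. The gap is in the endgame.

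Your primary route tries to force $\abs{X}<m_1$ so that pigeonhole gives a single component of order $\geq n$. The paper does \emph{not} attempt this, and for good reason: the separators you peel off have size of order $\log^4 k$ each, and there can be up to $k$ of them, so there is no mechanism in this framework to drive $\abs{X}$ below $m_1$ (which could equal $1$). You correctly flag this as the main obstacle, but it is not merely hard---it is the wrong target. The paper instead analyses how the pieces $A_i$ are connected in $G$ (not in $G-X$). The dichotomy is: either some pair $A_i,A_j$ is linked by two vertex-disjoint paths in $G$, or some single vertex $v$ separates one $A_i$ from all the others. In the first case one builds an adjuster of length $\approx 0.65n$ in $A_i$ and a long cycle in $A_j$ and merges them; crucially this uses Lemma~\ref{lem:merge-efficient}, which needs only two paths and exploits that each $\overline{G[A_i]}$ is $K_{m_1,m_2}$-free, rather than Lemma~\ref{lem:merge-gadgets}, which needs many paths that simply are not available between distinct components. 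In the second case one applies Lemma~\ref{PS-lemma} not to $A_i$ alone but to the whole component of $G-v$ containing $A_i$, together with $v$; the expansion hypothesis $\abs{N(B)\cup B}\geq n$ there comes from the failure of case~\eqref{stability-i} of the stability lemma, not from $\abs{A_i}\geq n$.

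A secondary issue: your argument that each $\overline{G_i}$ is $K_{m_1,m_2}$-free invokes Lemma~\ref{lem:sn} with $s=1$ to ``produce $C_n$'', but that case of the lemma only yields an adjuster of length between $0.6n$ and $0.7n$, not a cycle of length $n$. The paper obtains the $K_{m_1,m_2}$-freeness directly from the $H$-freeness of $\overline G$ together with the decomposition, and uses the $s=1$ adjuster only as a building block for the merge in the two-paths case.
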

\begin{proof}[Proof of Theorem \ref{thm:main}]
Set $k=\chi(H)$ and $m=\abs{H}/k$. By adding edges to $H$, if necessary, without changing $\sigma(H)$, we may assume it is a complete $k$-partite graph for $k=\chi(H)$. If $m>k^{22}$ then the same is true of the size of the largest part, and so Corollary \ref{PS-cor} gives the required bound for some constant $C''$ (and in fact the logarithmic term is not needed). If $C'\leq m\leq k^{22}$ then Theorem \ref{thm:main2} gives the required bound with constant $10^{60}$. Finally, if $m<C'$ then by increasing the size of the largest part we may replace $H$ with a graph $H'$ of order at most $C'\abs{H}$ and $\sigma(H')=\sigma(H)$ which satisfies the conditions of Theorem \ref{thm:main2}, giving that $C_n$ is $H'$-good (and hence $H$-good) for $n\geq 10^{60}C'\abs{H}\log^4\chi(H)$.
\end{proof}

To prove Theorem \ref{thm:main2}, we first establish the following stability result.
\begin{lemma}\label{lem:stability}
There exists a constant $C>0$ satisfying the following. Fix $k\geq 2$ and $C\leq m\leq k^{22}$ and $z\geq 0$ and $n\geq 10^{49}km\log^4 k$. Let $H$ be a complete $k$-partite graph with $mk$ vertices, and write $m_1\leq \cdots\leq m_k$ for the sizes of the parts. 
Define $\hat{H}\sqsubset H$ to be the graph obtained by removing a part of size $m_2$. 
Suppose $G$ is a $C_n$-free graph with $\abs{G}\geq(k-1)(n-1)+z$ such that $\overline{G}$ is $H$-free. 
Then at least one of the following holds.
\begin{enumerate}[(i)]
\item\label{stability-i} There exists $G'\subset G$ such that $\overline{G'}$ is $\hat H$-free and $\abs{G'}\geq (k-2)(n-1)+z$.
\item\label{stability-ii} There exist disjoint sets $A_1,\ldots,A_{k-1}$, with no edges between them, such that for each $i$ we have
\begin{itemize}
\item $\abs{A_i}\geq 0.95n$,
\item $\overline{G[A_i]}$ is $K_{m_1,m_2}$-free, and
\item within $G[A_i]$, any two disjoint sets of size at least $m_2+4.1\times 10^{18}\log^4 k$ have at least 
$4.1\times 10^{18}\log^4 k$ disjoint paths between them.
\end{itemize}
\end{enumerate}
\end{lemma}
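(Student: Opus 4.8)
The plan is to cut $G$ repeatedly into highly-connected pieces using Menger's theorem together with the contrapositive of Lemma~\ref{lem:sn}, and then either read alternative~(ii) off the pieces or peel a part off $H$ to land in alternative~(i). Throughout write $q:=4.1\times 10^{18}\log^4 k$; since $m_1\leq\cdots\leq m_k$ sum to $mk$ we have $m_2\leq mk/(k-1)\leq 2m$, so $m_2$, $mk$ and $q$ are all $o(n)$. Call a set $W\subseteq V(G)$ \emph{$q$-linked} if inside $G[W]$ any two disjoint sets of size $\geq m_2+q$ are joined by $q$ disjoint paths. Note that if $\overline G$ is already $\hat H$-free then alternative~(i) holds trivially with $G'=G$, so we may assume $\overline G\supseteq\hat H$.

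The basic move is this: if $J\subseteq G$ satisfies $\abs J\geq sn-n/10$ for some $s\geq2$ and $\overline J$ is $H'$-free for some $(s+1)$-partite $H'\sqsubseteq H$, then $J$ is not $q$-linked, since otherwise Lemma~\ref{lem:sn} would give $C_n\subseteq J\subseteq G$. Hence, by Menger's theorem, there is a set $S$ with $\abs S<q$ separating $J$ into $U\sqcup S\sqcup U'$ with no $J$-edges between $U$ and $U'$, each of size $\geq m_2$. Since $\overline G$ is $H$-free and there are no edges between $U$ and $U'$, the complements of $G[U]$ and $G[U']$ cannot contain $H$ on complementary sets of parts; choosing parts greedily exactly as in the proof of Lemma~\ref{lem:expansion} (the largest initial segment of the parts of $H$ that fits on the $U$-side), we may pass from $(G,H)$ either to two instances on strictly fewer parts, or — in a degenerate, very unbalanced case — to one instance on the same number of parts with a piece of size $<m_k$ cut off. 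Applying this to $J=G$, which for $k\geq3$ cannot be $q$-linked because $\abs G\geq(k-1)(n-1)+z\geq(k-1)n-n/10$, and recursing, we obtain a recursion whose "balanced" steps form a tree with $O(k)$ leaves and depth $O(k)$; controlling the unbalanced same-part steps (using $C_n$-freeness to prevent their accumulation), one shows the total number of vertices discarded into cut sets and unbalanced remainders is $o(n)$.

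When the recursion terminates, each surviving piece is pairwise non-adjacent to the others, is $q$-linked (otherwise it would have been cut further), and has complement free of some complete bipartite subgraph $K_{m_i,m_j}$ of $H$. Appealing to Corollary~\ref{PS-cor38}, each such piece has order at most $n+m_i-1\leq n+mk=(1+o(1))n$; together with $\abs G\geq(k-1)(n-1)+z$ and the $o(n)$ loss, this forces at least $k-1$ pieces of order $\geq0.95n$. If at least $k-1$ of the large pieces have $K_{m_1,m_2}$-free complement, these are the sets $A_1,\ldots,A_{k-1}$ of alternative~(ii). Otherwise some large piece $P^{*}$ has complement free only of a $K_{m_i,m_j}$ with $\{i,j\}\neq\{1,2\}$; tracking through the recursion which parts of $H$ each piece accounts for, one removes a subset of $P^{*}$ of size exactly $\min\{\abs{P^{*}},n-1\}$, leaving $G'$ with $\abs{G'}\geq(k-2)(n-1)+z$, and argues — from the $H$-freeness of $\overline G$ and the pairwise non-adjacency of the remaining large pieces — that $\overline{G'}$ is $\hat H$-free, giving alternative~(i). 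The base case $k=2$ is immediate: if $z<m_1$ any $z$-subset of $G$ witnesses~(i); otherwise $\overline G$ is already $K_{m_1,m_2}$-free and $\abs G\geq n-1+z\geq0.95n$, so $G$ minus the $o(n)$ trash from at most $O(q)$ cuts is the required $A_1$.

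The main obstacle is the unbalancedness the hypotheses permit: with only a lower bound on the average part size $m$ and none on the individual $m_i$, a piece cut off as "free of $H$ on a single large part of size $m_k$" need not be small, so one must use $C_n$-freeness and the order bounds of Lemma~\ref{lem:sn} and Corollary~\ref{PS-cor38} both to prevent such steps from discarding more than $o(n)$ vertices in total and to show the surviving pieces genuinely split into $k-1$ large $q$-linked blocks. Entangled with this is the need to upgrade "complement $K_{m_i,m_j}$-free" to "complement $K_{m_1,m_2}$-free" for each of the $k-1$ blocks, which is exactly where the reduction to alternative~(i) is invoked when the upgrade fails; making that reduction clean — in particular checking that removing at most $n-1$ vertices kills every copy of $\hat H$ in the complement — is the delicate point requiring the part-size bookkeeping threaded through the whole recursion.
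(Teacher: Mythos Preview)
Your recursive cutting strategy is in the right spirit, but it misses the key simplification that makes the paper's proof clean, and as a result the two places you flag as ``delicate'' are genuine gaps rather than mere bookkeeping. The paper does not recurse at all: it assumes \emph{both} (i) and (ii) fail, and the failure of (i) alone already implies that every set $B$ of size $m_2$ satisfies $\abs{B\cup N_G(B)}\geq n$ (otherwise $G'=G-(B\cup N_G(B))$ witnesses (i)). This single observation replaces your whole recursion tree: after removing at most $k$ cut sets of size $q$, the remaining graph has at most $k-1$ large components $A_1,\ldots,A_t$ and fewer than $m_2$ vertices in small components, full stop. The part-bookkeeping is then done once, after the decomposition, via carefully chosen test graphs $H_i^{(j)}$ (the $i$th largest class together with the $j-1$ smallest), and a short counting argument forces $t=k-1$ and each $s_i=1$.

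Concretely, your argument breaks at two points. First, you assert that each terminal $q$-linked piece has complement free of some bipartite $K_{m_i,m_j}$, but Lemma~\ref{lem:sn} only yields a contradiction when the piece is large \emph{relative to the number of parts in its $H'$}; a $q$-linked piece of size, say, $1.3n$ whose complement is only known to be free of a $3$-partite $H'$ violates nothing, and Corollary~\ref{PS-cor38} does not apply to it either. To rule this out you need exactly the size-versus-parts counting the paper does with $\sum s_i\leq k-1$, and your greedy recursive assignment of parts does not obviously deliver that inequality across all leaves of a branching tree. Second, your reduction to alternative (i) --- remove a piece $P^*$ and claim $\overline{G'}$ is $\hat H$-free --- is not justified: there is no reason every copy of $\hat H$ in $\overline G$ must use a part lying inside $P^*$. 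The paper never needs such an argument, because once failure of (i) has been converted into the expansion statement above, the $K_{m_1,m_2}$-freeness of each $\overline{G[A_i]}$ follows directly (a $K_{m_1,m_2}$ in one $A_i$ plus $m_k$ vertices from each other $A_j$ would assemble $H$ in $\overline G$), with no need to re-enter case (i).
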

\begin{proof}
Let $C$ be a constant given in Lemma~\ref{lem:sn}. 
Suppose not. Start with $A=V(G)$ and $S=\varnothing$. If there exists a set $X$ of order at most $4.1\times 10^{18}\log^4k$ 
such that removing $X$ from $G[A]$ increases the number of components of $G[A]$ of order at least $m_2$, remove $X$ from $A$ 
and add it to $S$. Do this for $k$ iterations or until no such set exists. 
At the end of the process we have $\abs{S}\leq 4.1\times 10^{18}k\log^4k$.

Note that any set $B$ of order at least $m_2$ satisfies $\abs{B\cup N(B)}\geq n$, 
else $G[V(G)\setminus(B\cup N(B))]$ satisfies (i). In particular, if $B\subset A$ with $\abs{B}\geq m_2$ then 
\[\abs{B\cup N_{G[A]}(B)}\geq \abs{B\cup N(B)}-\abs{S}\geq n-4.1\times 10^{18}k\log^4 k\geq \abs{H}.\] 
It follows that the total size of all components of $G[A]$ of size less than $m_2$ is less than $m_2$, 
since otherwise there is a union of some of these components of size between $m_2$ and $2m_2$, 
contradicting the fact that sets of this size expand well. It also follows that any other component has order 
at least $n-4.1\times 10^{18}k\log^4 k$; let these components of $G[A]$ be $A_1,\ldots A_t$, where $t\geq 1$ and 
\begin{equation}\sum_{i\in[t]}\abs{A_i}\geq\abs{A}-m_2\geq \abs{G}-\abs{S}-m_2\geq(k-1)n-k-4.1\times 10^{18}k\log^4k-m_2.
\label{Ai-size}\end{equation}
We must have $t\leq k-1$ since otherwise $\overline G$ contains a copy of $H$ obtained by choosing a 
suitable number of vertices from $A_1,\ldots,A_k$. 

In particular, we added vertices to $S$ fewer than $k$ times, and so the process described above stopped 
because no more removals were possible. This will give the required connectivity properties. 
Indeed, if $B_1,B_2$ are disjoint sets in $A_i$ of size at least $m_2+4.1\times 10^{18}\log^4 k$ then no set $X$ 
of order $4.1\times 10^{18}\log^4 k$ separates $B_1\setminus X$ from $B_2\setminus X$ within $G[A_i]$ 
(since both these sets have size at least $m_2$ and by the construction of $A$). 
Thus, by Menger's theorem there are at least $4.1\times 10^{18}\log^4 k$ disjoint paths between them in $G[A_i]$.

For each $i\leq t$, we define $k-t+1$ subgraphs $H^{(1)}_i\sqsubset \cdots \sqsubset H^{(k-t+1)}_i\sqsubset H$ as follows. 
Set $H_i^{(1)}$ to be an independent set of $m_{k+1-i}$ vertices, and for $2\leq j\leq k-t+1$ set 
$H_i^{(j)}=K_{m_{k+1-i},m_{k-t},m_{k-t-1},\ldots,m_{k-t-j+2}}$, \ie $H_i^{(j)}$ is a complete $j$-partite subgraph consisting of 
the $i$th largest class of $H$ together with $j-1$ of the $k-t$ smallest classes taken in decreasing order. 

For each $i\leq t$, let $s_i\geq 1$ be the largest value such that $\overline{G[A_i]}$ contains $H_i^{(s_i)}$. 
Suppose that $\sum_{i\in[t]} s_i\geq k$, and consider the graph induced in $\overline{G}$ by the vertices of a copy of 
$H_i^{(s_i)}$ in $\overline{G[A_i]}$ for each $i$. Taking the largest class from each copy creates a copy of 
$K_{m_k,\ldots,m_{k-t+1}}$, and the other classes give a complete $(k-t)$-partite graph with $j$th largest part 
having size at least $m_{k-t+1-j}$ for each $j\in[k-t]$, which contains $K_{m_{k-t},\ldots,m_1}$. Thus $\overline G\supset H$, a contradiction.

Consequently we have $\sum_{i\in[t]} s_i\leq k-1$, and in particular $s_i\leq k-t$ for each $i$. By definition of $s_i$, 
$\overline{G[A_i]}$ is $H^{(s_i+1)}_i$-free for each $i$. If $s_i>1$ we have $\abs{A_i}<s_in-n/10$, since otherwise Lemma~\ref{lem:sn}, taking $H'_{\ref*{lem:sn}}=H_i^{(s_i+1)}$, gives a copy of $C_n$ inside $G[A_i]$.
Contrariwise, if $s_i=1$ we have $\abs{A_i}<n+m_{k-t}$ by Corollary \ref{PS-cor38}. In either case, for each $i\in [t]$ we have $\abs{A_i}<s_in+m_{k-t}$. Note that $(t-1)m_{k-t}\leq\abs{H}=mk$. 
Thus, if for any $i$ we have $s_i>1$ then \[\sum_{j\in [t]}\abs{A_j}<s_in-n/10+\sum_{\substack{j\in[t]\\j\neq i}}(s_in+m_{k-t})\leq (k-1)n-n/10+(t-1)m_{k-t},\] contradicting \eqref{Ai-size}. We may therefore assume each $s_i=1$. Similarly if $\sum_{i\in [t]} s_i<k-1$, and so $t<k-1$, we obtain a contradiction to \eqref{Ai-size}. Thus $t=k-1$, and each of $A_1,\ldots,A_{k-1}$ has size at least 
$n-4.1\times 10^{18}k\log^4k\geq 0.95n$. If for some $i$ we have $K_{m_1,m_2}\subset\overline{G[A_i]}$, 
then taking the copy of $K_{m_1,m_2}$ together with $m_k$ vertices from each other part gives a copy of 
$K_{m_k,\ldots,m_k,m_2,m_1}$ (with $k$ parts in total) in $\overline{G}$, and so $\overline G\supset H$, 
a contradiction. Together with the connectivity properties heretofore established, this completes the proof.
\end{proof}

We now proceed from the stability result above to exactness. 
We need the following result which says that when the complement is $K_{m_1,m_2}$-free we can do much better than Lemma~\ref{lem:merge-gadgets}.

\begin{lemma}\label{lem:merge-efficient}Let $F_i$ be an $r_i$-adjuster of length $\ell_i$ for $i\in\{1,2\}$, 
with $F_1$ and $F_2$ disjoint, and suppose $\overline{G[V(F_i)]}$ is $K_{m_1,m_2}$-free for each $i$, where $m_2\geq m_1$. 
Suppose further that there are two vertex-disjoint paths $P_1,P_2$ between $F_1$ and $F_2$, each of length at most $t$. 
Then there is an $r$-adjuster of length $\ell$ all of whose vertices are contained in $F_1\cup F_2\cup P_1\cup P_2$, 
where $r\geq r_1+r_2-2(m_1+m_2)$ and $\ell_1+\ell_2-2(m_1+m_2)\leq\ell\leq\ell_1+\ell_2+2t$.
Furthermore, if $r_2=0$ then the adjuster contains a section of length at least $\ell-\ell_1$ with no short cycles.
\end{lemma}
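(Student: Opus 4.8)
The plan is to imitate the construction in Lemma~\ref{lem:merge-gadgets} --- open up a route of $F_1$ and a route of $F_2$ at the points where $P_1$ and $P_2$ attach, splice everything into a single route, and keep the short cycles of $F_1$ and $F_2$ that are not disturbed as the short cycles of the new adjuster --- but to replace the pigeonhole/Erd\H{o}s--Szekeres step (unavailable with only two connecting paths) by an appeal to the $K_{m_1,m_2}$-freeness of $\overline{G[V(F_i)]}$. As usual we first replace $P_1,P_2$ by subpaths internally disjoint from $F_1\cup F_2$; this only shortens them and keeps them vertex-disjoint, and we write $x_i\in V(F_1)$ and $y_i\in V(F_2)$ for the endpoints of $P_i$.

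The core is a \emph{traversal claim}, to be applied once inside $F_1$ with endpoints $x_1,x_2$ and once inside $F_2$ with endpoints $y_1,y_2$: if $F$ is an $r_F$-adjuster of length $\ell_F$, $\overline{G[V(F)]}$ is $K_{m_1,m_2}$-free and $p,q\in V(F)$ are distinct, then there is a path $Q$ from $p$ to $q$ in $F$ with $\ell_F-(m_1+m_2)\leq\abs{Q}\leq\ell_F$ which passes through the longer arc of each of some set of at least $r_F-(m_1+m_2)$ short cycles of $F$ (so that, together with those cycles dangling off $Q$ at their distinguished vertices, $Q$ is the longest route of an ``adjuster with loose ends $p,q$''). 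To prove it, first choose a route $R$ of $F$ that passes through both $p$ and $q$ and uses the longer arc of every short cycle not containing $p$ or $q$; this costs at most two short cycles (the one, if any, whose interior contains $p$, and likewise for $q$) and at most $2$ in length, and the single remaining subcase, where $p$ and $q$ lie on opposite arcs of one short cycle, is handled by discarding that cycle. Now $R$ is a cycle split by $p,q$ into arcs $A$ and $B$; if the shorter arc is short (at most about $m_1+m_2$ edges) take $Q$ to be the longer arc, and otherwise apply $K_{m_1,m_2}$-freeness to the $m_2$ interior vertices of $A$ nearest $q$ and the $m_1$ interior vertices of $B$ nearest $p$ to obtain an edge $e$ of $G$ between them, and let $Q$ run along $A$ from $p$ to $e$, cross $e$, and run along $B$ to $q$, with orientations chosen so that the two unused stretches (of sizes at most $m_2-1$ and $m_1-1$) sit at the $q$-end of $A$ and the $p$-end of $B$. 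Either way $Q$ omits at most $m_1+m_2$ vertices of $R$, giving the length bound; and since every short cycle of an adjuster has at least three vertices, at most $(m_1+m_2)/3+2\leq m_1+m_2$ short cycles are omitted or discarded --- the last inequality failing only when $m_1=m_2=1$, in which case $G[V(F)]$ is complete and the claim is trivial.

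Given the claim, assemble the new adjuster by concatenating the $F_1$-traversal from $x_2$ to $x_1$, the path $P_1$ from $x_1$ to $y_1$, the $F_2$-traversal from $y_1$ to $y_2$, and $P_2$ from $y_2$ back to $x_2$; the chords used lie in the disjoint sets $V(F_1)$ and $V(F_2)$ and the $P_i$ are internally disjoint from $F_1\cup F_2$, so there is no interference and the result is a single route-cycle with dangling short cycles, i.e.\ a genuine adjuster, contained in $F_1\cup F_2\cup P_1\cup P_2$. Its number of short cycles $r$ is the sum of those kept in the two traversals, so $r\geq r_1+r_2-2(m_1+m_2)\leq r_1+r_2$ --- wait, rather $r_1+r_2-2(m_1+m_2)\leq r\leq r_1+r_2$ since no new short cycle is created --- and its longest route has length $\ell$ equal to the two traversal lengths plus $\abs{P_1}+\abs{P_2}$, so $\ell_1+\ell_2-2(m_1+m_2)\leq\ell\leq\ell_1+\ell_2+2t$, the lower bound using $\abs{P_i}\geq1$ and the upper bound using that each traversal lies inside a route of $F_i$ plus at most one chord (which replaces a strictly longer stretch), contributing at most $\ell_i$, and $\abs{P_i}\leq t$. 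Finally, if $r_2=0$ then the $F_2$-traversal and the paths $P_1,P_2$ contain no short cycles, so the contiguous piece of the new route consisting of everything outside $F_1$ is short-cycle-free and has length $\ell$ minus the length of the $F_1$-traversal, which is at least $\ell-\ell_1$.

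The only real obstacle is the traversal claim, and within it the requirement to lose essentially nothing in both the length and the short-cycle count simultaneously: this is exactly why the endpoints of the chord must be the vertices of $A$ and $B$ closest to $q$ and $p$ and why the orientations must be chosen so that the two discarded stretches abut $q$ in $A$ and $p$ in $B$, and it is the factor $3$ coming from the length of an odd cycle that lets the unavoidable $O(1)$ positional losses (from $p$ or $q$ sitting inside a short cycle) be absorbed into the $m_1+m_2$ budget rather than handled by brute force. The remaining case analysis around those positions is routine bookkeeping.
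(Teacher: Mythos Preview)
Your approach is essentially identical to the paper's: establish a ``long traversal'' path $Q_i$ in each $F_i$ between the endpoints of $P_1,P_2$ using a chord guaranteed by $K_{m_1,m_2}$-freeness, concatenate $Q_1,P_1,Q_2,P_2$ into a cycle, and retain every short cycle whose arc survives intact. The length accounting and the final $r_2=0$ clause also match the paper.

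There is, however, one genuine error in your traversal claim. Your bound ``at most $(m_1+m_2)/3+2$ short cycles are omitted or discarded'' is not just unjustified --- it is false. The fact that a short cycle has at least three vertices does \emph{not} mean that losing it consumes three omitted vertices: a short cycle is lost as soon as a \emph{single} edge of its arc on $R$ is missing from $Q$. Concretely, if every short cycle is a triangle and every linking path has length~$0$, then the route $R$ alternates distinguished vertices and arc-midpoints, and a contiguous gap of $c$ vertices destroys roughly $c/2$ short-cycle arcs, not $c/3$. With $m_1=1$ and $m_2=10$ this already exceeds $(m_1+m_2)/3+2$. The correct (and simpler) count is the paper's: at most $m_1+m_2$ \emph{edges} of the chosen route are absent from $Q$, the short-cycle arcs on $R$ are edge-disjoint, and a short cycle is lost only if its arc contains at least one missing edge --- hence at most $m_1+m_2$ lost per side and $r\ge r_1+r_2-2(m_1+m_2)$. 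Replace your $/3$ step with this edge-count and the proof goes through; the remaining small constant discrepancies in the length bound (your $\lvert Q\rvert\ge\ell_F-(m_1+m_2)$ is off by one or two owing to the $-2$ from possibly using a shorter arc through $p$ or $q$) are absorbed by the $+2$ from $\lvert P_i\rvert\ge 1$, exactly as in the paper.
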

\begin{proof}Let $P_i$ have ends $x_i$ in $F_1$ and $y_i$ in $F_2$. We claim that there is a path $Q_1$ of length at least 
$\ell_1-m_1-m_2-1$ in $F_1$ between $x_1$ and $x_2$. 
To see this, first note that unless $x_1$ and $x_2$ are on opposite sides of the same short cycle, there is a route in $F_1$ 
which includes both $x_1$ and $x_2$, and has length at least $\ell_1-2$. If $x_1$ and $x_2$ are at distance at most $m_2$ 
in this route then by taking the whole route except for a path of length at most $m_2$ between the two vertices, we are done. 
If not, then the $m_1$ vertices immediately before $x_1$ on the route do not include $x_2$, and the $m_2$ vertices immediately 
before $x_2$ do not include $x_1$. There is an edge $z_1z_2$ between these two sets of vertices in $G[V(F_1)]$, since $\overline{G[V(F_1)]}$ is $K_{m_1,m_2}$-free, and so there 
is a path which starts at $x_1$, goes around the route to $z_2$, via the extra edge to $z_1$ and then around the route in 
the opposite direction to $x_2$. This path misses out at most $m_1+m_2-1$ vertices from the route, so has at least the required length. See Figure \ref{fig:nearly-covering-path}.
Finally, if $x_1$ and $x_2$ are on opposite sides of the same short cycle, there is a path around the adjuster from $x_1$ to $x_2$ using the
vertices on that short cycle after $x_1$ and before $x_2$, and the longer side of every other short cycle. Similarly there is
another path which uses the vertices on that short cycle after $x_2$ and before $x_1$. At least one of these uses at least half of every
short cycle, so has length at least $\ell_1$.

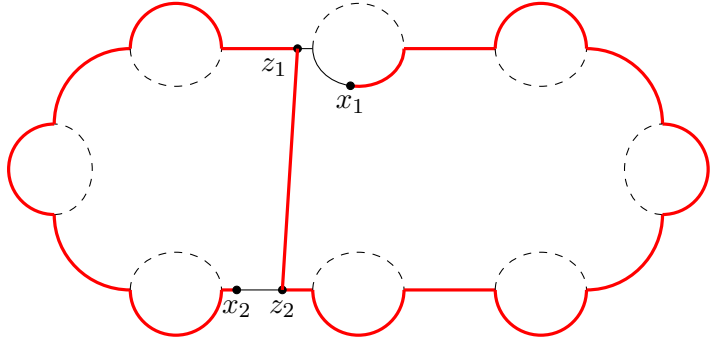
\begin{figure}
    \centering
    \begin{tikzpicture}
\draw[very thick, red] (1.2,-6) arc [start angle=270, end angle=90, radius=0.6];
\draw[dashed] (1.2,-6) arc [start angle=-90, end angle=90, y radius=0.6, x radius=0.5];

\draw[very thick, red] (1.2,-4.8) arc [start angle=180, end angle=90, radius=1];

\draw[very thick, red] (2.2,-3.8) arc [start angle=180, end angle=0, radius=0.6];
\draw[dashed] (2.2,-3.8) arc [start angle=-180, end angle=0, x radius=0.6, y radius=0.5];

\draw[very thick, red] (3.4,-3.8) -- (4.4,-3.8);
\draw (4.4,-3.8) -- (4.6,-3.8);

\draw[dashed] (4.6,-3.8) arc [start angle=180, end angle=0, radius=0.6];
\draw (4.6,-3.8) arc [start angle=-180, end angle=-100, x radius=0.6, y radius=0.5] coordinate (x1);
\draw[very thick, red] (x1) arc [start angle=-100, end angle=0, x radius=0.6, y radius=0.5];
\filldraw (x1) circle (0.05) node[anchor=north] {$x_1$};
\filldraw (4.4,-3.8) circle (0.05) node[anchor=north east] {$z_1$};

\draw[very thick, red] (5.8,-3.8) -- (7,-3.8);

\draw[dashed] (7,-3.8) arc [start angle=-180, end angle=0, x radius=0.6, y radius=0.5] coordinate (a);
\draw[very thick, red] (7,-3.8) arc [start angle=180, end angle=0, radius=0.6];

\draw[very thick, red] (8.2,-3.8) arc [start angle=90, end angle=0, radius=1];

\draw[very thick, red] (9.2,-6) arc [start angle=-90, end angle=90, radius=0.6];
\draw[dashed] (9.2,-6) arc [start angle=270, end angle=90, y radius=0.6, x radius=0.5];

\draw[very thick, red] (1.2,-6) arc [start angle=-180, end angle=-90, radius=1];

\draw[very thick, red] (2.2,-7) arc [start angle=-180, end angle=0, radius=0.6];
\draw[dashed] (2.2,-7) arc [start angle=180, end angle=0, x radius=0.6, y radius=0.5];

\draw (3.6,-7) -- (4.2,-7);
\draw[very thick, red] (3.4,-7) -- (3.6,-7);
\draw[very thick, red] (4.2,-7) -- (4.6,-7);
\filldraw (3.6,-7) circle (0.05) node[anchor=north] {$x_2$};
\filldraw (4.2,-7) circle (0.05) node[anchor=north] {$z_2$};

\draw[very thick, red] (4.4,-3.8) -- (4.2,-7);

\draw[very thick, red] (4.6,-7) arc [start angle=-180, end angle=0, radius=0.6];
\draw[dashed] (4.6,-7) arc [start angle=180, end angle=0, x radius=0.6, y radius=0.5];

\draw[very thick, red] (5.8,-7) -- (7,-7);

\draw[very thick, red] (7,-7) arc [start angle=-180, end angle=0, radius=0.6];
\draw[dashed] (7,-7) arc [start angle=180, end angle=0, x radius=0.6, y radius=0.5];

\draw[very thick, red] (8.2,-7) arc [start angle=-90, end angle=0, radius=1];
    \end{tikzpicture}
    \caption{A long $x_1$-$x_2$ path using the extra edge $z_1z_2$ (shown in bold, and highlighted if colour is available). Dashed lines indicate parts of the adjuster not in the relevant route.}
    \label{fig:nearly-covering-path}
\end{figure}

Now taking this path together with a corresponding path for $F_2$ and the two paths $P_1,P_2$ between them gives a cycle $C$ of length 
between $\ell_1+\ell_2-2(m_1+m_2)$ and $\ell_1+\ell_2+2t$. We extend this to an adjuster by including all short cycles from $F_1$ 
and $F_2$ such that one of the routes round that short cycle is entirely contained in $C$. Since all but at most $m_1+m_2$ 
edges of a route of $F_i$ are included in the cycle, this means at most $2(m_1+m_2)$ short cycles of $F_1$ or $F_2$ are not included, giving $r\geq r_1+r_2-2(m_1+m_2)$. Finally, note that the section of the adjuster obtained consisting of vertices not in $F_1$ is contiguous, and contributes at least $\ell-\ell_1$ to the overall length. If $r_2=0$ this section has no short cycles, as required.
\end{proof}
This enables us to effectively deal with the case where two of the $A_i$ (or, more precisely, the expanding subgraphs within them) 
are connected by two disjoint paths. 

\begin{proof}[Proof of Theorem~\ref{thm:main2}] 
It suffices to prove, via induction on $k$, the conclusion for $2C\frac{k-1}{k}\le m\le k^{22}$, where $C$ is at least the constants in Lemma~\ref{lem:sn} and Lemma~\ref{lem:stability}. Since we have $C\leq 2C\frac{k-1}{k}\leq 2C$, we can then take $C'=2C$ to conclude. 

Suppose it is not true, and take a counterexample, \ie a graph $G$ of order $(k-1)(n-1)+\sigma(H)$ that is $C_n$-free 
and with $H$-free complement. We apply Lemma~\ref{lem:stability}. If \eqref{stability-i} applies then we have a graph 
$G'\subset G$ of order $(k-2)(n-1)+\sigma(H)$, with $\hat H$-free complement. Note that by definition of $\hat H$ we have 
$\sigma(H)=\sigma(\hat H)$. Also, since $m_2\leq \frac{mk}{k-1}$, we have 
\[m(\hat H)\geq m\frac{k(k-2)}{(k-1)^2}\geq 2C\frac{k-2}{k-1}.\] 
Thus if $\abs{\hat H}\leq (k-1)^{23}$ we have $G'\supset C_n$ by the induction hypothesis, 
whereas otherwise we have $G'\supset C_n$ by Corollary~\ref{PS-cor}. Consequently we must have \eqref{stability-ii}.

Within each set $A_i$ we apply Lemma \ref{lem:expansion} with $G_{\ref*{lem:expansion}}=G[A_i]$, $H_{\ref*{lem:expansion}}=K_{m_1,m_2}$,  $M_{\ref*{lem:expansion}}=\abs{A_i}/((m_1+m_2)\log 2)$ and $\beta_{\ref*{lem:expansion}}=M_{\ref*{lem:expansion}}/100>40$ (note that also $\beta_{\ref*{lem:expansion}} \geq 10 \log k$) to find a $(10,M_{\ref*{lem:expansion}}/100,(m_1+m_2)/2,2)$-expander $H_i$ of order at least $\abs{A_i}-(m_1+m_2)/2\geq \abs{A_i}-m$.

Suppose that $H_i$ and $H_j$ are linked (in $G$) by two disjoint paths for some distinct $i,j\in[k-1]$. Recall that $\abs{A_i}\geq 0.95n$. Applying Lemma~\ref{lem:sn} to $G[A_i]$, we find an $r$-adjuster $F_i$ for $r\geq 9mk/8$ with length between $0.6n$ and $0.7n$. 
Similarly, we may find an adjuster in $A_j$ of length between $0.6n$ and $0.7n$; however, instead of using it as an adjuster we simply take the longest route to find a cycle $C_j$ of that length. 

Note that the size of the adjuster and cycle ensures that they intersect $H_i$ and $H_j$ respectively in at least $\abs{H_i}+0.6n-\abs{A_i}\geq 0.5n$ vertices. Suppose that the disjoint paths between $H_i$ and $H_j$ have endpoints $x_1$ and $x_2$ in $H_i$ and endpoints $y_1$ and $y_2$ in $H_j$.
By Lemma \ref{lem:wings}, we may find disjoint sets $B_1\ni x_1$ and $B_2\ni x_2$ in $V(H_i)$ of size $\abs{A_i}/200$, such that every vertex in $B_1$ is connected to $x_1$ by a path, and likewise for $B_2$ and $x_2$. 
We may therefore use Lemma \ref{lem:short-path} with $A_{\ref*{lem:short-path}}=V(F_i)\cap V(H_i),B_{\ref*{lem:short-path}}=B_1\cup B_2,C_{\ref*{lem:short-path}}=\varnothing$ to give a path from  $V(F_i)$ to $B_1$ or $B_2$, without loss of generality the former.
Extend it to a path from $V(F_1)$ to $x_1$ using vertices from $B_1$, and remove vertices from this path, if necessary, to obtain a shortest path $P_1$. This ensures that $\abs{V(P_1)}\leq m_1+m_2+1$ since $\overline{G[A_i]}$ is $K_{m_1,m_2}$-free; note also that $P_1$ is disjoint from $B_2$.
Next apply Lemma \ref{lem:short-path} with $A_{\ref*{lem:short-path}}=(V(F_i)\cap V(H_i))\setminus V(P_1),B_{\ref*{lem:short-path}}=B_2,C_{\ref*{lem:short-path}}=V(P_1)$ to give a disjoint $x_2$-$V(F_i)$ path $P_2$.
We can similarly find two disjoint paths from $y_1$, $y_2$ to $F_j$ in $H_j$, and the union of these with the existing paths between $H_i$ and $H_j$ give two disjoint paths between the adjuster and the cycle.
Since $\overline{G}$ is $H$-free, each of these paths contains a shortest path of length at most $mk+k$. We apply Lemma~\ref{lem:merge-efficient} to merge the adjuster and cycle using these shortest paths, obtaining
an adjuster with at least $9mk/8-2(m_1+m_2)\geq mk+k$ short cycles, of total length $\ell$ between $1.2n-2(m_1+m_2)> 1.19n$ and $1.4n+2(mk+k)<1.41n$. Recall that $F_i$ has length at most $0.7n$, so the adjuster obtained from merging $F_i$ and $C_j$ contains a section of length at least $\ell-0.7n\ge 0.49n>\ell-n$ without short cycles (the part corresponding to $C_j$). We may reduce this section until the total 
length is between $n$ and $n+mk+k$, since $\overline G$ is $H$-free, and then use the reducing power of the adjuster to obtain a cycle of length exactly $n$.

Thus we may assume that no pair $H_i,H_j$ is linked by two disjoint paths. It follows that for some index $i$ there is 
a vertex $v$ which separates $H_i$ from all other $H_j$. Let $A$ be the component of $G-v$ containing $H_i-v$ 
and let $G'=G[A\cup\{v\}]$. Note that $\overline{G'}$ is $K_{m_1+1,m_2+1}$-free, since otherwise we may choose $m_k$
vertices from $H_j-v$ for each $j\neq i$ together with the vertices of a $K_{m_1,m_2}$ in $\overline{G'-v}$ 
to give a graph containing $H$ in $\overline{G}$. Recall that \eqref{stability-i} of Lemma~\ref{lem:stability} does not apply.\vspace{0.5mm}
It follows that any set $B\subset A$ of size $m_2$ has $\abs{N_G(B)\cup B}\geq n$, since otherwise $\overline{G-(N_G(B)\cup B)}$ is not $\hat{H}$-free, implying that $\overline G$ is not $H$-free. Since also $N_{G'}(B)\cup B=N_G(B)\cup B$, we have $\abs{N_{G'}(B)\cup B}\geq n$. 
Thus, for any set $B'\subset V(G')$ of order at least $m_2+1$, we have $\abs{N_{G'}(B')\cup B'}\geq n$ (since $B'$ contains a 
set of order $m_2$ not including $v$). Now, using Lemma~\ref{lem:long-cycle} with $G_{\ref*{lem:long-cycle}}=H_i$ and $H_{\ref*{lem:long-cycle}}=K_{m_2,m_2}$, we may find a cycle of length at least $10m_2$ 
in $H_i$. Taking $x,y$ to be two consecutive vertices on this cycle, Lemma \ref{PS-lemma} applies to $G_{\ref*{PS-lemma}}=G'$ with $m_{\ref*{PS-lemma}}=m_2+1$,
and we may find a path of order exactly $n$ between $x$ and $y$, giving a copy of $C_n$.
\end{proof}

\section*{Acknowledgement}
The authors are grateful to the referee for their careful review.

\end{document}